\begin{document}
\title[On the equatorial Dehn twist of a Lagrangian nodal sphere]{On the equatorial Dehn twist of a Lagrangian nodal sphere}
\author{Umut Varolgunes}
\begin{abstract}
Let $(M^4,\omega)$ be a geometrically bounded symplectic manifold, and $L\subset M$ a Lagrangian nodal sphere such that $\omega\mid_{\pi_2(M,L)}=0$. We show that an equatorial Dehn twist of $L$ does not extend to a Hamiltonian diffeomorphism of $M$. We also confirm a mirror symmetry prediction about the action of a symplectomorphism extending an equatorial Dehn twist on the Floer theory of the nodal sphere. We present analogues of the equatorial Dehn twist for more singular Lagrangians, and make concrete conjectures about them.
\end{abstract}
\maketitle

\section{Introduction}

Let $(M,\omega)$ be a four dimensional symplectic manifold. We assume that $M$ is geometrically bounded (e.g. closed, completion of a domain with contact boundary, $\ldots$) in the sense of \cite{Gromanopen}. This assumption is necessary for being able to do Floer theory on $M$. We call $L\subset M$ a \textbf{Lagrangian nodal sphere} if it is an immersed Lagrangian $S^2$ with a single transverse self-intersection point. We make the assumption $\omega\mid_{\pi_2(M,L)}=0$ throughout the paper. We call the self-intersection point of a nodal sphere its double point.

Let us call a map $L\to L$ an \textbf{equatorial Dehn twist} if it can be obtained by doing a Dehn twist around a simple closed curve in $L$ which does not pass through the double point (see Figure 1). 

\begin{theorem}\label{thm1}
An equatorial Dehn twist on L does not extend to a Hamiltonian diffeomorphism of M.
\end{theorem}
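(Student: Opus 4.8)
My plan is to use Floer theory of the nodal Lagrangian $L$ together with the fact that an equatorial Dehn twist acts nontrivially on this Floer theory, while a Hamiltonian diffeomorphism must act trivially.

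First I would set up the self-Floer theory of the immersed Lagrangian $L$. Because $L$ is a nodal sphere, its self-intersection point contributes two generators (often denoted by the two branches of the node), and the homotopy assumption $\omega|_{\pi_2(M,L)}=0$ guarantees convergence of the relevant moduli counts and that all holomorphic discs and strips have nonnegative energy with energy zero only for constants, so the Floer differential and the Floer cohomology $HF(L,L)$ are well-defined in the geometrically bounded setting of Groman. The key structural input is that, for a nodal sphere, the Floer cohomology (or at least the portion detected by the two node generators, with an appropriate choice of bounding cochain / local system if needed) is nonzero and carries extra structure — in particular there is a well-defined unit, or a distinguished class coming from the node, that any self-Floer quasi-isomorphism must respect.

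The core of the argument is a computation showing that an equatorial Dehn twist $\tau$ acts nontrivially on $HF(L,L)$, whereas a Hamiltonian diffeomorphism $\phi$ of $M$ restricting to a diffeomorphism of $L$ induces the identity on $HF(L,L)$ (indeed on the whole Floer $A_\infty$-structure, up to the canonical quasi-isomorphism), since Hamiltonian isotopies give continuation maps that are quasi-isomorphisms equal to the identity on cohomology. To see that $\tau$ acts nontrivially, I would localize near the equatorial curve: a Dehn twist around a simple closed curve $\gamma\subset L$ not through the double point can be detected by its effect on the relative fundamental class or on the degree-one part of $HF(L,L)$ — concretely, $\tau$ changes a generator by adding a multiple of the class Poincaré dual to $\gamma$, and this is visible through the product or the module structure over $H^*(L)$. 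The cleanest route is to compare the two branches of the node: the Dehn twist alters the "framing" that identifies the inputs and outputs at the double point, so the induced map on the two-dimensional piece spanned by the node generators is given by a nontrivial unipotent (or otherwise nonidentity) matrix, distinguishing $\tau$ from any Hamiltonian-induced map.

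The main obstacle I expect is establishing the two facts that make the contradiction work: (i) that $HF(L,L)$ is nonzero — equivalently, that $L$ is unobstructed (possibly after turning on a bounding cochain, as is typical for Lagrangian spheres with a single transverse double point, where the disc bubbling off the node can be cancelled) — and computing it precisely enough to identify the $H^*(L)$-module or ring structure; and (ii) pinning down the induced action of $\tau$ on this structure and checking it is not the identity. Bubbling analysis at the node, transversality for the relevant low-dimensional moduli spaces, and the naturality of continuation maps under diffeomorphisms extending to $M$ are the technical points to nail down; once those are in place, the incompatibility of the nontrivial $\tau$-action with the necessarily trivial $\phi$-action gives the theorem.
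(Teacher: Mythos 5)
Your high-level plan --- compare the Floer-theoretic action of a symplectomorphism extending the Dehn twist with the (necessarily trivial) action of a Hamiltonian diffeomorphism --- is indeed the strategy of the paper's second proof. However, the central claim you hang the argument on is false. You assert that the equatorial Dehn twist $\tau$ acts nontrivially on $HF(L,L)$, detected either by ``adding a multiple of the class Poincar\'e dual to $\gamma$'' or by a ``nontrivial unipotent matrix on the node generators.'' Neither happens. The equatorial curve $\gamma$ is nullhomotopic in $S^2$, so its Poincar\'e dual is zero; the Dehn twist is supported away from the node, so it fixes the node generators; and the mapping class group of $S^2$ is trivial, so $\tau$ acts as the identity on $H^*(S^2)$. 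Consistent with this, the paper proves (Theorem~\ref{thm2}, and the corollary following Theorem~\ref{thmimm}) that $\Phi_*$ acts as the \emph{identity} on $HF(L,L)\cong\Lambda^*\mathbb{C}^2$. The nontriviality you need is genuinely invisible at the cohomology level. It appears only at the $A_\infty$-level: $\Phi_*$ is not homotopic to the identity as an $A_\infty$-automorphism even though its first-order (cohomological) part is the identity. Extracting this requires something like the paper's formal-automorphism invariant (Theorem~\ref{thm3}), not a cohomology computation; so as written your argument has a gap that cannot be filled by sharpening the $HF$ computation.

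You should also be aware that the paper's \emph{first} proof of Theorem~\ref{thm1} takes a completely different and more elementary route: it Hamiltonian-isotopes the putative extension $\Phi$ so that it is the identity near the double point, performs Polterovich surgery at the node to replace $L$ by an embedded Lagrangian torus $T$ (still satisfying $\omega|_{\pi_2(M,T)}=0$), and then observes that the Dehn twist becomes a shear on $T$, acting nontrivially on $H^1(T,\mathbb{Z}_2)$. This contradicts the Lalonde--Hu--Leclercq theorem that a Hamiltonian diffeomorphism extending a diffeomorphism of an aspherical embedded Lagrangian must act trivially on $\mathbb{Z}_2$-cohomology. That route succeeds precisely because the surgery replaces the sphere (trivial $H^1$) by a torus (nontrivial $H^1$), so a classical cohomological obstruction becomes available --- whereas for the nodal sphere itself no such cohomological obstruction exists, which is exactly where your proposal breaks down.
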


We discuss two proofs of this theorem. The first one uses Polterovich's Lagrangian surgery \cite{Polterovichsurgery} to reduce Theorem \ref{thm1} to a similar result of Lalonde-Hu-Leclercq \cite{LalondeHu} for embedded Lagrangian submanifolds (see Theorem \ref{thmhu} in Section \ref{s1} for the complete statement). We note that the proof of their result utilizes Lagrangian Floer theory.

In the second approach, we analyze the action of a symplectomorphism $\Phi:M\to M$ that extends an equatorial Dehn twist on the Floer theory of the Lagrangian nodal sphere \cite{Akahoimmersed}. We stress here that this proof only works when $L$ represents a homology class whose self-intersection is zero, which is equivalent to the two branches at the double point having positive intersection in the sense of Whitney \cite{Whitney}. There are good reasons for this, see Corollary \ref{corklein}, Remark \ref{rmkklein}, and Remark \ref{rmklast}.

Let $A_{\infty}$-algebra $\mathcal{A}$ be a chain level model of the Floer algebra $CF(L,L)$ with  $\mathbb{C}$-coefficients. It is known that $\mathcal{A}$ is formal and that its cohomology is isomorphic to $\Lambda:=\Lambda^*(\mathbb{C}^2)$ \cite{Paulcategorical}.

The symplectomorphism $\Phi$ induces an $A_{\infty}$-quasi-isomorphism $\Phi_*:\mathcal{A}\to\mathcal{A}$, which is well defined only up to  homotopy conjugation (see Subsection \ref{ssact} for precise definitions and more details). The following statement is independent of this ambiguity.
\begin{theorem}\label{thm2} $\Phi_*$ acts as identity on $H(\mathcal{A})$, but it is not homotopic to the identity as an $A_{\infty}-$automorphism.
\end{theorem}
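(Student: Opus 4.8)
The plan is to establish the two assertions separately: the first is elementary, while the second rests on a Hochschild\nobreakdash-cohomological obstruction together with one genuinely geometric computation.

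\emph{$\Phi_*$ is the identity on $H(\mathcal A)$.} Recall that $H(\mathcal A)\cong\Lambda=\Lambda^*(\mathbb C^2)$, with $H^0$ spanned by the unit, $H^2$ by the ``point'' class, and $H^1\cong\mathbb C^2$ spanned by the two generators $e_1,e_2$ of $CF(L,L)$ attached to the double point (the two orderings of the local branches). Since an equatorial Dehn twist is the identity away from an annulus disjoint from the double point, one may fix an extension $\Phi$ which, using a Weinstein neighbourhood of that annulus, coincides with the identity outside this neighbourhood; in particular $\Phi$ is the identity near the double point, so $\Phi_*$ fixes $e_1$ and $e_2$. As $\Lambda^*(\mathbb C^2)$ is generated as an algebra in degree $1$ and $\Phi_*$ is a ring automorphism of $H(\mathcal A)$, it must be the identity.

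\emph{Reduction to a nonzero Hochschild class.} By formality we take $\mathcal A=\Lambda$ with $m_1=0$ and $m_{\ge3}=0$. Then $\Phi_*$ is represented by an $A_\infty$-automorphism $f=(f^1,f^2,f^3,\dots)$, and the previous paragraph (together with $m_1=0$) lets us normalise $f^1=\mathrm{id}_\Lambda$. The $A_\infty$-morphism relations then read $\delta f^2=0$ and $\delta f^k=Q_k(f^2,\dots,f^{k-1})$ for $k\ge3$, where $\delta$ is the Hochschild differential on $CC^*(\Lambda,\Lambda)$ and $Q_k$ a universal quadratic expression; moreover an $A_\infty$-homotopy with trivial linear term changes $f^2$ by a Hochschild coboundary. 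Consequently, were $f$ homotopic to $\mathrm{id}$, it could be homotoped in turn to have $f^2=f^3=\cdots=0$, so each successive leading class $[f^k]\in HH^k(\Lambda,\Lambda)$, of internal degree $1-k$ and well defined once $f^{<k}$ has been put to zero, would vanish --- and this conclusion is unaffected by the homotopy\nobreakdash-conjugation ambiguity, since both homotopy\nobreakdash-triviality and vanishing of $[f^k]$ are preserved by it. Now $\Lambda^*(\mathbb C^2)$ is Koszul and graded\nobreakdash-commutative, so the differential induced on $\mathrm{Hom}_{\Lambda\otimes\Lambda^{\mathrm{op}}}(\,\cdot\,,\Lambda)$ from its Koszul bimodule resolution is given by graded commutators with the generators and hence vanishes; this yields
\[
HH^p\big(\Lambda^*(\mathbb C^2)\big)\ \cong\ \mathrm{Sym}^p\big((\mathbb C^2)^*\big)\otimes\Lambda^*(\mathbb C^2),
\]
with the $\mathrm{Sym}^p$-factor in internal degree $-p$ and the $\Lambda^q$-factor in internal degree $q$, so that $HH^k(\Lambda)_{1-k}\cong\mathrm{Sym}^k\big((\mathbb C^2)^*\big)\otimes\mathbb C^2\ne0$ for every $k\ge2$. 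Thus it suffices to prove that, at the first $k\ge2$ at which $f$ is not homotopically trivial, the class $[f^k]$ is nonzero.

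\emph{The geometric computation, and the main obstacle.} This last step is the crux. In the immersed Floer theory of \cite{Akahoimmersed}, $\Phi_*$ is the comparison map between the standard model of $CF(L,L)$ and its $\Phi$-pushforward; since $\Phi$ was arranged to differ from the identity only near the equatorial annulus, $f$ differs from $\mathrm{id}$ only through rigid pseudo\nobreakdash-holomorphic polygons meeting that region, and these form a finite explicit list. Computing $f^2$ from this list and verifying, by a finite linear\nobreakdash-algebra computation in $CC^2(\Lambda,\Lambda)_{-1}$, that it is not a Hochschild coboundary gives $[f^2]\ne0$ (if by chance $f^2$ is exact one continues to $[f^3]\in HH^3(\Lambda)_{-2}$, and so on). This matches --- and is the precise meaning of confirming --- the mirror\nobreakdash-symmetry prediction: under $L\leftrightarrow\mathcal O_0$ and $\mathrm{End}(L)\simeq\mathrm{Ext}^*_{\mathbb C^2}(\mathcal O_0,\mathcal O_0)\cong\Lambda^*(T_0\mathbb C^2)$, the equatorial Dehn twist corresponds to a nontrivial autoequivalence fixing $\mathcal O_0$ --- pullback by a unipotent automorphism of a neighbourhood of the point --- whose action on $\mathrm{Ext}^*(\mathcal O_0,\mathcal O_0)$ is the identity on cohomology but whose leading $A_\infty$-obstruction is the nonzero leading Taylor coefficient of that automorphism, a polynomial vector field vanishing at the origin, under the identification of $\bigoplus_k HH^k(\Lambda)_{1-k}$ above with such vector fields on $\mathbb C^2$. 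The hard part is exactly this: pinning down the rigid polygons (equivalently, importing the relevant case of homological mirror symmetry) and carrying out the coboundary check; the remainder is formal. Granting it, the promised second proof of Theorem~\ref{thm1} follows at once --- a Hamiltonian diffeomorphism is the time\nobreakdash-one map of a Hamiltonian isotopy, whose continuation maps provide an $A_\infty$-homotopy from $\Phi_*$ to the identity, contradicting the theorem just proved.
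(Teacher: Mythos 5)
The first assertion is handled adequately (the paper does the same thing more concretely, exhibiting cocycles in the Abouzaid model $\mathcal{A}_L$ that are strictly fixed by $(\phi^{-1})^*$). Your framework for the second assertion --- normalize $f^1=\mathrm{id}$, observe that the leading nonvanishing term $f^k$ is a Hochschild cocycle whose class obstructs homotopy-triviality, and use HKR to see that the target group is nonzero --- is sound, and in fact coincides with the alternative proof sketched in a remark at the end of Section 4 of the paper. But your proof stops exactly where the content begins: you never exhibit a nonzero obstruction class. You defer "computing $f^2$ from this list and verifying... that it is not a Hochschild coboundary" and hedge with "if by chance $f^2$ is exact one continues to $[f^3]$." In fact $f^2$ \emph{is} exact --- the paper's computation gives $Tr_2=0$ and the first nontrivial term is $Tr_3$, consistent with the formal automorphism $(pe^{pq},qe^{-pq})$ being the identity to second order --- so the entire burden falls on $[f^3]$, which you do not compute. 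Without producing some nonvanishing $[f^k]$, nothing excludes the possibility that $\Phi_*$ is homotopic to the identity, so the theorem is not proved.

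The route you propose for supplying that computation is also not viable as stated. Enumerating "rigid pseudo-holomorphic polygons meeting the twist region" for an immersed nodal sphere is not a finite explicit list one can write down, and "importing the relevant case of homological mirror symmetry" is precisely what the paper declines to do (its results are proved independently of the mirror prediction, which is then \emph{confirmed}). What actually makes the computation possible is the naturality statement, Theorem \ref{thmimm}: the Floer-theoretic action is transferred to the topological de Rham model $\mathcal{A}_L$ of \cite{Abouzaidtopological}, where the equatorial Dehn twist acts by honest pullback of forms. There the nontriviality is detected completely explicitly --- via the pushforward of Maurer--Cartan elements and a cocycle $e^{pq\rho}$ built from a function $\tilde\rho$ with $d\tilde\rho=(\phi^{-1})^*\tilde\xi-\tilde\xi$ and $\tilde\rho(n)-\tilde\rho(s)=1$; this last integral (a winding-number computation) is the single place the Dehn twist's nontriviality enters, and it is what yields the exponential in Theorem \ref{thm3} and hence the failure of homotopy-triviality. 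Your proposal contains no substitute for this step.
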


Note that Theorem \ref{thm2} implies Theorem \ref{thm1} using a straightforward generalization of Section 10c of \cite{Paulbook} to the immersed Floer theory (also see \cite{BaoAlstonimmersed}), showing that such $\Phi$ cannot be Hamiltonian isotopic to identity. We in fact have a more precise result concerning the second part of Theorem 2, which we state only in a sketchy way. For a more detailed version, see Subsection \ref{ssMC}. 

Let $FAut_0(\mathbb{C}^2)$ be the group of \textbf{formal automorphisms} of the plane $\mathbb{C}^2$, i.e. $<p,q>$-adically continuous $\mathbb{C}$-algebra isomorphisms $\mathbb{C}[[p,q]]\to \mathbb{C}[[p,q]]$. The reader can think of these as the isomorphisms that fix the ideal $<p,q>$ and are uniquely determined by the images of $p$ and $q$. We will see that we can define a group homomorphism:
\begin{equation}\label{symmetrization}
Aut_{A_{\infty}}(\Lambda)\to FAut_0(\mathbb{C}^2)
\end{equation}

The crucial point about the map (\ref{symmetrization}) is that it sends $A_{\infty}$-homotopic $A_{\infty}$-automorphisms to the same formal automorphism. Using the formality of $\mathcal{A}$, we can transfer $\Phi_*:\mathcal{A}_L\to\mathcal{A}_L$ to an $A_{\infty}$-automorphism $\Lambda\to\Lambda$, which is well defined up to homotopy and conjugacy. This way, we obtain a well defined conjugacy class in $FAut_0(\mathbb{C}^2)$. The following computation also implies the second part of Theorem \ref{thm2}.

\begin{theorem}\label{thm3} A representative of the conjugacy class associated to $\Phi_*$ is given by 
\begin{equation}\label{exponential}
\psi(p,q)=(pe^{pq},qe^{-pq}),
\end{equation}
where the exponential here is defined as the usual (formal) Taylor series expansion $e^s=1+s+\frac{s^2}{2}+\ldots$. 
\end{theorem}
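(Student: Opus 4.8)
The plan is to compute the $A_\infty$-automorphism $\Phi_*$ on the formal model $\Lambda = \Lambda^*(\mathbb C^2)$ explicitly enough to extract its image under the symmetrization map \eqref{symmetrization}, and then identify that image with $\psi$. I expect the proof to have three movements: (i) pin down what $\Phi_*$ does to generators and which structure maps (if any) it can move, using the geometry of the equatorial Dehn twist and the known Floer algebra of the nodal sphere; (ii) run the formal-automorphism machinery of \eqref{symmetrization}, i.e. interpret the deformed $A_\infty$-structure as a Maurer--Cartan-type datum on $\mathbb C[[p,q]]$ and read off the induced change of coordinates; and (iii) recognize the answer as the time-one flow of an explicit Hamiltonian vector field, which will be what produces the exponentials $pe^{pq}$, $qe^{-pq}$.

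**Key steps.** First I would set up coordinates: write $\Lambda = \mathbb C\langle 1, x, y, xy\rangle$ with $x,y$ the two degree-one generators dual to the two local branches at the double point, and $xy$ the top class; the Floer product is the exterior one. The symmetrization map of \eqref{symmetrization} should be built by taking an $A_\infty$-automorphism $F$ of $\Lambda$, forming its "generating function" on the dual formal disk $\mathrm{Spec}\,\mathbb C[[p,q]]$ (here $p,q$ are the coordinates dual to $x,y$), and recording the induced automorphism of this formal scheme; the homotopy-invariance claim in the excerpt is exactly what makes this well defined on conjugacy classes. The geometric input is that $\Phi$ is a Dehn twist supported away from the node, so on Floer chains it acts by a map whose leading term is the identity but which, because the vanishing cycle of the twist is an equator linking both branches symmetrically, introduces a correction governed by the count of the new holomorphic strips/polygons created by the twist. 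I would argue (using the surgery picture from the first proof, or directly Akaho–Joyce immersed Floer theory as cited) that this correction is controlled by a single generating Hamiltonian $H = pq$ on the formal plane — the product of the two branch coordinates — with the two branches acquiring opposite signs because their Whitney intersection is $+1$ and the twist rotates them oppositely. That sign asymmetry is precisely the $e^{pq}$ versus $e^{-pq}$.

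**Carrying it out.** Concretely: identify the infinitesimal generator. The vector field $X_H$ for $H = pq$ with the standard symplectic/Poisson structure $\{p,q\}=1$ is $X_H = p\,\partial_p - q\,\partial_q$, the Euler-type field whose time-$t$ flow is $(p,q)\mapsto (e^t p, e^{-t}q)$. But $\Phi_*$ is not literally this linear flow; the Dehn twist contributes higher-order terms because each pass of a strip across the twist region multiplies by another factor of the holonomy, and these assemble into the \emph{Taylor series} of the exponential with argument $pq$ rather than a scalar — i.e. the flow is of the formal vector field $pq\cdot X_{\text{(rescaling)}}$, equivalently $(p,q)\mapsto(pe^{pq}, qe^{-pq})$. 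I would verify this by checking it is an automorphism of $\mathbb C[[p,q]]$ fixing $\langle p,q\rangle$ (clear, since $pe^{pq} = p + \text{h.o.t.}$), that it is not the identity (the $p^2q$ coefficient of $pe^{pq}$ is nonzero), and that it matches the first nontrivial structure-map deformation of $\mathcal A$ coming from the twist — this last compatibility check, matched against the explicit strip counts, is where the factor arises and where I must be careful about signs and combinatorial factors $1/n!$.

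**Main obstacle.** The hard part will be step (i)–(iii) glued at the seam: rigorously relating the \emph{geometric} action of $\Phi$ on immersed Floer chains to the \emph{algebraic} deformation on $\Lambda$, and then to the \emph{formal} automorphism, keeping every sign, orientation, and combinatorial coefficient. In particular I must show the higher-order terms beyond the linear $p\partial_p - q\partial_q$ really exponentiate with argument $pq$ (and not, say, truncate, or appear with different coefficients) — this is a count of holomorphic polygons for the twisted Lagrangian, best organized via the surgery-to-Lefschetz-fibration model where the twist is a model Dehn twist and the strip counts are the classical ones; the $1/n!$ in the Taylor series should drop out of the symmetrization (Maurer--Cartan) normalization rather than from the geometry, and getting that bookkeeping exactly right is the crux. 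A secondary subtlety is confirming the conjugacy-class statement: I only need \emph{one} representative, so I am free to choose the homotopy and transfer data that make the generating function cleanest, but I should note why no choice can remove the $pq$-dependence (which would contradict Theorem~\ref{thm2}).
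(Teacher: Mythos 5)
Your general framework---Maurer--Cartan elements on $\mathbb{C}[[p,q]]$, homotopy-invariance of the symmetrization map, the freedom to choose one convenient representative of the conjugacy class---matches the paper's setup in Subsection \ref{ssMC}. But the engine you propose for actually producing the exponentials, namely a count of ``new holomorphic strips/polygons created by the twist'' organized through a surgery/Lefschetz model, is not carried out and is not a viable route as stated. The action $\Phi_*$ on $CF(L,L)$ is only defined up to homotopy conjugation via equivariant choices of perturbation data; there is no canonical enumeration of ``polygons created by the twist,'' and extracting the higher-order Taylor coefficients of $\Phi_*$ from holomorphic curve counts is exactly the moduli problem the paper is structured to avoid. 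The paper instead uses Theorem \ref{thmimm} to replace $\Phi_*$ by the pullback $(\phi^{-1})^*$ on the de Rham Abouzaid model $\mathcal{A}_L$, after which no holomorphic curves appear at all: one writes an explicit formality map $G:\Lambda\to\mathcal{A}_L$ using forms $x,y,\xi$ with $d\xi=-xy-yx$, guesses $\beta=pe^{pq}a+qe^{-pq}b$, and verifies $Tr_*\alpha=\beta$ by exhibiting a single degree-zero cocycle $e^{pq\rho}$ in $hom(G_*\beta,((\phi^{-1})^*G)_*\alpha)$ via Lemma \ref{equalitycrit}.

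The sole geometric input---the origin of the exponent $pq$ and of the opposite signs---is an elementary Stokes-theorem computation: $(\phi^{-1})^*\tilde\xi-\tilde\xi$ is exact with a primitive $\tilde\rho$ satisfying $\tilde\rho(n)-\tilde\rho(s)=1$, and the asymmetry between $e^{pq}$ and $e^{-pq}$ comes from the relations $\rho x=x$, $x\rho=0$, $y\rho=y$, $\rho y=0$ at the two branches of the double point, not from the two branches ``rotating oppositely.'' Likewise the $1/n!$ is just the Taylor coefficient of the formal exponential $e^{pq\rho}$ of a degree-zero element, not a Maurer--Cartan normalization artifact. Without either (a) the naturality statement letting you work in the topological model, or (b) an actual organized polygon count with compactness, orientations, and signs under control, your step (i) has no content; the seam you yourself flag as the ``main obstacle'' is precisely where the proof is missing.
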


\begin{figure}
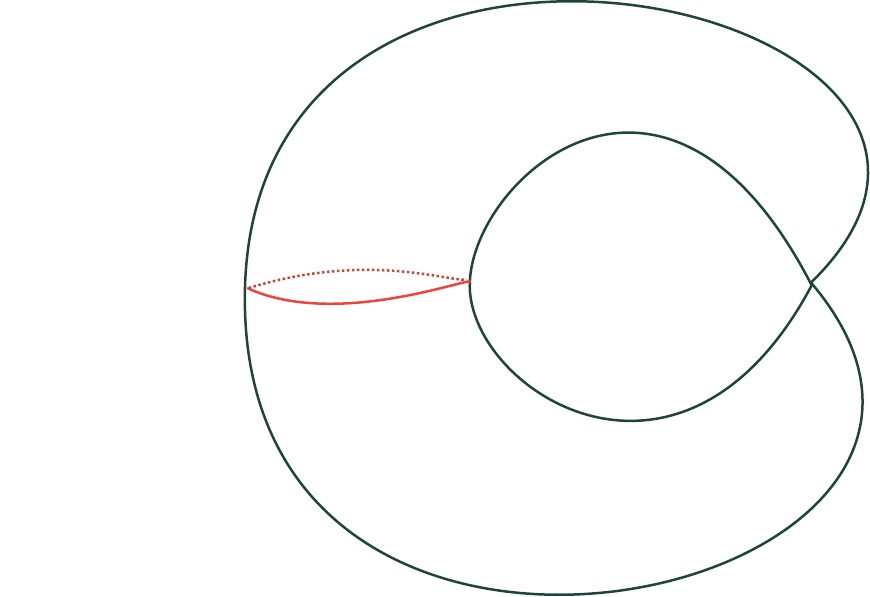
\caption{An equatorial Dehn twist. The curve we are twisting along is in red. The twist is supported in between the two black curves, and it maps the blue curve to the green one in there.}
\end{figure}

At this point an example of such $(M,L)$ pair is overdue. Consider the affine variety $\{(xy-1)z=1\}\subset \mathbb{C}^3$ with the standard symplectic structure induced from the embedding. Then the set $\abs{xy-1}=1, \abs{x}=\abs{y}$ describes a Lagrangian nodal sphere \cite{Paulcategorical}. It is easy to see that our theorems all apply in this case. For generalizations of this example see \cite{Alstonan}; and \cite{BaoAlstonimmersed} for why our theorems would apply for them even though the asphericity condition is not satisfied.

\subsection{Motivation}

The starting point for this study was a mirror symmetry conjecture.

Any Lagrangian nodal sphere with positive double point has a standard Weinstein neighborhood. This is a Weinstein domain with a Lagrangian nodal sphere as its Lagrangian skeleton. Let us call this manifold $W$, and the skeleton $Z$.

It has been long known in the mirror symmetry community that the mirror of $W$ is the complex manifold $\mathbb{C}^2-\{xy=1\}$. There are different constructions leading to it, and also non-trivial checks of the claim especially from a homological mirror symmetry viewpoint (see \cite{Paulcategorical} and the references therein, and also \cite{Shendecluster}). As an important point we mention that $L$ is mirror to the skyscraper sheaf at the origin in this conjectural story.

It has been conjectured that the standard lift of an equatorial Dehn twist of $L$ to a symplectomorphism of $W$ should be mirror to an actual complex automorphism $\psi$ of $\mathbb{C}^2-\{xy=1\}$, and that $\psi$ should preserve (setwise) the two cluster charts of $\mathbb{C}^2-\{xy=1\}$. A further investigation shows that there is not that many options for $\psi$, and the most likely one is \begin{equation}\label{cluster}
\psi(x,y)=(x(1-xy),y(1-xy)^{-1}).
\end{equation}

The final piece is that, invoking homological mirror symmetry, the formal automorphism that was mentioned in Theorem 3 then is supposed to be (see Remark \ref{rmksegal} for details) mirror to the action of $\psi$ on the formal neighborhood of the origin, which is simply given by:
\begin{equation}\label{formalcluster}
\psi(x,y)=(x(1-xy),y(1-xy+(xy)^2+\ldots)).
\end{equation}

To sum up, the precise conjecture is that the formal automorphism of Theorem 3 should be conjugate to this one, in other words, they should be related by a change of variables. It is elementary to see that this is the case (see Remark \ref{rmkmirror}).

We note that the results of this paper will be proven without resorting to any of these mirror symmetry considerations. 

\subsection{Technical details}

Let us start with a situation that is simpler than the one we actually need. Let $X$ be a smooth Spin manifold and $f:X\to X$ a diffeomorphism that preserves the spin structure. $f$ can be lifted to a symplectomorphism $F: T^*X\to T^*X$. It is well-known that there is a PSS type $A_{\infty}$-quasi-isomorphism (\cite{PSS},\cite{Abouzaidtopological}): \begin{equation*}
CF(X,X)\to\Omega_{dR}(X) , 
\end{equation*}
where by $X$ we also denote the zero section inside $T^*X$.
This isomorphism is defined in a canonical way, but the fact that the diagram \begin{align}\label{naturality}
\xymatrix{
 CF(X,X) \ar[d]^{F_*}\ar[r] &\Omega_{dR}(X) \ar[d]^{(f^{-1})^*}\\
CF(X,X) \ar[r] &\Omega_{dR}(X)}
\end{align} 
commutes up to homotopy, requires proof.

When $L$ is a Lagrangian nodal sphere in $(M^4,\omega)$ satisfying $\omega\mid_{\pi_2(M,L)}=0$, there exists a similar deRham type dga $\mathcal{A}_L$, called the Abouzaid model, and an $A_{\infty}$-quasi-isomorphism (see \cite{Abouzaidtopological}, and also \cite{Paulcategorical} for the version that we will be using):
\begin{equation*}
CF(L,L)\to \mathcal{A}_L.
\end{equation*}

An equatorial Dehn twist of $L$ acts on $\mathcal{A}_L$ by a version of pullback of differential forms. In the case where there exists a symplectomorphism $\Phi: M\to M$ extending the equatorial Dehn twist, there is the obvious analogue of diagram (\ref{naturality}) (see diagram (\ref{immersednaturality}) in Subsection \ref{ssact}), which again is expected to commute up to homotopy. 

Strictly speaking, we do not prove these naturality statements in this paper. We instead show that there exists a non-explicit $A_{\infty}$-quasi-isomorphism $CF(X,X)\to\Omega_{dR}(X)$ for which the diagram (\ref{naturality}) commutes up to homotopy. The philosophy here is the one of categorical actions as described in the Section 10b of \cite{Paulbook}. See Subsection 3.2 and the Appendix for more details.

There is another detail that we should mention here. In \cite{Abouzaidtopological}, in addition to the asphericity assumption, the Lagrangians were assumed to satisfy an exactness assumption. This was used to show that the computation of the Fukaya category can be done locally in a Weinstein neighborhood, by means of the integrated maximum principle (Lemma 7.3 in \cite{Paulbook}). Exactness assumption is in fact not necessary for the results of \cite{Abouzaidtopological} to hold, and locality can be achieved by either using virtual techniques \cite{FOOO}, or by a careful use of the monotonicity lemma \cite{Jcurves}. Both of these methods are well-known to the experts. We omit them in this paper. The skeptical reader can assume that, starting from Section 3, $M$ is exact, $\omega=d\theta$, and $L$ is strongly exact in the sense that $\theta$ vanishes on $H_1(L,\mathbb{R})$. Note that these exactness conditions hold for the example given after Theorem \ref{thm3}.

\subsection{Outline of the paper} In Section 2, we give the first proof of Theorem 1. We also give a neat corollary of Theorem 1 showing that the ``germ" of the aforementioned symplectomorphism of $W$ is exotic. We end the section with a conjectural generalization of this corollary to the ``Weinstein neighborhoods" of more general singular Lagrangians. In Section 3, we summarize the Floer theoretic aspects of the computation regarding Theorem \ref{thm3}, most importantly we introduce the Abouzaid model. We translate everything to a purely algebraic problem. In Section 4, first we give a precise version of Theorem \ref{thm3}, and then finish the computation. This requires some algebraic machinery, which is introduced in the most elementary way possible. In the Appendix, we sketch an approach to the technical Floer theoretical details we promised before.

We note that Section 2, and the portion that comes after it, are more or less independent of each other. Section 2 is elementary, but starting from Section 3 the reader is assumed to know the  basics of $A_{\infty}$-algebras (\cite{Kellerintro} would be enough), and also immersed Lagrangian Floer theory (since we always work in the simplest setting, \cite{Paulbook} along with \cite{Akahoimmersed} should be enough). The Lecture 11 of \cite{Paulcategorical} is a precursor to the second portion of the paper, and in particular all the references to \cite{Paulcategorical} are to that lecture unless otherwise stated. For the Appendix, the reader is assumed to be familiar with \cite{Abouzaidtopological} in addition to all this.

Here we note that all our $A_{\infty}$-algebras, maps etc. are $c$-unital. Another note is that when we want to think of a dga as an $A_{\infty}$-algebra, we define the structure maps by: $\mu_1(a)=(-1)^{\abs{a}}da,\mu_2(b,a)=(-1)^{\abs{a}}ba$, and $\mu_n=0, n>2$. This is necessary if we want to obey the sign conventions of \cite{Paulbook}, but also has some geometric content (see Definition 2.1 in \cite{Abouzaidtopological}). Nevertheless, we will always write our formulas in terms of the structures of the dga to avoid further confusion, which might make some signs look unusual.

\subsection{Acknowledgements} My first and foremost thanks go to my advisor Paul Seidel, for suggesting the problem, numerous enlightening discussions, and also reading a preliminary version of this paper, which resulted in many improvements. I thank Mohammed Abouzaid for a very useful discussion about the Appendix. I also thank Emmy Murphy, Roger Casals, Francesco Lin, and Sheel Ganatra for helpful conversations, and to Yanki Lekili and Semon Rezchikov for reading a portion of a preliminary version of the paper. This work was undertaken while the author was a PhD student at MIT. The author was supported by the NSF grants with award numbers 1265196 and 1500954.

\section{The first proof of Theorem 1, some corollaries and conjectures}\label{s1}

Let us first recall some definitions from the introduction. Let $(M,\omega)$ be a symplectic four-manifold. We call a subset $L$ of $M$ a Lagrangian nodal sphere if it is the image of a Lagrangian immersion $S^2\looparrowright M$ which brings two points of the sphere together transversely and is an embedding outside of those two points.  We make the assumption $\omega\mid_{\pi_2(M,L)}=0$. Finally, we call a map $L\to L$ an equatorial Dehn twist if it can be obtained by doing a Dehn twist, supported away from the double point, around a simple closed curve in $L$, which does not pass through the double point.

Before we go into the first proof of Theorem \ref{thm1}, we introduce the main ingredient, which is a theorem of Lalonde-Hu-Leclercq \cite{LalondeHu}. Note that the proof of this theorem uses Lagrangian Floer theory in a very essential way. One can write down a proof using only the formalism of Section 8 of \cite{Paulbook}, modulo some technical details to extend the results from the exact setting to the aspherical setting, but we skip this as most of the arguments end up being repetitions of \cite{LalondeHu}.

\begin{theorem}[Lalonde-Hu-Leclercq]\label{thmhu}
Let $K$ be a Lagrangian submanifold of $(M^{2n},\omega)$, and $\phi:K\to K$ be a diffeomorphism. Assume that $\omega\mid_{\pi_2(M,K)}=0$. If there exists a Hamiltonian diffeomorphism $\Phi: M\to M$ extending $\phi$, then $\phi^*:H^*(K,\mathbb{Z}_2)\to H^*(K,\mathbb{Z}_2)$ is the identity map. If $K$ is orientable and relatively spin, and $\phi$ preserves orientation and at least one relative spin structure, then the same statement with $\mathbb{Z}$-coefficients holds.
\end{theorem}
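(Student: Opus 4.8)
The plan is to deduce the statement from the naturality of Lagrangian Floer cohomology under symplectomorphisms, in the spirit of Sections 8 and 10b of \cite{Paulbook}. Since $\omega|_{\pi_2(M,K)}=0$, the self-Floer cohomology $HF^*(K,K)$ is defined (over $\mathbb{Z}_2$ with no further hypotheses, and over $\mathbb{Z}$ once $K$ is oriented and equipped with a relative spin structure), and the PSS map provides a canonical isomorphism $\mathcal{P}\colon HF^*(K,K)\to H^*(K;R)$, with $R=\mathbb{Z}_2$ or $\mathbb{Z}$ accordingly. Choosing a Hamiltonian perturbation $K'=\psi(K)$ transverse to $K$, we have $HF^*(K,K)=HF^*(K,K')$; since $\Phi(K)=K$, the symplectomorphism $\Phi$ carries intersection points and $J$-holomorphic strips for the pair $(K,K')$ to those for $(K,\Phi(K'))$, giving a chain-level isomorphism whose composition with the continuation isomorphism $HF^*(K,\Phi(K'))\cong HF^*(K,K')$ (valid because $\Phi(K')=(\Phi\psi\Phi^{-1})(K)$ is Hamiltonian isotopic to $K'$) defines an automorphism $\Phi_*$ of $HF^*(K,K)$. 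Over $\mathbb{Z}$ this is the step where the hypothesis that $\phi$ preserves orientation and at least one relative spin structure enters: it is exactly what makes $\Phi$ identify the coherent orientations of the relevant moduli spaces, so that $\Phi_*$ is a well-defined $\mathbb{Z}$-module map.

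Granting this, two things remain. First, under $\mathcal{P}$ the automorphism $\Phi_*$ corresponds to the classical pullback $\phi^*$ (up to a fixed sign convention): this is a naturality property of the PSS construction, because the pseudoholomorphic disk and gradient-flow data defining $\mathcal{P}$ for $K$ is carried by $\Phi$ to the analogous data for $\Phi(K)=K$ transported by $\phi$. Second, $\Phi_*=\mathrm{id}$: writing $\Phi=\Phi_1$ for a Hamiltonian isotopy $\{\Phi_t\}_{t\in[0,1]}$ with $\Phi_0=\mathrm{id}$, the same recipe with $\Phi_t$ in place of $\Phi$ (each $\Phi_t(K)$, $\Phi_t(K')$ being Hamiltonian isotopic to $K$, resp. $K'$, so that all the needed continuation maps exist) produces a family of automorphisms of the fixed finitely generated $R$-module $HF^*(K,K)$ depending continuously on $t$, hence locally constant, hence equal to its value $\mathrm{id}$ at $t=0$. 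Combining the two conclusions yields $\phi^*=\mathrm{id}$, and the $\mathbb{Z}_2$ versus $\mathbb{Z}$ dichotomy is precisely the dichotomy in the availability of $\mathcal{P}$ and of the orientation data for $\Phi_*$.

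The main obstacle will be foundational bookkeeping rather than a new idea: making the $\Phi$-equivariance of the PSS map and of the continuation maps precise enough that the continuity argument of the last step goes through, and, for the integral refinement, verifying that the coherent orientations are preserved exactly under the stated orientation and relative spin hypothesis. A secondary point is that \cite{LalondeHu} is written in the exact setting, so one must additionally control disk and sphere bubbling under the asphericity assumption for the moving-boundary problems appearing above; this is standard, and, as noted in the excerpt, most of the argument ends up recapitulating \cite{LalondeHu}.
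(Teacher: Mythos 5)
The paper does not actually prove this theorem: it is quoted from \cite{LalondeHu}, with the explicit remark that a proof via the formalism of Section 8 of \cite{Paulbook} would work but would essentially repeat the arguments of \cite{LalondeHu}. Your sketch --- PSS isomorphism, naturality of the $\Phi$-induced automorphism of $HF^*(K,K)$ under PSS, triviality of that automorphism via continuation maps along the Hamiltonian isotopy, with the orientation/relative spin hypothesis entering only through coherent orientations --- is precisely that standard strategy, so it is consistent with the treatment the paper points to.
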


\begin{proof}[Proof of Theorem \ref{thm1}] Let us assume that there exists such  $\Phi :M\to M$. We denote the double point of $L$ inside $M$ by $p$.

We will use the following lemma repeatedly. Note that $V$ in the statement below will generally be a common domain of definition of Hamiltonian vector fields defined on different open neighborhoods of $p$, $C$ will be the closure of a smaller neighborhood, and $S$ will be the portion of $L$ lying in $V$.

\begin{lemma}\label{lemcutoff}
Let $F_t$ be a time dependent Hamiltonian vector field defined on a symplectic manifold $V$ that vanishes along a connected subset $S$ for all times. Let $C$ be any compact subset of $V$. Then, we can define a new time dependent Hamiltonian vector field $F'_t$ on $V$, which vanishes outside of an arbitrarily small open neighborhood of $C$, agrees with $F_t$ along $C$, and also still vanishes along $S$, for all times.
\end{lemma}
\begin{proof}
Because $S$ is connected, we can choose Hamiltonian functions $H_t$ for $F_t$ that vanish along $S$. We choose a new time dependent Hamiltonian of the form $\rho H_t$, where $\rho$ is a smooth bump function for any bounded open subset $U$ that contains $C$, i.e $\rho\mid_U=1$, and $\rho\mid_{V-U'}=0$, where $U'$ is any open subset such that $\bar{U}\subset U'$. The corresponding Hamiltonian vector field satisfies the conditions, because along $S$: $d(\rho H_t)=H_td\rho+\rho dH_t=0$.
\end{proof}

By a Hamiltonian isotopy $\{g_t\}_{t\in [0,1]}$ of symplectic embeddings $V\to Y$, where $V$ and $Y$ are equidimensional symplectic manifolds, we mean that associated the vector fields on $g_t(V)$ are Hamiltonian in their domain of definition. We also say that such an isotopy of embeddings (not necessarily Hamiltonian) is compactly supported, if there exists a compact subset $C$ of $V$ such that $g_t\mid_{V-C}$ is fixed for all times.

\textbf{Step 1:} $\Phi$ can be Hamiltonian isotoped to another diffeomorphism $\Phi'$ that still extends the equatorial Dehn twist, but also for which there exists an open ball $p\in B\subset M$ such that at all points of $B\cap L$, the Jacobian of $\Phi'$ is the identity.

By a complementary Lagrangian subbundle along $L$, we mean a Lagrangian subbundle of $TM\mid_{L-\{p\}}$, which is transversal to $T(L-\{p\})$, and fits together smoothly with the two tangent Lagrangian planes at $p$. Let $W$ be a Weinstein neighborhood model for $L$. For any complementary Lagrangian subbundle $\mathcal{L}$, we can construct a symplectic embedding from an open neighborhood of the nodal sphere in $W$ to $M$. Moreover, if we are given a one parameter family $\{\mathcal{L}_t\}_{t\in [0,1]}$, we get a Hamiltonian isotopy of embeddings.

We start with an arbitrary $\mathcal{L}$. Pushing forward by $\Phi$ produces another complementary Lagrangian subbundle $\Phi_*\mathcal{L}$. We can find a one parameter family $\{\mathcal{L}_t\}_{t\in [0,1]}$ such that $\mathcal{L}_0=\Phi_*\mathcal{L}$ and  $\mathcal{L}_1= \mathcal{L}$, becuse the space of complementary Lagrangian subspaces at each point is contractible. By the previous paragraph, we obtain a time dependent family of Hamiltonian vector fields defined on an open neighborhood of $L$. By construction, the time 1 map of this flow sends $\Phi_*\mathcal{L}$ to $\mathcal{L}$. Using Lemma \ref{lemcutoff} we cut these off to get a Hamiltonian isotopy supported near $p$, but defined in all of $M$, which  fixes $L$ pointwise, and still sends $\Phi_*\mathcal{L}$ to $\mathcal{L}$ near $p$. Since $\Phi\mid_L$, which is an equatorial Dehn twist, is identity near $p$, and a linear symplectomorphism that preserves two complementary Lagrangian subspaces and is identity on one of them has to be identity on the other as well, isotoping $\Phi$ using this Hamiltonian isotopy indeed gives the desired $\Phi'$.

\textbf{Step 2:} $\Phi'$ can be Hamiltonian isotoped to another diffeomorphism $\Phi''$ that still extends the equatorial Dehn twist, but also for which there exists an open ball $p\in B'\subset M$ such that all points of $B'$ are fixed under $\Phi''$.

First let us deal with the following local case. Namely, we consider $X:=\mathbb{R}^2\times\mathbb{R}^2$ with the symplectic structure $\omega$ obtained by its canonical identification with $T^*\mathbb{R}^2$. Let $K\subset X$ be the Lagrangian that is the union of the zero section, and the cotangent fiber of zero under that same identification. 

In what follows $U_i$, $i=1,2,3,4$, are open contractible neighborhoods of the origin in $X$, which shrink as $i$ increases. We take a symplectic embedding $f: U_1\to X$ that sends the points of $K\cap U_1$ to themselves, and moreover is the identity on $TX\mid_{K\cap U_1}$. Let us say that such an embedding (not necessarily symplectic) is fixed on $K\cap U_1$ to first order. 

We want to show that there is a compactly supported Hamiltonian isotopy $\{f_t\}_{t\in [0,1]}$ of symplectic embeddings $U_1\to X$ such that: \begin{enumerate} \item $f_0=f$ \item $f_1$ fixes an open neighborhood of the origin \item $f_t$ fixes $K\cap U_1$ pointwise. \end{enumerate}

We start with a compactly supported smooth isotopy $\{F_t\}_{t\in[0,1]}$ of embeddings $U_2\to X$ which satisfies properties $(1),(2)$ and a stronger version of $(3)$, where we require fixing to first order, not just pointwise. This isotopy can be constructed by taking convex linear combinations of $Id$ and $f$ near $p$, and using a parametric version of the inverse function theorem.

Now there exists a neighborhood $U_3$ of the origin: \begin{itemize} \item along which the family of forms $s\omega+(1-s)(F_t)^*\omega$ are all nondegenerate \item  on which $F_1$ is the identity\item which deformation retracts onto $K\cap U_3$.\end{itemize} For each $t$, we run the relative Moser argument for the family (varying with s) $s\omega+(1-s)(F_t)^*\omega$, and obtain $G_t$ defined in some neighborhood of the origin such that $(G_t)^*\omega=(F_t)^*\omega$, $G_0=G_1=id$, and $G_t$ fixes $K$ pointwise for all $t$. Here relative refers to the way in which we choose primitives, which actually is the standard one for proving neighborhood theorems in symplectic geometry, i.e. using the chain homotopy that we obtain from the deformation retraction (as in the Lemma 3.14 of \cite{McDuffSalamon}). Moreover, there is a common open domain of definition for $G_t$ by smooth dependence on initial data. We define the isotopy $\tilde{f_{t}}=fF_{1-t}^{-1}G_{1-t}$ which is symplectic and hence Hamiltonian. This defines a time dependent Hamiltonian vector field on $U_4$. Using Lemma \ref{lemcutoff}, we can extend it to $U_1$ by cutting it off. This gives us the desired $f_t$.

One then chooses local coordinates near $p$, and restricts $\Phi'$ to obtain an embedding of the form considered above. The compactly supported Hamiltonian isotopy constructed in the local model can then be implanted inside $M$, which achieves our goal.

\textbf{Step 3:} We can do Polterovich surgery to $L$, at $p$, to obtain an embedded Lagrangian $T$ such that $L\setminus T$ and $T\setminus L$ both lie in an arbitrarily small neighborhood of $p$, and moreover $\omega\mid_{\pi_2(M,T)}=0$.

Let us recall this surgery procedure very briefly (this reformulation is taken from \cite{AbouzaidSmith}). Again we work in the local model $X$ with the Lagrangian $K$. We also introduce complex numbers notation, namely we identify $\mathbb{C}^2_{z_1,z_2}\to \mathbb{R}^2_{x_1,x_2}\times \mathbb{R}^2_{y_1,y_2}$ using $z_j=x_j+iy_j$.

Let us now draw any proper smooth curve $\gamma: \mathbb{R}\to\mathbb{C}$ in the plane which satisfy the following conditions:
\begin{itemize}
\item There are no two numbers $t,t'\in\mathbb{R}$ such that $\gamma(t)=-\gamma(t')$. In particular, $\gamma$ doesn't pass through the origin
\item There exists $a>b\in\mathbb{R}$ such that $\gamma (t)$ is a positive real number for all $t\geq a$, and $\gamma (t)$ is a positive imaginary number for all $t\leq b$. 
\end{itemize} 
\begin{figure}
\def\svgwidth{0.5\linewidth}
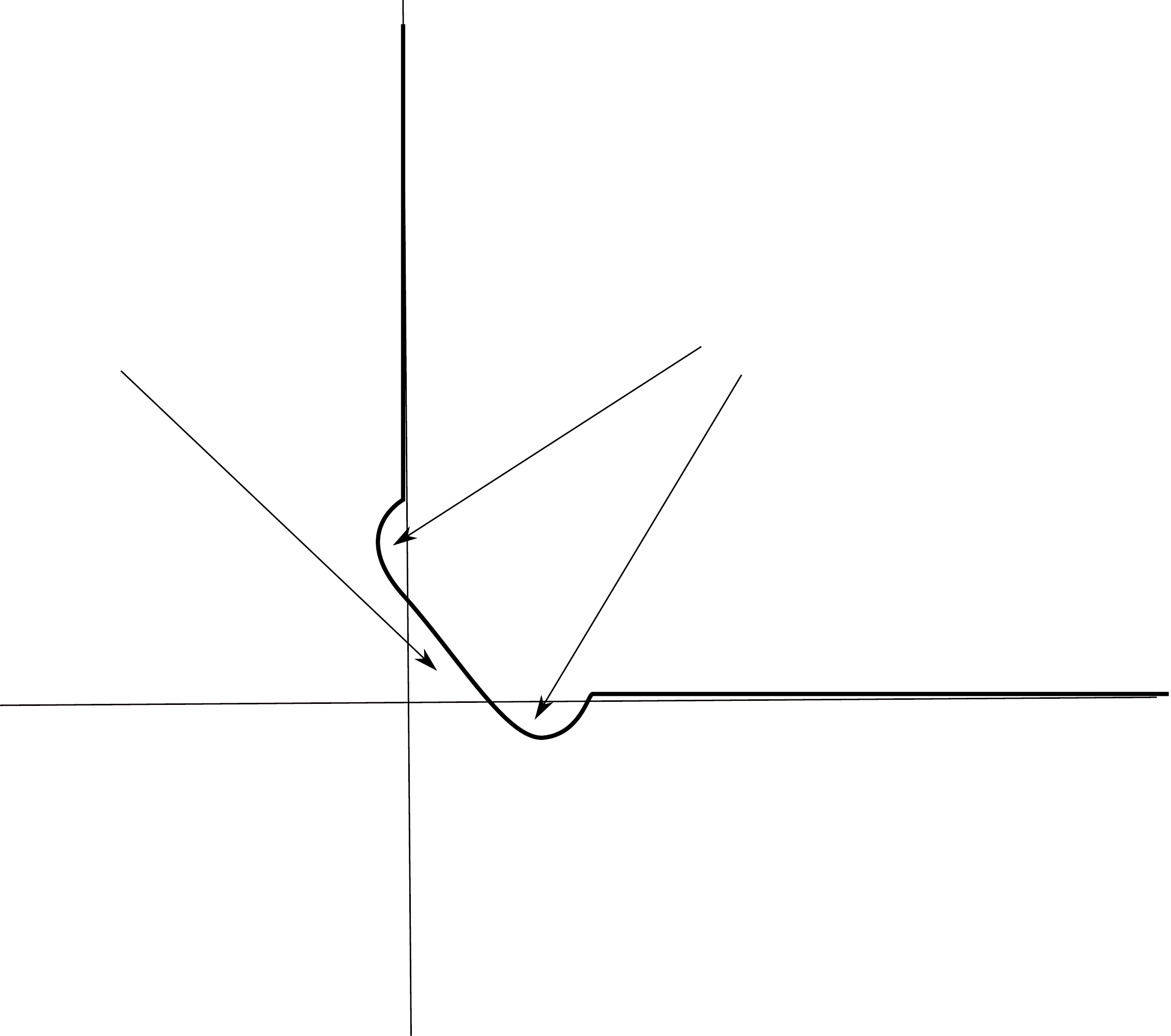
\caption{The curve that we use in the surgery procedure, where we require $Area_1=Area_2$}
\end{figure}

To any such $\gamma$, we can associate an embedded Lagrangian in $X$, which is given by $\bigcup \gamma(t)\cdot S^1$, where $S^1$ is the unit circle in $\mathbb{R}^2_{x_1,x_2}\times \{0\}$. Here $\cdot$ means scalar multiplication by complex numbers.

$L$ can also be modified by such $\gamma$ to obtain an embedded Lagrangian $T$. We claim that $\omega\mid_{\pi_2(M,T)}=0$ is satisfied, if the signed area between $\gamma$, and the piecewise linear curve $l$ that starts at $+\infty\cdot i$, goes to $0$ along the imaginary axis, and then continues in the positive direction of the real axis is zero. 

Consider the one parameter family of Lagrangians $\{L_s\}_{s\in [0,1]}$ such that $L_0=T$ and $L_1=L$, obtained by deforming $\gamma$ to $l$, keeping the area condition satisfied, and so that the first tangency points with the axes stay fixed for all times. We have canonical maps $L_{s'}\to L_s$ for $s'\leq s\in [0,1]$. For any disk $D$ with boundary on $T$, we can define a disk $D'$ with boundary on $L$, by adding $\bigcup_{s\in [0,1]} im(L_{0}\to L_s\mid_{\partial D})$ to $D$. 

We claim that $\int_{D}\omega=\int_{D'}\omega$, which implies the asphericity statement we want. For this, we need to show that the symplectic area of $\bigcup_{s\in [0,1]} im(L_{0}\to L_s\mid_{\partial D})$ is equal to $0$, which is a local computation depending only on $\partial D$. 

If $\partial D\cap \bigcup_{t\in [b,a]}\gamma(t)\cdot S^1$ consisted of horizontal lines (meaning lines of the form $\bigcup_{t\in [b,a]} \gamma(t)\cdot \alpha$, where $\alpha\in S^1$), then the claim would follow from the choice of $\gamma$. In fact, we can make this happen by isotoping $\partial D$ to another curve $\alpha$ on $T$. The final claim is that the areas of $\bigcup_{s\in [0,1]} im(L_{0}\to L_s\mid_{\partial D})$ and $\bigcup_{s\in [0,1]} im(L_{0}\to L_s\mid_{\alpha})$ are the same. 

We construct the two cycle formed by adding the isotopy between $\partial D$ and $\alpha$, and its transport to $L$, to these two (one with changed direction) chains. This two cycle is actually a boundary, as we can fill it in by the transports of the isotopy to other $L_t$'s. Because $T$ and $L$ are Lagrangians, the claim follows by Stokes theorem.

Clearly, the surgery region can be chosen arbitrarily small. 

\textbf{Step 4:} Get a contradiction using Theorem \ref{thmhu}. Note that in the positive double point case the result of the surgery $T$ from Step 3 is a torus, whereas in the negative double point case it is a Klein bottle. In both cases, $H^1(T,\mathbb{Z}_2)$ is two dimensional. We proved in Steps 1,2,3 that there exists a Hamiltonian diffeomorphism of $M$ that extends a diffeomorphism of $T$ which acts non-trivially on cohomology (a shear transformation in both cases). This is a contradiction to Theorem \ref{thmhu}.
\end{proof}

\begin{remark}\label{rmkZcoef}
In the positive double point case, we can instead use $\mathbb{Z}$-coefficients in Step 4 and obtain the theorem for any non-zero power of an equatorial Dehn twist. The only point to comment is that, in general, it is not true that every orientation preserving diffeomorphism $Y\to Y$ preserves at least one spin structure of $Y$ (not even if they act as identity on $H^1(Y,\mathbb{Z}_2)$, which is actually irrelevant). But, this statement is true for the two torus, in fact for any orientable surface. This follows from the fact that orientable 3-manifolds are Spin, by looking at the mapping torus of the diffeomorphism.
\end{remark}

\begin{remark}\label{rmktrivial}
The statement of this theorem would be wrong if we allowed diffeotopies instead of Hamiltonian isotopies. To see this note that any vector field on the sphere which is zero at the two points that come together can be extended to a Weinstein neighborhood of the nodal sphere.
\end{remark}

\subsection{Further directions}\label{ss1} Let us start with some elementary definitions. Let $W^4$ be an open symplectic manifold, and $Z\subset W$ be a two dimensional compact topological submanifold, which is a smooth Lagrangian submanifold away from finitely many points. We call such data a \textbf{geometric model}, and when it is clear what $Z$ is, we will denote it simply by the open manifold $W$.

Let $X^4$ also be a symplectic manifold. We define $LagGerm(Z,W,X)$ to be 
\begin{equation*}
\{(U \text{ an open neighborhood of $Z$ in $W$}, \text{symplectic embedding } \phi: U\to X)\}/\sim,
\end{equation*}
where $(U,\phi)\sim (U',\phi')$ if one has a $V\subset U,U'$, open neighborhood of $Z$, such that there exists symplectic embeddings $\{\psi_t:V\to X\}_{t\in [0,1]}$ with $\phi=\psi_0$ and $\phi'=\psi_1$  along $V$, and the total map $[0,1]\times V\to X$ being smooth. One can also define the smooth analogue $Germ(Z,W,X)$. 

Note that there is a special element in $LagGerm(Z,W,W)$ that is the germ of the identity map. Let us call that element trivial. If $\phi$ is a symplectic embedding that defines a germ in $LagGerm(Z,W,W)$, and fixes $Z$ setwise, by its \textbf{order} we mean the smallest positive integer $n$ such that the germ of $\phi^n$ is trivial in $LagGerm(Z,W,W)$. If there is no such $n$, we say that $\phi$ is of infinite order; if $n=1$, we call $\phi$ itself trivial. 

\begin{remark}
If the symplectic vector fields associated to some isotopy of symplectic embedding happen to be Hamiltonian in a germ of their domain of definition, then the Hamiltonians can be cut-off and extended to the whole ambient manifold making the flow globally defined. This comment is of course true without any conditions for $Germ$ and isotopy of smooth embeddings.
\end{remark}

As the sign of the intersection in the nodal sphere case shows $LagGerm(Z,W,X)$ can be sensitive to how $Z$ sits inside $W$, not just to $Z$, when $Z$ is not a smooth submanifold (obviously only on an open neighborhood of $Z$ inside $W$, but we choose to keep this notation). Not surprisingly, it also depends on $X$ (see the introduction of \cite{LalondeHu} for an example that is based on the Kodaira-Thurston manifold). 

\subsubsection{The torus case}\label{sss1}

Let us start with a corollary of Theorem 1 in the case where the nodal sphere has a positive double point. 

We first construct the geometric model $W$ corresponding to the Weinstein neighborhood of such Lagrangian nodal sphere as the self plumbing of the disc bundle inside $T^*S$ along the poles. See Figure 2 for a picture in the case of $T^*S^1$. We will think of $S$ as embedded in $\mathbb{R}^3_{x,y,z}$ in the standard way for ease of visualisation. Let $n$ and $s$ be the north and south poles. We take stereographic coordinates $(q_1^n,q_2^n)$ and $(q_1^s,q_2^s)$ corresponding to charts $\phi^n:B_{0.101}\to S$ and $\phi^s:B_{0.101}\to S$ around $n$ and $s$, where $B_{\epsilon}$ is the ball of radius $\epsilon$ in the $xy$-plane. This induces standard Darboux coordinates $(q_1^n,q_2^n,p_1^n,p_2^n)$ and $(q_1^s,q_2^s,p_1^s,p_2^s)$ inside $T^*S$ defined on the preimage of $\phi^n(B_{0.1})$ and $\phi^s(B_{0.1})$. Let $U$ be an open neighborhood of the zero section of $T^*S$ which intersects with these preimages exactly along $\{\abs{p^n}^2<0.1\}$ and $\{\abs{p^s}^2<0.1\}$. We can now define an open symplectic manifold $W$ by gluing $U$ to itself via the map: \begin{align}
( q^n, p^n) \mapsto (p^s,-q^s).\end{align}
Let us denote the canonical symplectic form on $W$ by $\omega_W$. By construction, we have a symplectic immersion $U\to W$, which characterizes $\omega_W$. The image of the zero section (which we denote by $Z$) inside $W$ is our geometric model.

Assume without loss of generality that the equatorial Dehn twist is done near the actual equator (i.e. intersection of the sphere with the $xy$-plane) and is supported at a small neighborhood $V\subset S$ of it. We take the $U$ above such that it contains the entire cotangent fibres above the points of $V$. Clearly, this equatorial Dehn twist can be extended to a symplectomorphism $g:W\to W$. 

\begin{figure}
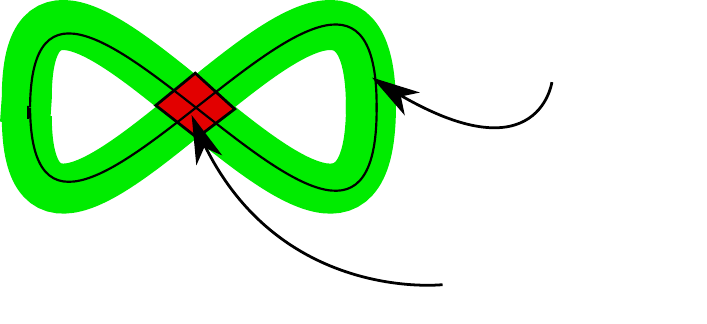
\caption{The plumbing construction for $T^*S^1$.}
\end{figure} 

\begin{corollary}\label{corflux}
$g$ is trivial in $Germ(Z,W,W)$, but it has infinite order in \\ $LagGerm(Z,W,W)$.
\end{corollary}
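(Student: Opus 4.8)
The plan is to establish the two halves separately. For triviality in $Germ(Z,W,W)$, I would invoke Remark \ref{rmktrivial} in its local form: the equatorial Dehn twist $g\mid_Z$ is a diffeomorphism of the nodal sphere that is isotopic to the identity through diffeomorphisms fixing the two poles (indeed a Dehn twist around an equatorial circle is smoothly isotopic to the identity on $S^2$, and the isotopy can be taken supported in $V$, hence away from the poles). Extending this smooth isotopy radially into the cotangent directions over a germ of $Z$ — using that the twist is supported where $U$ contains the full cotangent fibres — gives a smooth isotopy of embeddings from $g$ to the identity defined on a neighborhood of $Z$ in $W$. This shows the germ of $g$ equals the germ of the identity, i.e. $g$ is trivial in $Germ(Z,W,W)$.

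For the second half, suppose toward a contradiction that $g^n$ has trivial germ in $LagGerm(Z,W,W)$ for some $n\geq 1$; that is, there is a neighborhood $V_0$ of $Z$ in $W$ and a path of \emph{symplectic} embeddings $\psi_t:V_0\to W$, smooth in $(t,x)$, with $\psi_0=g^n\mid_{V_0}$ and $\psi_1=\mathrm{id}$. The associated time-dependent vector fields on $\psi_t(V_0)$ are symplectic. The key point, exactly as in the Remark following the definition of $LagGerm$, is that because $Z$ is a connected Lagrangian skeleton of $W$ (so $H^1$ of a neighborhood is carried by $Z$, and on $Z$ symplectic and Hamiltonian agree trivially since $\omega\mid_Z=0$), these symplectic vector fields are in fact Hamiltonian in a (possibly smaller) germ of their domains. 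Concretely, the primitives of the closed one-forms $\iota_{X_t}\omega_W$ can be chosen to vanish along $Z$ using connectedness of $Z$, just as in Lemma \ref{lemcutoff}. Then I cut these Hamiltonians off by a bump function equal to $1$ near $Z$ and supported in $V_0$, as in Lemma \ref{lemcutoff}, so that they vanish outside a neighborhood of $Z$ while still vanishing along $Z$; extending by zero produces a genuine Hamiltonian isotopy $\Psi_t:W\to W$ of the ambient manifold (compactly supported near $Z$) with $\Psi_1=\mathrm{id}$ near $Z$ and $\Psi_0$ agreeing with $g^n$ near $Z$. Composing, $\Phi:=g^n\circ\Psi_1^{-1}$ is a symplectomorphism of $W$ that agrees with the identity near $Z$; but then $\Phi^{-1}\circ g^n = \Psi_1$ shows $g^n$ is Hamiltonian isotopic (in $W$) to a symplectomorphism supported away from $Z$, and after a further cut-off near $Z$ one gets an honest globally defined Hamiltonian diffeomorphism $G$ of $W$ with $G\mid_Z = g^n\mid_Z$, the $n$-th power of an equatorial Dehn twist. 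Since $\omega_W\mid_{\pi_2(W,Z)}=0$ (the aspherical Weinstein model), this contradicts Theorem \ref{thm1} (in the strengthened form of Remark \ref{rmkZcoef}, which covers all nonzero powers in the positive double point case), because a nonzero power of an equatorial Dehn twist acts nontrivially on $H^1(T,\mathbb{Z})$ of the surgered torus.

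The main obstacle is the step asserting that the symplectic vector fields of the isotopy $\psi_t$ are Hamiltonian in a germ of $Z$; this is where I must use the precise topology of the geometric model, namely that $Z$ is a deformation retract of a neighborhood basis in $W$ and that $\omega_W$ vanishes on $Z$, so that $H^1$ of a small neighborhood of $Z$ is identified with $H^1(Z)$ and the flux/one-form $\iota_{X_t}\omega_W$, being exact on each $\psi_t(V_0)$ automatically (since it is exact on $W$ after shrinking, as the inclusion factors through a neighborhood retracting to $Z$)… one has to be slightly careful that "symplectic on the image $\psi_t(V_0)$" really does give closed, then exact, one-forms on a fixed small neighborhood of $Z$, uniformly in $t$. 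Once that is in place, everything else is the cut-off machinery of Lemma \ref{lemcutoff} together with a direct appeal to Theorem \ref{thm1}/Remark \ref{rmkZcoef}, so the proof is short modulo this point.
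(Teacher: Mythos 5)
Your first half (smooth triviality of the germ) is fine and is exactly the paper's appeal to Remark~\ref{rmktrivial}. Your reduction of the second half to Theorem~\ref{thm1} via Remark~\ref{rmkZcoef} is also the correct endgame. But the crucial middle step is wrong: you assert that the closed one-forms $\iota_{X_t}\omega_W$ are exact on a germ of $Z$ ``automatically\ldots since it is exact on $W$ after shrinking, as the inclusion factors through a neighborhood retracting to $Z$.'' This is false. A neighborhood of $Z$ retracts onto $Z$, and the nodal sphere $Z$ is homotopy equivalent to $S^2\vee S^1$, so $H^1$ of any such neighborhood is $\mathbb{R}$, not $0$ --- the paper says this explicitly ($H^1(W,\mathbb{R})=\mathbb{R}$). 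There is therefore a genuine flux obstruction, and the whole content of the corollary is to deal with it. Your auxiliary remark that ``on $Z$ symplectic and Hamiltonian agree trivially since $\omega\mid_Z=0$'' does not help and is in any case beside the point: the isotopy $\psi_t$ from $g^n$ to $\mathrm{id}$ cannot fix $Z$ (its endpoints already disagree on $Z$), so the vector fields $X_t$ do not vanish along $Z$, and you cannot argue exactness by restricting to $Z$.

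What the paper actually does to bridge this gap is the symplectic flux technique: since $H^1$ of a neighborhood is one-dimensional, one needs a symplectic (non-Hamiltonian) vector field on $W$ with a \emph{complete} flow, realizing the generator of the flux group, with which to correct the given symplectic isotopy of embeddings into a Hamiltonian one. The paper constructs this vector field explicitly: take a function $\rho$ on $S$ equal to $0$ near $s$ and $1$ near $n$, supported so that $d\rho$ lives over the equatorial region; $d\rho$ descends to a closed one-form on $W$ even though $\rho$ does not, and its symplectic dual is the desired complete-flow symplectic vector field. Without this construction (or an equivalent completeness/flux argument) your proof does not go through. So the plan is right in outline, but you are missing the one nontrivial ingredient and have replaced it with an incorrect exactness claim.
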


\begin{proof}
The smooth statement is easy as commented on before in Remark \ref{rmktrivial}. Let's assume that some power of $g$ is trivial in $LagGerm(Z,W,W)$. By an application of the symplectic flux technique \cite{McDuffSalamon}, we can construct from the isotopy of symplectic embeddings a Hamiltonian isotopy, which in the end extends the equatorial Dehn twist. This gives the desired contradiction by Remark \ref{rmkZcoef}. The only thing to note, since $H^1(W,\mathbb{R})=\mathbb{R}$, is the existence of a symplectic (but not Hamiltonian) vector field on $W$ whose flow exists for all times. This can be constructed by taking a function on $S$ which is equal to $0$ in a neighborhood of $s$, and $1$ in a neighborhood of $n$ such that these neighborhoods contain strictly the complement of $V$. This function can be pulled back to $T^*S$. Even though the function doesn't descend to $W$, its exterior derivative does, and the symplectic dual of this closed one-form gives us the desired vector field. Notice that this vector field is supported only away from the plumbing region, where the manifold looks like the portion of $T^*S$ near the equator. The flow there translates the cotangent fibres by the one form $d\rho$ on $S$. 

\end{proof}
\begin{remark}\label{rmklift}
It is easy to see that a diffeomorphism $S^2\to S^2$ fixing the poles extends to an element of $LagGerm(Z,W,W)$ if and only if $J_nJ_s^T=id$ where $J_n$ and $J_s$ are the Jacobian matrices at the poles in the $q$ coordinates we used in the construction. This implies that a rotation along the $z$-axis of $S^2$ extends to a symplectic embedding, whereas if the tangent spaces rotated in any other way in relation to each other it would not.
\end{remark}
\begin{remark}\label{rmknotdiff}
It is tempting to try to prove that $g$ as a diffeomorphism is smoothly isotopic to identity. In fact it is not even isotopic to a compactly supported diffeomorphism, which can be seen by looking at its action on the cohomology at infinity (as in \cite{Laitinen}). Also note that the careful choice of $U$ in the construction of $g$ is irrelevant for Corollary \ref{corflux}. 
\end{remark}

\subsubsection{The Klein bottle case}\label{sss2}

This immediately prompts the question of whether analogues of the previous subsection hold in the negative  double point case. There we can construct the geometric model $W'$ by changing the plumbing map to \begin{align}
( q^n_1,q^n_2, p^n_1,p^n_2) \mapsto( p^s_1,-p^s_2, -q^s_1,q^s_2).\end{align} Let us denote the analogous symplectomorphism by $g':W'\to W'$. 

\begin{corollary}\label{corklein}
$g'$ is trivial in $Germ(Z',W',W')$ but it has order two in \\ $LagGerm(Z',W',W')$ .
\end{corollary}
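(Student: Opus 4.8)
The plan is to argue exactly as in the proof of Corollary \ref{corflux}, keeping track of the one new feature: the surgery of a nodal sphere with negative double point produces a \emph{Klein bottle} $T$ rather than a torus, and the shear diffeomorphism of $T$ induced by an equatorial Dehn twist has order two on $H^1(T,\mathbb{Z}_2)$ rather than infinite order.

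\smallskip

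\textbf{Smooth triviality.} This is immediate from Remark \ref{rmktrivial}: any vector field on the nodal sphere vanishing at the double point extends to the Weinstein model, and the Dehn twist is the time-one flow of such a vector field (supported away from the double point), so its lift is smoothly isotopic to the identity through compactly supported diffeomorphisms, hence trivial in $Germ(Z',W',W')$.

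\smallskip

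\textbf{Order is at most two.} I would show that $(g')^2$ is trivial in $LagGerm$. The square of an equatorial Dehn twist is a full Dehn twist, which on $S$ is smoothly isotopic to the identity rel the double point; moreover the isotopy can be taken supported in an annulus away from the double point. Lifting this isotopy to the plumbing model (using, as in Remark \ref{rmklift}, that away from the plumbing region $W'$ looks like a piece of $T^*S$ near the equator) gives a path of symplectic embeddings from $(g')^2$ to the identity in a germ of a neighborhood of $Z'$. Equivalently: the standard lift of a full Dehn twist of $S^2$ supported in an annular region disjoint from the poles is Hamiltonian isotopic to the identity inside that annular piece of cotangent bundle (the relevant flux obstruction lives in $H^1$ of the annulus' worth of cotangent fibers and vanishes because a full twist has trivial flux), and this Hamiltonian isotopy can be cut off and implanted in $W'$ by Lemma \ref{lemcutoff}. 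Hence $(g')^2$ is trivial in $LagGerm(Z',W',W')$, so the order divides two.

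\smallskip

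\textbf{Order is not one.} Suppose $g'$ itself were trivial in $LagGerm(Z',W',W')$, i.e. some neighborhood germ of it is connected to the identity germ through symplectic embeddings $W'\supset V\to W'$. As in the proof of Corollary \ref{corflux}, I would apply the symplectic flux technique: since $H^1(W',\mathbb{R})=\mathbb{R}$, we can correct the isotopy of symplectic embeddings by a globally-defined symplectic (non-Hamiltonian) vector field — built from a function on $S$ equal to $0$ near $s$ and $1$ near $n$, pulled back to $T^*S$, whose exterior derivative descends to $W'$ and is supported away from the plumbing region — so that the resulting isotopy becomes Hamiltonian, and after cutting off (Lemma \ref{lemcutoff}) extends to a Hamiltonian diffeomorphism of $W'$ (indeed of any ambient geometrically bounded manifold, e.g. we may cap off $W'$), still extending the equatorial Dehn twist on $Z'$. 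Now run Steps 1--4 of the proof of Theorem \ref{thm1}: Steps 1 and 2 normalize near the double point $p$, Step 3 performs Polterovich surgery at $p$ to get an \emph{embedded} Lagrangian $T$ with $\omega|_{\pi_2(W',T)}=0$ — and here $T$ is a Klein bottle, not a torus — and the ambient Hamiltonian diffeomorphism restricts to a diffeomorphism of $T$ which is the shear induced by the Dehn twist, acting nontrivially on $H^1(T,\mathbb{Z}_2)$ (two-dimensional in the Klein bottle case as well). This contradicts Theorem \ref{thmhu} with $\mathbb{Z}_2$-coefficients. Therefore $g'$ is nontrivial, and its order is exactly two.

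\smallskip

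\textbf{Main obstacle.} The delicate point — and the reason the statement differs from Corollary \ref{corflux}, cf. Remark \ref{rmkklein} — is the upper bound ``order at most two'': one must check that the full equatorial Dehn twist really does lift to a \emph{symplectic} embedding germ isotopic to the identity, i.e. that the Moser/flux bookkeeping for the annular piece of $T^*S$ genuinely kills the obstruction for the square while (by Theorem \ref{thmhu}, which uses Floer theory and is where the sign of the double point does not enter) the single twist remains obstructed. The $\mathbb{Z}$-coefficient refinement used in Remark \ref{rmkZcoef} is unavailable here precisely because the Klein bottle is non-orientable, which is consistent with the finite order.
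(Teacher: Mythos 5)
Your ``smooth triviality'' and ``order is not one'' steps are fine and agree with the paper (the latter is indeed just Corollary~\ref{corflux} rerun in the Klein bottle case, where $H^1(T,\mathbb{Z}_2)$ is still two-dimensional and the shear acts nontrivially, so Theorem~\ref{thmhu} with $\mathbb{Z}_2$-coefficients applies). But the ``order is at most two'' step, which you correctly identify as the genuinely new point, has a gap, and the paper takes a different route there.

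You assert that a (double) Dehn twist of $S$ supported in an annulus away from the poles is ``smoothly isotopic to the identity rel the double point, with the isotopy supported in an annulus away from the double point,'' and that its cotangent lift is Hamiltonian isotopic to the identity inside the annular piece of $T^*S$ by a flux computation. Neither claim holds as stated: the Dehn twist (and every nonzero power of it) generates the mapping class group of the annulus rel boundary, so no compactly supported isotopy to the identity exists; and the cotangent lift is therefore not isotopic to the identity through compactly supported symplectomorphisms of $T^*(\text{annulus})$ either. Any isotopy to the identity must move points near the poles, and once it does, you are forced to confront the plumbing identification there --- which is exactly the place where the sign of the double point enters and where the annular flux argument gives you no control. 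As written, your argument would also ``prove'' that a single twist is trivial, which contradicts the other half of the corollary.

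What the paper does instead is exploit the change in the matching condition of Remark~\ref{rmklift}. In the negative double point case the condition for a diffeomorphism of $S$ to extend to an element of $\mathrm{LagGerm}(Z',W',W')$ becomes $J_n R J_s^T R = \mathrm{id}$ (with $R$ the reflection), i.e.\ the differentials at the two poles must rotate in \emph{opposite} directions, rather than the same direction as in the torus case. This allows the Dehn twist germ to be ``unwound'' by spreading the $2\pi$ twist into a $\pi$ rotation at each pole while staying inside the admissible class: one shows that $g'$ is equivalent as a germ to the standard lift of the $180^\circ$ rotation about the $z$-axis. The square of that rotation is the identity, so $(g')^2$ is the trivial germ. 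This is where the sign of the double point genuinely matters; in the torus case the same attempt fails because equal rotations at the two poles never absorb a net twist, which is consistent with $g$ having infinite order in Corollary~\ref{corflux}. To repair your proof you should replace the flux/annulus argument with this germ-equivalence to a $180^\circ$ rotation.
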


\begin{proof}
We show that the square of $g'$ is trivial in $LagGerm(Z',W',W')$. The other statements are completely analogous the positive double point case. The point is that now the condition in the Remark \ref{rmklift} has changed to $J_nRJ_s^TR=id$, where $R$ is reflection along the $\frac{\partial}{\partial q_2}$ axis. Hence, now the diffeomorphisms $S^2\to S^2$ that extend (if the Jacobian matrices at the poles are assumed to be rotations) to a germ are the ones that rotate the two poles in opposite directions. Here opposite term means opposite to the effect of the rotation along $z$-axis. Hence, one can see that in fact $g$ is equivalent as a germ to the standard lift of $180$ degrees rotation along the $z$-axis, from which the claim follows.
\end{proof}

\subsubsection{More singular Lagrangians}\label{sss3}

Let us now construct $W$ in a different way. Consider the solution set $\tilde{Z}$ of the equation $x^2=y^2$ inside $\mathbb{C}^2$. If we equip $\mathbb{C}^2$ with the symplectic structure $Re(dxdy)$, then $\tilde{Z}$ becomes an immersed Lagrangian. Now let us attach an open neighborhood of the zero section of $T^*(S^1\times (-\epsilon,1+\epsilon))$ to the unit ball in $\mathbb{C}^2$, along a Weinstein neighborhood of $Z$, near the boundary. More precisely,  
\begin{equation}
Z\cap \{(x,y)\mid 1-\epsilon<(\abs{x}^2+\abs{y}^2)^{1/2}<1\}
\end{equation}
can be identified with $S^1\times (-\epsilon,0)\cup S^1\times (1,1+\epsilon)$ such that the radius coordinate corresponds to the second coordinate. There are two different ways of gluing, which corresponds to the double point being positive or negative. It is enough to consider the positive one to make the point. We lift this to the desired symplectic gluing by Weinstein neighborhood theorem and the standard symplectic lift. This gives an open manifold $W$ with the nodal sphere $Z$, formed by the union of $\tilde{Z}$ with the core of the handle, inside (i.e. a geometric model). $W$ can be given a Weinstein structure but that seems irrelevant for our considerations.

Now, consider the diffeotopy of the unit ball: $(x,y)\mapsto (e^{2it}x, e^{2it}y)$, which fixes $Z$. When $t=\pi/2$, this gives a symplectomorphism. The diffeotopy can be extended to an open neighborhood of the core of the handle in such a way that $S^1\times (-\epsilon,1+\epsilon)$ rotates less and less from $0$ to $\epsilon$, does not rotate at all in $[\epsilon,1-\epsilon]$, starts rotating more and more from $1-\epsilon$ to $1$, and it matches the rotation of $Z$ in the gluing regions. Moreover this can be done in such a way that at $t=\pi/2$ the extension is still symplectic. This requires some care, a Moser argument is necessary in the gluing of two parts (as in the proof of Theorem \ref{thm1}) since the two (germs of) flows match up only up to Hamiltonian isotopy (on the handle we start with the standard lift of the smooth flow). The resulting element of $LagGerm(Z,W,W)$ is the same as the germ of $g$ above.

Clearly this construction can be generalized to the case where $x^2=y^2$ is replaced by $x^m=y^n$. W can also allow more general handle attachments to the link at infinity. The case where $(m,n)=1$ is the cleanest. There the link of the singularity has just one component, and we attach a disk to get the geometric model $W_{m,n}$ with Lagrangian $Z_{m,n}$ inside. The flow has to be replaced by $(x,y)\mapsto (e^{nit}x, e^{mit}y)$. Let us call $\phi_{m,n}$ the symplectic germ obtained at $t=\frac{2\pi}{m+n}$ (after extending to the handle, which is more canonical than the cylinder case by the Alexander trick).

\begin{conjecture}
$\phi_{m,n}$ is trivial in $Germ(Z_{m,n},W_{m,n},W_{m,n})$ but has order $m+n$ in $LagGerm(Z_{m,n},W_{m,n},W_{m,n})$.
\end{conjecture}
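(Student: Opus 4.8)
The plan is to separate the two halves of the statement, exactly as in Corollaries \ref{corflux} and \ref{corklein}. The smooth triviality is immediate: by construction $\phi_{m,n}$ is the time-$\tfrac{2\pi}{m+n}$ map of a smooth isotopy of embeddings of a neighborhood of $Z_{m,n}$ that starts at the identity (the rotation $(x,y)\mapsto(e^{nit}x,e^{mit}y)$ on the $\mathbb{C}^{2}$ side, suitably extended over the handle), so restricting that isotopy to $[0,\tfrac{2\pi}{m+n}]$ already exhibits $\phi_{m,n}$ as smoothly isotopic to the identity, hence trivial in $Germ(Z_{m,n},W_{m,n},W_{m,n})$ (compare Remark \ref{rmktrivial}). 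For the upper bound in $LagGerm$ --- order dividing $m+n$ --- one instead runs the flow for the full time $2\pi$. On the $\mathbb{C}^{2}$ piece the map $(x,y)\mapsto(e^{2\pi ni}x,e^{2\pi mi}y)$ is literally the identity; writing the branch of the singularity as $x=t^{n}$, $y=t^{m}$, the rotation becomes $t\mapsto e^{is}t$, which closes up at $s=2\pi$, and since (for coprime $m,n$) the handle is a disc and its diffeomorphism is obtained from the rotation of $Z_{m,n}$ near the link by the Alexander trick --- under which a rotation of the boundary circle extends to the corresponding rotation of the whole disc --- the base map of $\phi_{m,n}^{\,m+n}$ is again a genuine $2\pi$-rotation, i.e.\ the identity. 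Thus $\phi_{m,n}^{\,m+n}$ is the identity outside a neighborhood of the gluing region, where it differs from the identity only through the Moser correction built into the construction of $\phi_{m,n}$; that correction is Hamiltonian, and since $W_{m,n}$ deformation retracts onto $Z_{m,n}\cong S^{2}$ so that $H^{1}(W_{m,n},\mathbb{R})=0$, the associated symplectic isotopy carries no flux obstruction. A cut-off as in Lemma \ref{lemcutoff} then makes $\phi_{m,n}^{\,m+n}$ trivial in $LagGerm(Z_{m,n},W_{m,n},W_{m,n})$.

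The lower bound --- non-triviality of $\phi_{m,n}^{k}$ in $LagGerm$ for $0<k<m+n$ --- is the real content. The most naive approach would imitate Steps 3--4 of the proof of Theorem \ref{thm1}: replace the plane-curve singularity of $Z_{m,n}$ by a smooth Lagrangian surface $T_{m,n}$ (a Lagrangian smoothing with the topology of the Milnor fibre, glued to the rest of $Z_{m,n}$), arrange $\omega\mid_{\pi_{2}(W_{m,n},T_{m,n})}=0$ by an area-balancing condition as there, and play the induced diffeomorphism of $T_{m,n}$ against Theorem \ref{thmhu}. This is doomed in general: for $(m,n)=(2,3)$ the Milnor fibre is a once-punctured torus, so $T_{2,3}$ is a two-torus, on which any mapping class of order dividing $5$ acts trivially on $H^{*}(-;\mathbb{Z})$ (there is no element of order $5$ in $GL(2,\mathbb{Z})$), and Theorem \ref{thmhu} then detects nothing.

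One is therefore forced into a Floer-theoretic argument, and the natural candidate is the exact analogue of Theorems \ref{thm2} and \ref{thm3}: the $A_{\infty}$-automorphism of $CF(Z_{m,n},Z_{m,n})$ induced by a symplectomorphism extending $\phi_{m,n}$ should act as the identity on cohomology --- so it is seen only through a well-defined conjugacy class in a formal-automorphism group, via a symmetrization map in the spirit of (\ref{symmetrization}) --- and the upper bound above forces that class to be $(m+n)$-torsion; the lower bound is then the assertion that neither it nor its first $m+n-1$ powers is trivial. Mirror symmetry suggests the concrete shape: the conjugacy class should be that of the formal germ at the origin of the residual $\mathbb{C}^{*}$-rotation $(x,y)\mapsto(e^{2\pi in/(m+n)}x,e^{2\pi im/(m+n)}y)$ on a resolution or smoothing of $\{x^{m}=y^{n}\}\subset\mathbb{C}^{2}$, which is an honest automorphism of order exactly $m+n$, so that once the Floer-theoretic identification is in place the remaining verification is purely algebraic, in the style of Section 4. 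The main obstacle is precisely that Floer-theoretic input: for $m,n\ge 2$ the point $x^{m}=y^{n}$ is \emph{not} a transverse double point, so $Z_{m,n}$ is genuinely singular rather than immersed and the theory of \cite{Akahoimmersed} does not apply directly. One would need to develop Floer theory for such singular Lagrangians, or --- more in the spirit of the Appendix --- to compute the Fukaya-categorical action of $\phi_{m,n}$ indirectly, either by tracking it through the smoothing to $T_{m,n}$ or by matching the relevant invariants of the Weinstein manifold $W_{m,n}$ with those of the mirror geometry above. This is why the statement is posed only as a conjecture.
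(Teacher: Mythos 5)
Note first that this statement is posed as a conjecture, not a theorem, so the paper offers no proof of it; what it gives, in the paragraph following the conjecture, is a sketch of a strategy for the hard direction together with a report that this strategy succeeds for $(m,n)=(3,2)$. Your treatment of the two easier parts is sound and in the spirit of the paper: the smooth triviality follows as in Remark \ref{rmktrivial}, and the upper bound (order dividing $m+n$ in $LagGerm$) via periodicity of the ambient $\mathbb{C}^*$-flow plus the vanishing of $H^1(W_{m,n},\mathbb{R})$ for coprime $m,n$ (so the flux obstruction that was the whole subtlety in Corollary \ref{corflux} is simply absent here) is correct. Your observation that the surgery-plus-Theorem-\ref{thmhu} route of the first proof of Theorem \ref{thm1} is doomed --- because for $(m,n)=(2,3)$ the smoothed Lagrangian is a torus and $GL(2,\mathbb{Z})$ has no element of order $5$ --- is a correct and worthwhile justification for needing something finer.

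Where you and the paper diverge is on what that finer invariant should be. You propose building a Floer theory for the singular skeleton $Z_{m,n}$ itself and replaying Theorems \ref{thm2}--\ref{thm3}, with the mirror's $\mathbb{C}^*$-rotation predicting the formal automorphism; you correctly flag that this requires Floer theory for non-immersed singular Lagrangians, which does not currently exist. The paper's hinted approach is different: it tracks the action of $\phi_{m,n}$ on the finite collection of \emph{smooth} non-isotopic exact Lagrangians in $W_{m,n}$ constructed in \cite{Shendecluster}, aiming to show that $\phi_{m,n}$ permutes them with a permutation of order exactly $m+n$; the stated geometric ingredients are a generalization of Polterovich surgery using partial smoothings (cusp to node) together with the hyperk\"ahler trick, and a careful analysis of exactness for a well-chosen primitive. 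That strategy entirely sidesteps Floer theory on $Z_{m,n}$: one only needs Lagrangian Floer theory between smooth exact Lagrangians, which is available, and the invariant is the permutation on a finite set of Hamiltonian isotopy classes rather than an $A_\infty$-automorphism of a (nonexistent) Floer algebra. Your route, if the requisite Floer theory were available, would be more systematic and would in principle yield the full conjugacy class of a formal automorphism as in Theorem \ref{thm3}, not just its order; the paper's route is less technologically demanding and is the one that has actually been carried out (for $(3,2)$). Adding a mention of the Shende et al.\ Lagrangians and the permutation idea would bring your sketch in line with the paper's intended direction.
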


The hard part is to show that $\phi_{m,n}$ doesn't have lower order in $LagGerm$. In \cite{Shendecluster}, Shende et. al. construct non-isotopic exact Lagrangians inside $W_{m,n}$ and from their construction it seems like $\phi_{m,n}$ permutes those Lagrangians by a permutation which has order $m+n$. In the case of $(m,n)=(3,2)$, I, very recently, proved this statement by a more explicit understanding of these $5$ Lagrangians. The main geometric ingredient is to generalize Polterovich surgery with a combined use of possibly partial smoothings of complex singularities (e.g. cusp to node) and the hyperkahler trick, and to understand the exactness properties of these Lagrangians for a carefully chosen primitive. The method would likely generalize to all cases. The details of this will appear elsewhere.

\section{The abouzaid model}\label{s2}

\subsection{The model}\label{ssmodel}

From now until the end of the paper, we work in the setting where $L$ is Lagrangian nodal sphere with a positive double point inside a geometrically bounded symplectic manifold $M^4$ such that $\omega\mid_{\pi_2(M,L)}=0$.

$S^2$ is orientable and has a unique spin structure. It is well-known that in this case the $\mathbb{Z}_2$-graded Floer cochain groups $CF^*(L,L)$ with $\mathbb{Z}$-coefficients are well defined. Under the asphericity assumption we are making, this works almost identically to the embedded case (\cite{Akahoimmersed} \cite{BaoAlstonimmersed}).

In \cite{Abouzaidtopological}, Abouzaid introduced topological models for $CF^*(L,L)$. He used Morse and simplicial models. In \cite{Paulbook}, a deRham model was described, and that is the one we will use in this paper.

Let us again think of $S$ as embedded inside $\mathbb{R}^3$ as the standard unit sphere with the poles being the self-intersection points, and also let $D:=\{\abs{z}<1\}\subset\mathbb{C}$. Let $A:S\to S$ be the antipodal map. Denote the poles on $S$ by $n$ and $s$, and fix an embedding $\iota_n:D\to S$ onto a small neighborhood of $n$. Let $\iota_s:=A\circ \iota_n$. We note that we could choose any other embedding onto a neighborhod of $s$ with orientation different from $\iota_n$.

Abouzaid model $\mathcal{A}_L$ is a non-commutative dga. The underlying $\mathbb{Z}_2$ graded vector space is: \begin{equation*}
\Omega^*(S^2)\oplus\Omega^*(D)[-1]\oplus\Omega^*_{\text{cpct}}(D)[1].
\end{equation*}
We made a point of putting $-1$ shift to stress that a $\mathbb{Z}$ grading is algebraically possible. The differential is the deRham differential acting on each component separately.
The product structure is given by 
\begin{align*}
(\alpha_2,\beta_2,\gamma_2)*&(\alpha_1,\beta_1,\gamma_1)=(\alpha_2\alpha_1+(-1)^{\abs{\beta_1}}\iota_{n,*}(\gamma_2\beta_1)+(-1)^{\abs{\gamma_1}}\iota_{s,*}(\beta_2\gamma_1),\\&\iota_{n}^*(\alpha_2)\beta_1+(-1)^{\abs{\alpha_1}}\beta_2(\iota_{s}^*\alpha_1),\iota_{s}^*(\alpha_2)\gamma_1+(-1)^{\abs{\alpha_1}}\gamma_2(\iota_{n}^*\alpha_1)).
\end{align*}
\begin{theorem}[Abouzaid]\label{thmmodel}
There exists an $A_{\infty}$ quasi-isomorphism $\mathcal{A}_L\to CF^*(L,L)$. 
\end{theorem}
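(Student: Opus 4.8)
The plan is to prove this by a Piunikhin--Salamon--Schwarz type construction: realize the comparison map as a count of pseudo-holomorphic half-strips carrying de Rham form constraints, promote it to an $A_\infty$-map, and then check that it induces an isomorphism on cohomology by an energy filtration (this is, in effect, the de Rham incarnation of \cite{Abouzaidtopological}, and for the analytic foundations I would defer to that paper and to \cite{Paulbook}). As a preliminary step I would invoke the localization discussed in the Technical details: since $\omega\mid_{\pi_2(M,L)}=0$, the integrated maximum principle, or a monotonicity argument as in \cite{Jcurves}, confines every pseudo-holomorphic curve relevant to $CF^*(L,L)$ and to the comparison maps to an arbitrarily small Weinstein neighbourhood $W$ of $L$ --- the standard self-plumbing of the unit disc bundle of $T^*S^2$ --- inside which everything is exact and explicit. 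Thus it is enough to build the quasi-isomorphism with $M$ replaced by $W$.

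Inside $W$ I would set up the immersed Floer complex of $\iota\colon S^2\looparrowright W$ in the Morse--Bott/de Rham style (cf. \cite{Abouzaidtopological}, \cite{Akahoimmersed}, \cite{Paulbook}): the clean self-intersection locus of $\iota$ is $S^2\sqcup\{p_+\}\sqcup\{p_-\}$, the diagonal $S^2$ together with the two orderings of the branches at the node $p$ (write $x_0,y_0$ for the two preimages of $p$). The diagonal component contributes, via the usual PSS map for the zero section, the de Rham complex $\Omega^*(S^2)$; the two branch-jump sectors contribute the complexes attached to the coordinate disc $D$ around the node, one in ordinary and one in compactly supported forms. The asymmetry between $\Omega^*(D)$ and $\Omega^*_{\text{cpct}}(D)$ is forced by which way a Floer strip must bend at the node --- i.e. by the distinction between $(x_0,y_0)$ and $(y_0,x_0)$ --- and this is exactly where positivity of the double point enters: each branch-jump generator has degree $1$, and $H^0(D)$, resp. $H^2_{\text{cpct}}(D)$, produces the two degree-one classes that, together with $H^*(S^2)$, give $\Lambda^*(\mathbb{C}^2)$. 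The product on $\mathcal{A}_L$ is arranged so that the terms $\iota_{n,*}$ and $\iota_s^*$ literally record the two ways a holomorphic triangle can route through the node.

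The comparison map $\mathcal{A}_L\to CF^*(L,L)$ would then be defined componentwise: on $\Omega^*(S^2)$ by the de Rham PSS map --- pull back the form to a marked point on a holomorphic half-strip limiting to a diagonal generator and push forward over the moduli space \cite{PSS} --- and on $\Omega^*(D)[-1]$ and $\Omega^*_{\text{cpct}}(D)[1]$ by the analogous count of holomorphic strips asymptotic to $p_\pm$ with a marked point constrained by the given form on $D$. Verifying that this assembles into a chain map, and then promoting it to an $A_\infty$-map by also counting discs with several inputs and several form insertions, is a standard gluing/degeneration analysis of the one-dimensional moduli spaces, with the explicit formulas for $\mathcal{A}_L$ built precisely to match the boundary contributions. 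To conclude it is a quasi-isomorphism I would filter both sides by symplectic action: on the associated graded the map is the identity on $\Omega^*(S^2)$ and the obvious zero-energy isomorphism on the two disc sectors, so a comparison of the resulting spectral sequences --- bounded in each fixed degree --- upgrades this to a quasi-isomorphism.

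The main obstacle is the analysis near the node. One must establish regularity for the moduli spaces of holomorphic discs bounding the immersed $S^2$ and, above all, rule out teardrop bubbling --- a disc with a single corner at the double point --- so that the immersed pair $(\iota,L)$ is unobstructed and the Floer differential squares to zero. This is precisely where $\omega\mid_{\pi_2(M,L)}=0$ together with the exactness of the local model $T^*S^2$ is decisive: teardrops and non-constant disc bubbles carry positive symplectic area and hence cannot occur, so the obstruction class vanishes and all the moduli spaces above are compact. The remaining difficulties are bookkeeping: coherent orientations (available since $S^2$ is spin) must be matched to the sign conventions built into the product formula for $\mathcal{A}_L$, and the de Rham --- as opposed to Morse or simplicial --- incarnation of the model has to be made rigorous using currents / de Rham chains, a point I would handle as in \cite{Paulbook} rather than belabor here.
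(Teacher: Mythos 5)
Your proposal takes a genuinely different route from the one the paper (following Abouzaid \cite{Abouzaidtopological}) uses. You set up a direct de Rham PSS map $\mathcal{A}_L \to CF^*(L,L)$ by counting pseudo-holomorphic half-strips and discs with form-constrained marked points, fiber-integrating over the moduli spaces, and then proving quasi-isomorphism by an action filtration. As the paper explains in Remark \ref{rmkchoices} and elaborates in the Appendix, Abouzaid's argument instead chains together intermediate models --- Floer $\to$ Morse $\to$ Simplicial $\to$ Singular $\to$ de Rham --- with the only genuinely geometric step being the Floer $\leftrightarrow$ Morse comparison (a PSS moduli problem feeding into gradient trees); every step downstream of the Morse model $\mathcal{M}_L$ is discrete differential topology. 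The point of that chain is precisely to avoid the difficulty you flag as your ``main obstacle'': one never integrates differential forms over moduli spaces with corners, so no de Rham chains/currents machinery is needed, and coherent orientations are pinned down at the Morse level where the bookkeeping is classical. Your direct route is more economical conceptually, producing a single geometrically transparent map, but the analytic cost (smooth evaluation maps, fiber integration across all boundary strata, sign-matching against the explicit product formula of $\mathcal{A}_L$) is real and cannot simply be deferred to \cite{Abouzaidtopological}, since that reference does not actually set up the de Rham incarnation; similarly, the integrated maximum principle you lean on for localization presupposes exactness, whereas in the merely aspherical setting the paper has to fall back on monotonicity or virtual techniques. Both routes should be viable in the end, but the paper also wants the intermediate Morse and simplicial categories for a separate reason: the Appendix's proof of the naturality statement (Theorem \ref{thmimm}) works by building strictly $\mathbb{Z}$-equivariant functors between each consecutive pair of models in Abouzaid's chain, which would be harder to organize if the comparison were collapsed into one de Rham PSS map.
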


\begin{remark}\label{rmkchoices}
The proof of this theorem goes by comparing the two sides with a Morse model $\mathcal{M}_L$. The comparison of $\mathcal{M}_L$ with $\mathcal{A}_L$ is relatively standard differential topology, whereas that of $\mathcal{M}_L$ and $CF(L,L)$ is a little bit trickier, and involves setting up a PSS type moduli problem. In particular, one has to make a certain amount of choices for both sides to be defined (not to suggest that the former comparison is devoid of choices, for example a parametrization $S^2\looparrowright M$ of $L$ is needed in both). We will modify this construction in the Appendix to our needs so that we can prove the naturality statement we need. A priori it is not obvious if the quasi-isomorphism we use in the end is the same with the one constructed in \cite{Abouzaidtopological}.
\end{remark}

The proof of the following can be found in \cite{Paulcategorical}. We also write down an explicit $A_{\infty}$-quasi-isomorphism in the proof of Theorem 8, in Section 4.3.
\begin{proposition}[Abouzaid, Seidel]
$\mathcal{A}_L$ is formal, and its cohomology $H(\mathcal{A}_L)$ is isomorphic to $\Lambda:=\Lambda^*{\mathbb{C}^2}$.
\end{proposition}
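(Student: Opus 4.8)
The statement has two parts---the isomorphism type of the cohomology ring and formality---and the plan is to get both at once by writing down an explicit $A_{\infty}$-quasi-isomorphism $F\colon\Lambda\to\mathcal{A}_L$, where $\Lambda=\Lambda^*(\mathbb{C}^2)$ is regarded as an $A_{\infty}$-algebra with zero differential and only the exterior product. This is the route carried out in Section 4.3; I sketch its shape.

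\emph{The cohomology ring.} Since $d$ acts diagonally, $H(\mathcal{A}_L)=H^*_{dR}(S^2)\oplus H^*_{dR}(D)[-1]\oplus H^*_{\mathrm{cpct}}(D)[1]$; as $D$ is contractible with compactly supported cohomology concentrated in top degree, this is $\mathbb{C}$ in degree $0$ (the unit $e=(1,0,0)$), $\mathbb{C}^2$ in degree $1$ (spanned by $b=(0,1,0)$ with $1\in\Omega^0(D)$ and $c=(0,0,\mu)$ with $\mu\in\Omega^2_{\mathrm{cpct}}(D)$, $\int_D\mu=1$), and $\mathbb{C}$ in degree $2$ (spanned by $w=(\sigma,0,0)$ with $\sigma$ an area form, $\int_{S^2}\sigma=1$)---the Betti numbers of $\Lambda$. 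Evaluating $*$ on these representatives, $b*b=c*c=0$ (the product never pairs two $\beta$- or two $\gamma$-components), while $b*c$ and $c*b$ lie in the $\Omega^*(S^2)$-slot and equal $\pm\iota_{s,*}\mu$ and $\pm\iota_{n,*}\mu$. Because $\iota_s=A\circ\iota_n$ is taken with orientation opposite to that of $\iota_n$ (the antipodal map reverses orientation on $S^2$), one has $\int_{S^2}\iota_{n,*}\mu=1$ and $\int_{S^2}\iota_{s,*}\mu=-1$, so $b*c$ and $c*b$ are nonzero in cohomology. The algebra map $\Lambda^*(\mathbb{C}^2)\to H(\mathcal{A}_L)$ sending $x_1\mapsto[b]$, $x_2\mapsto[c]$ (well defined since $H(\mathcal{A}_L)$ is graded-commutative) is then an isomorphism by the dimension count.

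\emph{Formality.} I would construct $F$ inductively. Set $F_1(e)=e$, $F_1(x_1)=b$, $F_1(x_2)=c$, and---exploiting the freedom in the choice of top cocycle---$F_1(x_2\wedge x_1)=\iota_{n,*}\mu$, hence $F_1(x_1\wedge x_2)=-\iota_{n,*}\mu$. Then $F_1$ is already multiplicative on every product except $x_1\wedge x_2$, where the defect is $-(\iota_{n,*}\mu+\iota_{s,*}\mu)$, an exact $2$-form on $S^2$ (total integral $1-1=0$, again by the orientation reversal); a primitive provides $F_2(x_1,x_2)$, and the remaining components of $F_2$ may be taken to vanish. Each subsequent stage solves the $F_n$-part of the $A_{\infty}$-relation against an obstruction cocycle in $\mathcal{A}_L$. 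A convenient bookkeeping tool is the extra $\mathbb{Z}$-grading (``weight'') on $\mathcal{A}_L$ assigning $\Omega^*(S^2)$, $\Omega^*(D)[-1]$, $\Omega^*_{\mathrm{cpct}}(D)[1]$ the weights $0,+1,-1$: both $d$ and $*$ preserve it, and a product of two weight-$(+1)$ or of two weight-$(-1)$ elements vanishes. Combined with the degree grading, this forces the transferred minimal products $\mu_n$ on $\Lambda$ (and the corresponding $F_n$) to vanish for $n\ge 3$ except possibly on ``balanced'' inputs consisting of equally many $x_1$'s and $x_2$'s, necessarily landing in $\mathbb{C}\cdot x_1x_2$.

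\emph{Main obstacle.} The crux is eliminating these surviving balanced higher operations: the weight and degree gradings confine the problem to a short list but do not by themselves kill it. I expect to close the induction either by running the homotopy-transfer formula for a carefully chosen contracting homotopy of $\mathcal{A}_L$, arranged so that the relevant iterated products of the chosen representatives and their corrections stay inside a small, manifestly formal sub-dga, or by a vanishing statement for the pertinent graded pieces of $HH^{\ge 2}(\Lambda,\Lambda)$---transparent via Koszul duality, since $\Lambda^*(\mathbb{C}^2)$ is the quadratic dual of $\mathbb{C}[x,y]$ and hence admits no grading-compatible $A_{\infty}$-deformations, so that any minimal model of $\mathcal{A}_L$ must be the formal one.
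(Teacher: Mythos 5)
Your computation of $H(\mathcal{A}_L)$ as a ring is correct and matches what the paper uses (the paper defers the full proof to \cite{Paulcategorical}, but the substance of its argument is the explicit quasi-isomorphism $G:\Lambda\to\mathcal{A}_L$ written out in Step 1 of Section 4.3). The formality half of your proposal, however, has a genuine gap, and one of your two proposed ways of closing it cannot work. Intrinsic formality of $\Lambda=\Lambda^*(\mathbb{C}^2)$ \emph{fails}: the graded pieces of $HH^{*}(\Lambda,\Lambda)$ that would have to vanish are exactly nonzero. Under HKR/Koszul duality these groups are formal polyvector fields on the plane, and the deformation classes corresponding to bivector fields $f(p,q)\,\partial_p\wedge\partial_q$ sit precisely in the ``balanced'' bidegrees that your weight-and-degree argument does not kill; they are realized by genuinely non-formal minimal $A_{\infty}$-structures on $\Lambda$. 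Indeed the whole paper turns on the non-vanishing of these groups in the balanced range: the automorphism $(p,q)\mapsto(pe^{pq},qe^{-pq})$ of Theorem 3 is the exponential of a nonzero Hochschild class of exactly this kind. So your option (b) is a dead end, and formality of $\mathcal{A}_L$ must come from an actual construction, not from an obstruction-group vanishing.

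Your option (a) is the right idea but is not carried out, and the required choice is quite specific. What the paper does: with $x=(0,1,0)$, $y=(0,0,\eta)$, $\int_D\eta=1$, choose a primitive $\tilde\xi\in\Omega^1(S)$ of $\iota_{n,*}\eta+\iota_{s,*}\eta$ that is invariant under the antipodal map, $A^*\tilde\xi=\tilde\xi$, and set $\xi=(\tilde\xi,0,0)$. Then $d\xi=-xy-yx$, $\xi y=y\xi=0$ (automatically: a $3$-form on the $2$-disc), and $x\xi+\xi x=0$ (because $\iota_s=A\circ\iota_n$ forces $\iota_n^*\tilde\xi=\iota_s^*\tilde\xi$). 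These identities make the map with $G_1(1)=1$, $G_1(a)=x$, $G_1(b)=y$, $G_1(a\wedge b)=-yx$, $G_2(a,b)=\xi$, $G_2(a,a\wedge b)=x\xi$, and all other components zero, close up into an $A_{\infty}$-quasi-isomorphism with \emph{no} corrections beyond $G_2$; this single finite verification replaces your open-ended induction. Without exhibiting such a symmetric primitive (or an equivalent contracting homotopy making the transferred products terminate), your proof of formality is incomplete.
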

\begin{remark}\label{rmkklein}
In the negative double point case, the underlying $\mathbb{Z}_2$ graded vector space is: \begin{equation*}
\Omega^*(S^2)\oplus\Omega^*(D)\oplus\Omega^*_{\text{cpct}}(D).
\end{equation*}
The only difference in the product structure is that the embeddings of the disk are both assumed to be orientation preserving. The formula is the same. Of course this drastically changes everything. The resulting dga is again formal and its homology is $\mathbb{C}[x,y]/(x^2,y^2)$, where everything has even degree. 
\end{remark}

\subsection{The action}\label{ssact}

Let us start with an algebraic definition. Let $\mathcal{A}$ be an $A_{\infty}$-algebra. We call two $A_{\infty}$-quasi-isomorphisms $f,g:\mathcal{A}\to\mathcal{A}$ \textbf{homotopy conjugate} if there exists  $A_{\infty}$-quasi-isomorphisms $h,h':\mathcal{A}\to\mathcal{A}$, which are homotopy inverses of each other, such that $f$ is homotopic to $h'gh$. In the c-unital context that we are working in, this is equivalent to the existence of a homotopy commutative diagram for a quasi-isomorphism $h:\mathcal{A}\to\mathcal{A}$:
 \begin{align}
\xymatrix{
\mathcal{A} \ar[d]^f \ar[r]^h &\mathcal{A} \ar[d]^g\\
\mathcal{A} \ar[r]^{h} &\mathcal{A}}
\end{align}.

If $\Psi:X\to X$ is a symplectomorphism which fixes an embedded Lagrangian $K$, then it gives a, well-defined up to homotopy conjugation, action on the Floer cochain groups of $K$, $\Psi_*: CF(K,K)\to CF(K,K)$. Let us spell this out in more detail. The strategy is to create an $A_{\infty}$-category $\mathcal{K}$ with $\mathbb{Z}$ many objects, denoted by $K_n$, where each object geometrically corresponds to the same Lagrangian $K$, and morphisms and structure maps are obtained by Floer theory. This requires a certain amount of choices, and the goal is to make those choices such that $\phi$ induces a strict $A_{\infty}$-functor $\mathcal{K}\to\mathcal{K}$, in other words, a $\mathbb{Z}$-action. Here strict means that only the first order term of the functor is allowed to be non-zero.

We follow Section 10b of \cite{Paulbook} pretty closely. We first make a universal choice of strip like ends. Then, we choose arbitrary Floer data for $CF(K_0,K_n)$, $n\in\mathbb{Z}$, and define the Floer data for $CF(K_i,K_{i+n})$ by pushing forward the chosen Floer data by $\Psi^i$. These give the morphism spaces in our category. Now we choose regular universal perturbation data for all $d>2$ pointed discs with integer labels such that the analogous equivariance is satisfied, by induction on $d$. More precisely, for each new $d$, we deal with the moduli spaces with the label $0$ to the left of the output first, satisfying the consistency and compatibility conditions, and then extend this to the other moduli spaces with $d$ marked points using powers of $\Psi$ (the conditions are automatically satisfied for those). We then define the structure maps of our category as in \cite{Paulbook}. 

Hence we constructed an $A_{\infty}$-category with:

\begin{itemize}
\item a strict $\mathbb{Z}$-action such that the orbit of one of the objects gives all the objects of the category
\item any two objects of the category are quasi-isomorphic
\end{itemize}

Let us call such a category an \textbf{$A_{\infty}$-groupoid with a strict transitive $\mathbb{Z}$-action}. Given such a category $\mathcal{K}$, any $A_{\infty}$-algebra that is quasi-isomorphic to the endomorphism $A_{\infty}$-algebra of an object of $\mathcal{K}$ is called an \textbf{algebra model}.

\begin{lemma}\label{lemmaaction}
Let $\mathcal{K}$ be an $A_{\infty}$-groupoid with a strict transitive $\mathbb{Z}$-action and $\mathcal{A}$ be an algebra model for $\mathcal{K}$. Then we obtain an $A_{\infty}$ quasi-isomorphism $\mathcal{A}\to\mathcal{A}$, that is well-defined up to homotopy conjugacy.
\end{lemma}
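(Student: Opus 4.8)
The plan is to produce a canonical self-quasi-isomorphism of $\mathcal{A}$ out of the datum of the strict $\mathbb{Z}$-action on $\mathcal{K}$, and then to check that any two choices made along the way differ only by homotopy conjugation. First I would reduce to the case where $\mathcal{A}$ is the endomorphism algebra of a fixed object, say $\mathcal{A}' := \mathrm{hom}_{\mathcal{K}}(K_0,K_0)$, since the passage between any two algebra models is by definition through a zig-zag of $A_\infty$-quasi-isomorphisms, and homotopy conjugation is clearly compatible with conjugating by such a zig-zag (using $c$-unitality, homotopy inverses exist). So it suffices to build the action on $\mathcal{A}'$ and establish that its homotopy conjugacy class is independent of the choice of object $K_0$ and of the identifications.

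Next, the strict generator $T$ of the $\mathbb{Z}$-action is an $A_\infty$-functor $\mathcal{K}\to\mathcal{K}$ with $T(K_n)=K_{n+1}$; restricting its linear term gives an isomorphism of $A_\infty$-algebras $t\colon \mathrm{hom}_{\mathcal{K}}(K_0,K_0)\xrightarrow{\ \sim\ } \mathrm{hom}_{\mathcal{K}}(K_1,K_1)$ (strictness means there are no higher terms to worry about, so this really is a strict isomorphism). To land back in $\mathcal{A}'=\mathrm{hom}_{\mathcal{K}}(K_0,K_0)$ I would compose with an $A_\infty$-quasi-isomorphism $c\colon \mathrm{hom}_{\mathcal{K}}(K_1,K_1)\to\mathrm{hom}_{\mathcal{K}}(K_0,K_0)$ induced by a choice of quasi-isomorphism $K_1\to K_0$ in $\mathcal{K}$, which exists because $\mathcal{K}$ is an $A_\infty$-groupoid: concretely, pick a cohomologically invertible morphism $e\in\mathrm{hom}^0_{\mathcal{K}}(K_1,K_0)$ and a quasi-inverse $e'\in\mathrm{hom}^0_{\mathcal{K}}(K_0,K_1)$, and let $c$ be (a homotopy unit-corrected version of) $a\mapsto \mu_3(e,a,e')$, or more cleanly take $c$ to be the composition with the Yoneda-type functor; in any case such a $c$ is a quasi-isomorphism. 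Define $\Psi_* := c\circ t\colon \mathcal{A}'\to\mathcal{A}'$. This is visibly an $A_\infty$-quasi-isomorphism.

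Then I would prove well-definedness up to homotopy conjugacy. Two sources of ambiguity: the choice of quasi-isomorphism $K_1\to K_0$ in $\mathcal{K}$, and the implicit choice of basepoint object $K_0$. For the first, any two such choices $e_0,e_1$ are homotopic through quasi-isomorphisms (the space of quasi-invertible morphisms between two quasi-isomorphic objects is connected in the relevant homotopical sense, because $H^0\mathrm{hom}_{\mathcal{K}}(K_1,K_0)\cong H^0\mathcal{A}'$ and invertibility is an open condition on units), so the induced $c_0,c_1$ differ by precomposition with an inner automorphism of $\mathcal{A}'$ up to homotopy, which changes $\Psi_*$ by homotopy conjugation. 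For the second, changing $K_0$ to $K_m$ conjugates everything by the quasi-isomorphism $\mathcal{A}'\to\mathrm{hom}_{\mathcal{K}}(K_m,K_m)$, again a homotopy conjugation. I expect the main obstacle to be the bookkeeping of homotopy-coherence here: one must verify that all these "differs by an inner quasi-automorphism" statements really assemble, at the $A_\infty$-level and not just cohomologically, into honest homotopy conjugacies, and this is most cleanly done by working inside the category of $A_\infty$-functors $\mathcal{K}\to\mathcal{K}$ (or equivalently bimodules) where conjugation by $T$ and by the internal quasi-isomorphisms are both manifestly natural, and then applying the linear-term restriction functor. Once phrased that way, the statement becomes essentially formal, and I would cite the analogous discussion in Section 10b of \cite{Paulbook} for the coherence of the inductive construction, noting that the only new input is that a transitive groupoid structure lets us transport the self-map back to a single algebra.
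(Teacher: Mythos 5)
Your overall strategy is essentially the one the paper has in mind: reduce to $\mathcal{A}' = \mathrm{hom}_{\mathcal{K}}(K_0,K_0)$, note that the strict generator of the $\mathbb{Z}$-action restricts to a strict isomorphism $t\colon \mathrm{hom}(K_0,K_0)\to\mathrm{hom}(K_1,K_1)$, transport back via a quasi-isomorphism coming from the groupoid structure, and check well-definedness up to homotopy conjugacy. The paper compresses exactly these steps into a citation of Theorem~2.9 of \cite{Paulbook} (quasi-equivalences of $c$-unital $A_\infty$-categories admit quasi-inverses, so the inclusion of the full one-object subcategory on $K_0$ has a homotopy inverse) together with the transfer lemmas of Subsection~\ref{sstransfer}, which supply the independence of the resulting self-map of $\mathcal{A}$ from the chosen identifications.

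One step of your well-definedness argument is not right as written. The assertion that the space of cohomologically invertible elements of $\mathrm{hom}^0_{\mathcal{K}}(K_1,K_0)$ is ``connected in the relevant homotopical sense'' because $H^0\mathrm{hom}(K_1,K_0)\cong H^0\mathcal{A}'$ and invertibility is open does not follow: the invertibles in a finite-dimensional algebra need not be connected, and even connectedness of that set would not by itself produce an $A_\infty$-homotopy between the induced transport maps $c_0,c_1$ — indeed, if true in the strong form you state it would show $\Psi_*$ is well-defined up to homotopy, which is strictly more than the lemma claims. The actual mechanism is that two quasi-inverses of the inclusion are naturally equivalent as $A_\infty$-functors, which at the level of the endomorphism algebra of the single object translates to pre- or post-composition with conjugation by a unit in $H^0\mathcal{A}'$; one then needs to say why this qualifies as a homotopy conjugacy in the paper's precise sense. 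That step is automatic when units of $H^0\mathcal{A}'$ are central (as in the intended application, where $H^0\mathcal{A}'\cong\mathbb{C}$), and your closing suggestion — to work inside the $A_\infty$-functor category and only then restrict, citing the coherence of Section~10b of \cite{Paulbook} — is the correct way to make this rigorous and is effectively what the paper is appealing to. So: same approach, but replace the connectedness argument with the naturality-to-conjugation dictionary.
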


\begin{proof}
This follows immediately from Theorem 2.9 in \cite{Paulbook}, and the basic lemmas of the next subsection.
\end{proof}

\begin{remark}
For a carefully chosen model, one might be able to reduce the ambiguity to only homotopy. Yet, this seems less natural, and also not more helpful in terms of its practical use, as it seems unlikely that the statement of Theorem \ref{thmimm} can be made any stronger.
\end{remark}

This picture carries over to the context of the Lagrangian nodal sphere immediately because an equatorial Dehn twist acts trivially near the double point. Hence for $\Phi:M\to M$ extending an equatorial Dehn twist of $L$, we get $\Phi_*: CF(L,L)\to CF(L,L)$, well defined up to homotopy conjugation.

\begin{theorem}\label{thmimm}
There exists a homotopy commutative diagram,
 \begin{align}\label{immersednaturality}
\xymatrix{
\mathcal{A}_L \ar[d]^{(\phi^{-1})^*} \ar[r] &CF(L,L) \ar[d]^{\Phi_*}\\
\mathcal{A}_L \ar[r] &CF(L,L)}
\end{align} 
where the horizontal arrows are given by the same $A_{\infty}$-quasi-isomorphism.
\end{theorem}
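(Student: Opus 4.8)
The plan is to realize the square (\ref{immersednaturality}) as the restriction to the endomorphism algebra of a single object of a $\mathbb{Z}$-equivariant $A_{\infty}$-equivalence between two $A_{\infty}$-groupoids with strict transitive $\mathbb{Z}$-actions, one geometric and one topological. This is precisely the mechanism of Section 10b of \cite{Paulbook} applied to the comparison underlying Theorem \ref{thmmodel}. On the geometric side I take the $A_{\infty}$-groupoid $\mathcal{L}$ with objects $\{L_n\}_{n\in\mathbb{Z}}$ and strict $\mathbb{Z}$-action generated by $\Phi$, built exactly as in Subsection \ref{ssact}: choose universal strip-like ends, choose Floer and perturbation data freely and regularly on the moduli spaces all of whose inputs and output carry the label $0$, subject to the consistency and compatibility conditions, and transport everything to the remaining labels by powers of $\Phi$. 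On the topological side I use that an equatorial Dehn twist $\phi$ is supported away from the double point, so $(\phi^{-1})^*$ commutes with $\iota_{n,*},\iota_{s,*},\iota_n^*,\iota_s^*$ and hence is a \emph{strict} dga automorphism of $\mathcal{A}_L$; then I build an $A_{\infty}$-groupoid $\mathcal{D}$ with objects $\{D_n\}_{n\in\mathbb{Z}}$, each with endomorphism dga $\mathcal{A}_L$, morphism complexes between distinct objects given by $\mathcal{A}_L$ with bimodule structure pre- and post-composed with the appropriate powers of $(\phi^{-1})^*$, and strict $\mathbb{Z}$-action generated by $(\phi^{-1})^*$. Both $\mathcal{A}_L$ and $CF(L,L)$ are algebra models in the sense of Subsection \ref{ssact}, and Lemma \ref{lemmaaction} packages the induced self-maps $(\phi^{-1})^*$ and $\Phi_*$; the content of the theorem is that a fixed comparison quasi-isomorphism intertwines them up to homotopy.

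Second, I would produce that comparison $\mathbb{Z}$-equivariantly by passing through the Morse model $\mathcal{M}_L$ used in the proof of Theorem \ref{thmmodel}. Concretely, I set up a third $A_{\infty}$-groupoid $\mathcal{M}$ with a strict $\mathbb{Z}$-action — the action on the Morse side coming from pushing forward a Morse--Smale pair by $\phi$, which makes sense because $\phi$ can be taken to be the identity near the poles and hence to preserve the local Morse--Smale data there — together with a PSS-type $A_{\infty}$-functor $\mathcal{M}\to\mathcal{L}$ and a differential-topological $A_{\infty}$-functor $\mathcal{M}\to\mathcal{D}$. All the auxiliary data defining these functors (strip-like ends, Morse functions and metrics, the interpolating moduli problems, the perturbation data) are chosen by the same inductive recipe: free and regular on the label-$0$ strata subject to consistency, then transported by powers of $\Phi$ and $\phi$. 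Transport preserves regularity and consistency automatically, so one obtains honest $\mathbb{Z}$-equivariant functors, strict in the $\mathbb{Z}$-direction. Inverting the first functor up to homotopy and composing with the second gives a (non-explicit) $A_{\infty}$-quasi-isomorphism $\mathcal{A}_L\to CF(L,L)$; restricting the resulting equivariant functor to the orbit of the pair $D_0\leftrightarrow L_0$, and using that $(\phi^{-1})^*$ and $\Phi_*$ are the generators of the two $\mathbb{Z}$-actions, yields exactly the homotopy-commutative square (\ref{immersednaturality}) with the same arrow on top and bottom, by Theorem 2.9 of \cite{Paulbook}.

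The main obstacle is the one flagged in the introduction: arranging the label-$0$ data to be simultaneously regular and $\mathbb{Z}$-equivariantly extendable in the \emph{immersed} setting, where the moduli spaces defining the comparison also count configurations with boundary jumps at the double point $p$. The asphericity hypothesis $\omega\mid_{\pi_2(M,L)}=0$ together with strong exactness controls energy and furnishes the compactness and transversality needed, as in \cite{Abouzaidtopological}; and, crucially, since both $\Phi$ and $\phi$ are the identity near $p$, the new double-point-supported phenomena live in a region on which the two $\mathbb{Z}$-actions act trivially, so there is no genuine tension between equivariance and genericity there, and the remaining work is bookkeeping. A secondary point, needed to get honest equivariance rather than equivariance up to coherent homotopy, is that $\phi$ can be chosen to literally preserve the chosen Morse--Smale data near the poles and to fix the embeddings $\iota_n,\iota_s$, which is possible because a Dehn twist can be taken to be the identity on an arbitrarily large neighborhood of the poles. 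The construction of the three groupoids and the two equivariant functors, and the verification of these points, is carried out in the Appendix.
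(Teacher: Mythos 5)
Your overall strategy coincides with the paper's: build $A_{\infty}$-groupoids with strict transitive $\mathbb{Z}$-actions on both the geometric and the topological side, connect them through a Morse model by strictly $\mathbb{Z}$-equivariant $A_{\infty}$-functors, and read off the homotopy-commutative square from Lemma \ref{lemcom} and Lemma \ref{lemtriv}. But there is a genuine gap in the equivariance of the Floer $\to$ Morse step. You only arrange for $\Phi$ and $\phi$ to be the identity near the double point $p$. That is not enough: the Abouzaid comparison functor is built from moduli problems (mushroom maps) that live in a Weinstein neighborhood of \emph{all} of $L$, and for the transport of perturbation data by powers of $\Phi$ on the Floer side to be literally compatible with the transport by powers of $\phi$ on the Morse side, one needs $\Phi$ to coincide with the standard symplectic lift of $\phi$ on that whole Weinstein neighborhood. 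The paper isolates exactly this as the new geometric ingredient: it extends Steps 1 and 2 of the proof of Theorem \ref{thm1} to Hamiltonian isotope $\Phi$ so that it looks like the standard symplectic lift of the equatorial Dehn twist near all of $L$, not just near $p$. Your remark that the twist ``can be taken to be the identity on an arbitrarily large neighborhood of the poles'' normalizes only $\phi$; it does not produce the required normalization of the ambient $\Phi$.

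A second omission is structural rather than geometric. To run the analysis of \cite{Abouzaidtopological} for mushroom maps in the immersed setting, the paper replaces the usual generators of $CF(L,L)$ coming from short Hamiltonian chords near $p$ (the ``strange chords'') by two generators sitting literally at the double point, labeled by the branch jump, and then shows (via an equivariant, parametrized moduli argument) that the resulting $A_{\infty}$-algebra is $\Phi$-equivariantly quasi-isomorphic to the usual one. This modification is what makes the $\mathbb{Z}_2$-labeling of stems and caps of mushrooms — the device controlling compactness in Lemmas 2.5 and 5.18 of \cite{Abouzaidtopological} — well defined and consistent with the $\mathbb{Z}$-labeling needed for equivariance. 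Your proposal does not address this, so the claim that ``the remaining work is bookkeeping'' undersells the content. A minor further difference: the paper passes through Simplicial and Singular models between Morse and DeRham because that is how \cite{Abouzaidtopological} is organized; your shortened chain Floer $\leftarrow$ Morse $\to$ DeRham would require a direct equivariant Morse-to-DeRham comparison that you would need to supply.
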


\begin{proof}
See the Appendix.
\end{proof}

\begin{corollary}
The induced action $HF(L,L)\to HF(L,L)$ is trivial.
\end{corollary}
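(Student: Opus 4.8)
The plan is to read off the statement directly from Theorem \ref{thmimm}. The homotopy commutative square (\ref{immersednaturality}) shows that, passing to cohomology, the induced map $\Phi_*$ on $HF(L,L)$ is conjugate (by the isomorphism induced by the horizontal quasi-isomorphism $\mathcal{A}_L\to CF(L,L)$) to the map induced by $(\phi^{-1})^*$ on $H(\mathcal{A}_L)$. So the whole content is to check that $(\phi^{-1})^*$ acts as the identity on $H(\mathcal{A}_L)$, and then transport this conclusion back along the cohomology-level isomorphism.

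The key steps, in order, are as follows. First I would recall that $\phi$ is an equatorial Dehn twist, hence supported away from the double point and, moreover, isotopic to the identity through \emph{diffeomorphisms} of $L$ (the Dehn twist along a separating curve on $S^2$ is smoothly isotopic to $\mathrm{id}$, forgetting symplectic constraints — this is exactly Remark \ref{rmktrivial}). Consequently $\phi$ acts trivially on $H^*(L)$; but here we need more, namely that $(\phi^{-1})^*$ acts trivially on the cohomology of the \emph{Abouzaid model} $\mathcal{A}_L$, which is $\Lambda^*(\mathbb{C}^2)$ with its two odd generators. Second, I would observe how an equatorial Dehn twist acts on $\mathcal{A}_L$: since $\phi$ is supported away from the double point, it fixes the embeddings $\iota_n,\iota_s:D\to S$ and their images, so $(\phi^{-1})^*$ acts by the usual pullback of forms on the $\Omega^*(S^2)$ summand and trivially on the two disc summands $\Omega^*(D)[-1]$ and $\Omega^*_{\mathrm{cpct}}(D)[1]$. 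Third, since a diffeomorphism of $S^2$ isotopic to the identity induces the identity on $H^*(S^2)$, and the other two summands of $\mathcal{A}_L$ contribute the two classes that generate $H^1(D)\cong H^1_{\mathrm{cpct}}(D)$ part of $\Lambda^1(\mathbb{C}^2)$ — on which $(\phi^{-1})^*$ is literally the identity cochain map — we conclude that $(\phi^{-1})^*$ is the identity on $H(\mathcal{A}_L)\cong\Lambda^*(\mathbb{C}^2)$. Fourth, feeding this into the homotopy commutative diagram of Theorem \ref{thmimm} and taking cohomology gives that $\Phi_*$ is conjugate to the identity on $HF(L,L)$, hence equal to the identity.

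I do not expect a serious obstacle here: the corollary is essentially a formal consequence of Theorem \ref{thmimm} once one unwinds how an equatorial twist acts on the three summands of $\mathcal{A}_L$. The only point requiring a moment's care is the bookkeeping of the two disc summands: one must note that an equatorial Dehn twist, being supported away from $n$ and $s$, genuinely fixes the local data $\iota_n,\iota_s$ used to define $\mathcal{A}_L$, so that $(\phi^{-1})^*$ restricts to the identity on $\Omega^*(D)[-1]\oplus\Omega^*_{\mathrm{cpct}}(D)[1]$ on the nose and acts by honest pullback only on $\Omega^*(S^2)$. (If one wanted to be pedantic one could also invoke that the space of such complementary/local choices is connected, so the precise choice of $\iota_n$ is immaterial up to homotopy.) Everything else is the standard fact that an orientation-preserving diffeomorphism of a sphere acts trivially on its de Rham cohomology.

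Note that this also makes Theorem \ref{thm2} sharp in the expected way: triviality on $HF(L,L) = H(\mathcal{A})$ is automatic, so the interesting content of Theorems \ref{thm2} and \ref{thm3} is precisely that $\Phi_*$ is \emph{not} homotopic to the identity as an $A_\infty$-automorphism, detected by the formal automorphism (\ref{exponential}).
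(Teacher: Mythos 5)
Your argument is correct and is, in substance, the paper's proof: the paper simply asserts that one can exhibit cocycles in $\mathcal{A}_L$ generating the cohomology that are strictly fixed by $(\phi^{-1})^*$, and your decomposition into the three summands is exactly how one sees this (a constant on $S^2$ for degree $0$, classes from the two disc summands for degree $1$, and a $\phi$-invariant area form — which exists since an equatorial twist can be taken area-preserving, or alternatively one just notes an orientation-preserving diffeomorphism acts by $\mathrm{id}$ on $H^2(S^2)$ — for degree $2$), combined with Theorem \ref{thmimm}. One small slip: the degree-$1$ classes come from $H^0(D)\cong\mathbb{C}$ (after the $[-1]$ shift) and $H^2_{\mathrm{cpct}}(D)\cong\mathbb{C}$ (after the $[1]$ shift), not from $H^1(D)$, which vanishes — but this does not affect the argument, since $(\phi^{-1})^*$ is the literal identity on both disc summands regardless.
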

\begin{proof}
We remind the reader that there is nothing exotic about the differential on $\mathcal{A}_L$. With that in mind, one can easily find cocycles in $\mathcal{A}_L$ of which cohomology classes give a basis for homology and which are strictly fixed by $(\phi^{-1})^*$. 
\end{proof}

\subsection{Homotopy transfer}\label{sstransfer}

As was commented on before $\mathcal{A}_L$ is formal. Hence we have (non-canonical!) $A_{\infty}$-quasi-isomorphisms, $F:\mathcal{A_L}\to \Lambda$ and $G:\Lambda\to \mathcal{A}_L$, which are (two sided) homotopy inverses of each other. 

We then define the transfer quasi-isomorphism $\Lambda\to\Lambda$ to be $F(\phi^{-1})^*G$, i.e. as in the diagram:
\begin{align*}
\xymatrix{
\mathcal{A}_L \ar[d]^{(\phi^{-1})^*}  &\Lambda\ar[l]^G \ar[d]\\
\mathcal{A}_L \ar[r]^F &\Lambda}
\end{align*} 

More generally we have a map $Tr_{F,G}: Aut_{A_{\infty}}(\mathcal{A}_L)\to Aut_{A_{\infty}}(\Lambda)$.
\begin{lemma}
Assume that we are given a diagram:
\begin{align*}
\xymatrix{
\mathcal{A}\ar[d]^{T'}\ar[r]^{G'}&\mathcal{B} \ar[d]^{H} &\mathcal{A}\ar[l]^G \ar[d]^T\\\mathcal{A}&
\mathcal{B} \ar[r]^F\ar[l]^{F'} &\mathcal{A}}
\end{align*} 
such that the left and the right squares are homotopy commutative; and also $F$ and $F'$ are homotopy inverses of $G$ and $G'$ respectively, then $T$ is homotopy conjugate to $T'$.
\end{lemma}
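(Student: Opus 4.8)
The plan is to verify the statement by purely algebraic manipulations with $A_\infty$-quasi-isomorphisms, using that homotopy of $A_\infty$-maps is a congruence for composition and that quasi-isomorphisms of $c$-unital $A_\infty$-algebras are invertible up to homotopy (Theorem 2.9 of \cite{Paulbook}). The target is to show $T$ is homotopy conjugate to $T'$, i.e. that there exists a quasi-isomorphism $h:\mathcal{A}\to\mathcal{A}$ with $h\circ T'$ homotopic to $T\circ h$ (equivalently a homotopy commutative square with vertical arrows $T',T$ and horizontal arrows $h$). The natural candidate is $h:=F\circ G'$. Note $h$ is a composite of quasi-isomorphisms, hence a quasi-isomorphism.

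First I would record the two hypotheses as homotopies: $H\circ G' \simeq G\circ T'$ from the left square, and $F\circ H \simeq T\circ F$ from the right square (or $H\circ G \simeq G'\circ T'$ / $F'\circ H\simeq T'\circ F'$ in whichever orientation is convenient — I will fix the convention to match the arrow directions in the displayed diagram). Then I would compute
\begin{align*}
T\circ h = T\circ F\circ G' \simeq F\circ H\circ G' \simeq F\circ G\circ T'.
\end{align*}
So it remains to see that $F\circ G\circ T' \simeq h\circ T' = F\circ G'\circ T'$, for which it suffices to know $F\circ G \simeq F\circ G'$. This is where I would use the homotopy-inverse hypotheses: $F\circ G \simeq \mathrm{id}_{\mathcal{A}}$ since $F$ is a homotopy inverse of $G$, and $F'\circ G'\simeq \mathrm{id}_{\mathcal{A}}$; but I actually want $F\circ G'$, not $F'\circ G'$. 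To bridge this I would observe that $F$ and $F'$ are both homotopy inverses of quasi-isomorphisms landing in $\mathcal{A}$, but of possibly different maps $G,G'$, so they need not be homotopic to each other. The clean fix: $h = F\circ G'$ is automatically a quasi-isomorphism, and I only need the single homotopy $F\circ H\circ G' \simeq F\circ G\circ T'$ on one side and $T\circ F\circ G'\simeq F\circ H\circ G'$ on the other, chaining
\begin{align*}
T\circ h = T\circ F\circ G' \simeq F\circ H\circ G' \simeq F\circ G\circ T' \simeq F\circ G'\circ T' = h\circ T',
\end{align*}
where the last homotopy uses $G\circ T'\simeq G'\circ T'$. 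That last step is really the content: from the left square $H\circ G'\simeq G\circ T'$ and (using the other orientation of the left square or the homotopy-inverse relations) one also gets $H\circ G'\simeq ?$; the cleanest route is to post-compose $H\circ G'\simeq G\circ T'$ by $F$ and separately note $F\circ H\simeq T\circ F$, avoiding any need to compare $G$ with $G'$ directly.

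The main obstacle — and the step I would spend the most care on — is getting the variances and which-square-gives-which-homotopy exactly right, since the displayed diagram has $F,F'$ pointing into $\mathcal{A}$ and $G,G'$ pointing out, so "homotopy commutative" has to be unwound carefully before any chaining is legitimate; a sign or direction error here silently breaks the argument. A secondary technical point is justifying that one may compose homotopies of $A_\infty$-maps with fixed $A_\infty$-maps on either side and still obtain a homotopy (left/right whiskering), and that homotopy of $A_\infty$-morphisms is transitive; both are standard and can be cited from \cite{Paulbook} or \cite{Kellerintro}. Once the orientations are pinned down, the proof is a three-line diagram chase exactly as above, and I would present it as such, remarking that $c$-unitality is what guarantees the relevant homotopy inverses exist so that $h=F\circ G'$ is genuinely an $A_\infty$-quasi-isomorphism and the conclusion matches the definition of homotopy conjugacy given at the start of Subsection \ref{ssact}.
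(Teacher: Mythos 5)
Your choice of conjugator $h := F\circ G'$ is exactly the one the paper uses, so the strategy is right, but the diagram chase contains a genuine error: you've conflated $G$ with $G'$ when extracting a homotopy from the left square. The left square involves $T'$, $G'$, $H$, $F'$ only; it gives $F'\circ H\circ G'\simeq T'$, and combining with $G'\circ F'\simeq\mathrm{id}_{\mathcal{B}}$ yields $H\circ G'\simeq G'\circ T'$ — not $H\circ G'\simeq G\circ T'$ as you wrote. There is no relation in the hypotheses that produces a homotopy between $G\circ T'$ and $G'\circ T'$, which is precisely what you need for the last (spurious) step $F\circ G\circ T'\simeq F\circ G'\circ T'$, and you correctly sense this is where the argument strains but do not resolve it.

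Once the letters are fixed, the intermediate term $F\circ G\circ T'$ disappears and the chain closes immediately: from the right square, $F\circ H\circ G\simeq T$ together with $G\circ F\simeq\mathrm{id}_{\mathcal{B}}$ gives $F\circ H\simeq T\circ F$, whence
\begin{equation*}
T\circ h \;=\; T\circ F\circ G'\;\simeq\; F\circ H\circ G'\;\simeq\; F\circ G'\circ T'\;=\;h\circ T',
\end{equation*}
with the second homotopy using $H\circ G'\simeq G'\circ T'$. This is equivalent to the paper's one-line proof, which writes $T'\simeq F'HG'$ and conjugates by $F\circ G'$ (with homotopy inverse $F'\circ G$), simplifying via $G'F'\simeq\mathrm{id}$ and $FG\simeq\mathrm{id}$ to reach $FHG\simeq T$; your presentation is a reformulation of the same computation, just with the $G/G'$ slip that should be repaired before it is correct.
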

\begin{proof} This follows immediately from preservation of homotopy under left and right compositions:
$T'\sim F'HG'\sim FG'F'HG'F'G\sim FHG\sim T$, where $\sim$ denotes homotopy conjugate.
\end{proof}
The following lemma is proved in a very similar manner.

\begin{lemma} $Tr_{F,G}$ preserves homotopy conjugacy.
\end{lemma}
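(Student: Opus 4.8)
The plan is to deduce this from the preceding lemma (call it Lemma~A for the purpose of this sketch), by feeding it an appropriate diagram; equivalently, one can redo the short homotopy bookkeeping of its proof. Suppose $T,T'\in Aut_{A_\infty}(\mathcal{A}_L)$ are homotopy conjugate, so there is a quasi-isomorphism $h\colon \mathcal{A}_L\to\mathcal{A}_L$ with homotopy inverse $h'$ such that $T$ is homotopic to $h'T'h$. Recall that $Tr_{F,G}(T)=FTG$ and $Tr_{F,G}(T')=FT'G$, viewed as $A_\infty$-quasi-isomorphisms $\Lambda\to\Lambda$, and that $F$ and $G$ are two-sided homotopy inverses of each other.

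\textbf{Key steps.} First I would apply Lemma~A with $\mathcal{A}:=\Lambda$, $\mathcal{B}:=\mathcal{A}_L$, $H:=T'$, with $G$ and $F$ as given, and with $G':=hG$, $F':=Fh'$. The right square reads $FHG=FT'G=Tr_{F,G}(T')$, so it is commutative on the nose. The left square reads $F'HG'=Fh'T'hG$; since $h'T'h$ is homotopic to $T$ and homotopy is preserved under left and right composition (the basic lemmas of Subsection~\ref{sstransfer}), this is homotopic to $FTG=Tr_{F,G}(T)$, so the left square is homotopy commutative. It remains to check that $F'$ is a homotopy inverse of $G'$: indeed $F'G'=Fh'hG$ is homotopic to $FG$, hence to $\mathrm{id}_\Lambda$, and $G'F'=hGFh'$ is homotopic to $hh'$, hence to $\mathrm{id}_{\mathcal{A}_L}$, using $hh'\simeq\mathrm{id}$, $h'h\simeq\mathrm{id}$, $GF\simeq\mathrm{id}$, $FG\simeq\mathrm{id}$, and again preservation of homotopy under composition. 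Lemma~A then yields that $Tr_{F,G}(T)$ is homotopy conjugate to $Tr_{F,G}(T')$, as desired.

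\textbf{Alternative and the main point.} One can also argue directly, by inserting $GF\simeq\mathrm{id}_{\mathcal{A}_L}$ twice: $Tr_{F,G}(T)=FTG\simeq F(h'T'h)G=Fh'T'hG\simeq F h'(GF)T'(GF)hG=(Fh'G)\,Tr_{F,G}(T')\,(FhG)$, where $FhG$ and $Fh'G$ are homotopy inverses of each other by the same computation as above. Either way, there is no genuine obstacle here: the statement is a formal consequence of the fact that, in the $c$-unital setting, homotopy and homotopy conjugacy of $A_\infty$-quasi-isomorphisms are preserved under composition (Theorem~2.9 of \cite{Paulbook}) and homotopy inverses exist. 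The only thing to be careful about is doing the bookkeeping with the correct one-sided versus two-sided homotopy-inverse relations, exactly as in the proof of the previous lemma.
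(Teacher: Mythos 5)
Your proposal is correct and matches the paper's intent: the paper simply states this lemma is proved ``in a very similar manner'' to the preceding one, i.e.\ by the same kind of homotopy bookkeeping using $GF\simeq\mathrm{id}$ and $FG\simeq\mathrm{id}$, which is exactly your ``alternative'' direct computation. Your main route (feeding $G'=hG$, $F'=Fh'$, $H=T'$ into the previous lemma) is a clean reorganization of that same bookkeeping and is equally valid.
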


For $\Lambda$ (or more generally any $A_{\infty}$-algebra with vanishing differential), one can talk about on the nose conjugacy because quasi-isomorphisms $\Lambda\to\Lambda$ admit unique strict (two sided) inverses.
\begin{lemma}
For $\Lambda$, homotopy conjugacy equivalence relation is the same as the one generated by homotopy and conjugacy.
\end{lemma}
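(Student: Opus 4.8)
The plan is to prove the two inclusions of equivalence relations separately, the non-trivial content being that homotopy conjugacy collapses, once the differential vanishes, to a single honest conjugation followed by a single homotopy.

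First I would check that the equivalence relation generated by homotopy and conjugacy is contained in homotopy conjugacy. Homotopy conjugacy refines homotopy (take $h=h'=\mathrm{id}$), and it refines conjugacy because for $\Lambda$ every quasi-isomorphism $h$ has a strict two-sided inverse $h^{-1}$, which is in particular a homotopy inverse, so $h^{-1}gh$ is homotopy conjugate to $g$. It then remains to see that homotopy conjugacy is itself an equivalence relation; reflexivity and symmetry are immediate, and transitivity follows from the fact that homotopy of ($c$-unital) $A_\infty$-morphisms is preserved under left and right composition (exactly as used in the preceding lemmas): if $f$ is homotopic to $h_1'gh_1$ and $g$ is homotopic to $h_2'kh_2$, then $f$ is homotopic to $(h_1'h_2')k(h_2h_1)$, and $h_2h_1$, $h_1'h_2'$ are again mutually homotopy-inverse quasi-isomorphisms, since e.g. $(h_1'h_2')(h_2h_1)=h_1'(h_2'h_2)h_1$ is homotopic to $h_1'h_1$, which is homotopic to the identity.

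For the reverse inclusion, suppose $f$ is homotopy conjugate to $g$, say $f$ is homotopic to $h'gh$ with $h,h'$ mutually homotopy-inverse quasi-isomorphisms of $\Lambda$. The key observation is that, because the differential on $\Lambda$ vanishes, $h$ admits a unique strict two-sided inverse $h^{-1}$ among $A_\infty$-maps, and $h^{-1}$ is homotopic to $h'$: indeed $h'=h'(hh^{-1})$ is homotopic to $(h'h)h^{-1}$, which is homotopic to $h^{-1}$ because $h'h$ is homotopic to the identity, again using compatibility of homotopy with composition. Substituting, $f$ is homotopic to $h'gh$, which is homotopic to $h^{-1}gh$. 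Thus $f$ is homotopic to the honest conjugate $h^{-1}gh$ of $g$, so $f$ and $g$ are related by one conjugation and one homotopy, hence lie in the equivalence relation generated by homotopy and conjugacy.

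There is no serious obstacle here; the only points requiring care are the invocation of the uniqueness of strict inverses for $\Lambda$ (established just above), keeping track that all maps produced remain quasi-isomorphisms (automatic, since quasi-isomorphisms are closed under composition and, for $\Lambda$, under strict inversion), and using throughout that homotopy of $A_\infty$-morphisms is an equivalence relation compatible with composition.
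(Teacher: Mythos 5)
Your proof is correct and is essentially the argument the paper intends: the paper states this lemma without proof, relying precisely on the two ingredients you use, namely the observation immediately preceding it that quasi-isomorphisms of $\Lambda$ admit unique strict two-sided inverses (so the homotopy inverse $h'$ is homotopic to the strict inverse $h^{-1}$), and the compatibility of homotopy with left and right composition invoked in the neighboring lemmas.
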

\begin{remark}
It is easy to see that the latter equivalence relation can be more succintly described by $F$ and $G$ are equivalent to each other if $F$ is homotopic to a conjugate of $G$.
\end{remark}
\begin{corollary}
As a result of all these constructions, starting from $\Phi:M\to M$, we obtain a well-defined $A_{\infty}$ quasi-isomorphism $\Lambda\to \Lambda$, up to homotopy and conjugacy, for which the map on the first level is the identity.
\end{corollary}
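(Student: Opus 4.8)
The plan is to assemble the pieces that have already been constructed, tracking at each stage what the ambiguity is, and to invoke the formality of $\mathcal{A}_L$ to push everything onto $\Lambda$. First, Theorem \ref{thmimm} provides a homotopy commutative square relating $\Phi_*: CF(L,L)\to CF(L,L)$ to $(\phi^{-1})^*: \mathcal{A}_L\to\mathcal{A}_L$ via a fixed $A_\infty$-quasi-isomorphism $CF(L,L)\to\mathcal{A}_L$; combined with Lemma \ref{lemmaaction}, which tells us $\Phi_*$ is well-defined up to homotopy conjugacy, this shows that $(\phi^{-1})^*$ is itself well-defined up to homotopy conjugacy as an element of $Aut_{A_\infty}(\mathcal{A}_L)$ (here I would use the lemma from Subsection \ref{sstransfer} that horizontal homotopy-commutative squares with mutually homotopy-inverse rows transfer conjugacy classes).

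Next I would apply the transfer map $Tr_{F,G}: Aut_{A_\infty}(\mathcal{A}_L)\to Aut_{A_\infty}(\Lambda)$ built from the formality quasi-isomorphisms $F:\mathcal{A}_L\to\Lambda$ and $G:\Lambda\to\mathcal{A}_L$. Although $F$ and $G$ are non-canonical, the lemma asserting that $Tr_{F,G}$ preserves homotopy conjugacy shows that the homotopy conjugacy class of $Tr_{F,G}((\phi^{-1})^*)$ in $Aut_{A_\infty}(\Lambda)$ is independent of all these choices — any two choices of $(F,G)$ differ by homotopy and composition, and chasing the diagram from the penultimate lemma shows the resulting automorphisms are homotopy conjugate. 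Finally, since $\Lambda$ has vanishing differential, the lemma identifying homotopy conjugacy with the relation generated by homotopy and conjugacy lets me upgrade this to a well-defined conjugacy class under the finer equivalence relation described in the remark: $\psi_1\sim\psi_2$ iff $\psi_1$ is homotopic to a conjugate of $\psi_2$.

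It remains to check the claim about the first level. The Corollary following Theorem \ref{thmimm} shows the induced map on $HF(L,L)\cong H(\mathcal{A}_L)\cong\Lambda$ is the identity, because one can pick cocycles representing a basis of $H(\mathcal{A}_L)$ that are strictly fixed by $(\phi^{-1})^*$. Transferring through $F$ and $G$, the degree-one (linear) term of the resulting $A_\infty$-automorphism of $\Lambda$ is precisely the induced map on cohomology, hence the identity; this is automatic once one recalls that for an $A_\infty$-algebra with zero differential the linear term of any $A_\infty$-endomorphism is a chain map on the cohomology, i.e. an honest linear endomorphism of $\Lambda$.

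The only genuinely substantive input is Theorem \ref{thmimm} (deferred to the Appendix) and the formality of $\mathcal{A}_L$; everything else here is bookkeeping with the elementary lemmas of Subsection \ref{sstransfer}. The main obstacle to watch for is sign and c-unitality consistency when composing the quasi-isomorphisms — one must make sure the horizontal rows in all squares really are the same fixed quasi-isomorphism and its homotopy inverse, so that conjugacy classes (not merely homotopy classes) are what gets transported — but no new geometry is needed beyond what Theorem \ref{thmimm} supplies.
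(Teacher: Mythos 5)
Your proposal is correct and follows the same bookkeeping route the paper leaves implicit (the Corollary is stated ``as a result of all these constructions'' with no separate proof): apply Theorem~\ref{thmimm} and Lemma~\ref{lemmaaction} to transport the homotopy conjugacy class from $CF(L,L)$ to $\mathcal{A}_L$, use the two lemmas of Subsection~\ref{sstransfer} to push it to $\Lambda$ independently of the choice of $(F,G)$, invoke the lemma identifying homotopy conjugacy on $\Lambda$ with the relation generated by homotopy and conjugacy, and read off the first-level statement from the Corollary after Theorem~\ref{thmimm} together with the fact that on a minimal $A_\infty$-algebra the linear term of a quasi-isomorphism is the induced map on cohomology. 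The only cosmetic slip is the phrasing that ``$(\phi^{-1})^*$ is well-defined up to homotopy conjugacy'' --- that map is literally determined by $\phi=\Phi|_L$; what Theorem~\ref{thmimm} actually buys is the identification of its homotopy conjugacy class with that of the Floer-theoretic $\Phi_*$.
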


\section{The formal automorphism of the plane}\label{s3}

Recall that we have the Abouzaid algebra $\mathcal{A}_L$, and the Dehn twist acting on $\mathcal{A}_L$ by pullback. We have also seen that this can be transferred to the minimal model $\phi:\Lambda\to \Lambda$, where $\phi$ is only well-defined up to conjugation and homotopy.

\subsection{Motivation}
We start with the following general situation. Let $f,g:\mathcal{B}\to\mathcal{B}$ be two $A_{\infty}$-automorphisms (i.e. self quasi-isomorphism) of an $A_{\infty}$-algebra $\mathcal{B}$. We want to ask the question of whether $f$ and $g$ are $A_{\infty}$-homotopic to each other. Induced maps on homology is one invariant that could work for this purpose, but as the situation we encountered in this paper shows it may not be enough.

We then go to the main wisdom of Morita theory, and consider modules over $\mathcal{B}$ to analyze the situation. The crucial fact here is that given any $A_{\infty}$-module over $\mathcal{B}$, we can pull it back by any $A_{\infty}$-automorphism and obtain another $A_{\infty}$-module. Moreover if we apply this procedure using $f$ and $g$, and they happen to be homotopic automorphisms, then the obtained modules turn out to be equivalent in the appropriate category. 

Hence one naive strategy would be to consider the moduli space of all $A_{\infty}$-modules over $\mathcal{B}$ up to equivalence (a formidable object), and show that $f$ and $g$ do not act exactly the same on this moduli space. Instead of analyzing the whole action what we end up doing mostly is to observe that $\mathcal{B}$ as an $A_{\infty}$-module over itself is a fixed point of both actions. Hence a more approachable canonical action to analyze is the one on the formal neighborhood of this fixed point.

This formal neighborhood is (tautologically) the space of formal deformations of $\mathcal{B}$ as an $A_{\infty}$-module over itself, and by the general premise of deformation theory it admits a very concrete description in terms of Maurer-Cartan elements, which we will use to make our computation.

Another important point that comes up in applications is the non-uniqueness of the model, in other words we might be in a situation where we want to do the computation in a quasi-isomorphic $A_{\infty}$-algebra $\mathcal{B}'$, and it often happens that we do not have too much control over how identify $\mathcal{B}$ and $\mathcal{B}'$. The formal neighborhood we described is suited well for this purpose. The choice of a quasi-isomorphism (really its homotopy class) between $\mathcal{B}$ and $\mathcal{B}'$ gives us an identification of the formal spaces, which we can use to transfer actions of automorphisms. The upshot is that the conjugacy class of the action of an automorphism on the formal neighborhood is a completely model invariant notion. As a concrete consequence of this observation we point out that in Theorem \ref{thmimm}, we do not need to (and in fact cannot at this point) specify anything about horizontal arrows, i.e. the identification.

\subsection{Maurer-Cartan elements of $\Lambda$}\label{ssMC}

Before we begin, we bring out a confusing point. We have been only assuming that our vector spaces are $\mathbb{Z}_2$-graded, but in fact $\mathcal{A}_L$, $\Lambda$, all the maps we have used, and will construct involving these two $A_{\infty}$-algebras can be made $\mathbb{Z}$-graded (nothing has to be changed in what we wrote). The $\mathbb{Z}$-grading of $\mathcal{A}_L$ is the one we alluded to when we first introduced it (Section 3.1), whereas the one of $\Lambda$ is the one with its generators having degree $1$. Moreover, the computation we are about to undertake makes sense and is exactly the same if we start using these $\mathbb{Z}$-gradings. We will do that from now on, only because it simplifies notation in a couple of places.

We will try to explain our computation staying in the exterior algebra $\Lambda$ as much as we can, since everything is much more elementary there. Notice though that we do not really know much about the transferred automorphism $\Lambda\to \Lambda$, but we have full information about $\mathcal{A}_L\to\mathcal{A}_L$. Hence, we will have to go get the information from there at some point.

For $\mathcal{B}=\Lambda$ or $\mathcal{A}_L$, we define a $\mathcal{B}$-MC-element to be an element $\alpha$ of $\mathcal{B}^1[[p,q]]$, which satisfies, \begin{equation*}d\alpha+\alpha\cdot\alpha=0,\end{equation*}and has constant term equal to zero. In case $\mathcal{B}=\Lambda$, the formal space of such elements is literally the formal neighborhood we talked about above, whereas if $\mathcal{B}=\mathcal{A}_L$, one needs to mod out by what is called the gauge action of $\mathcal{B}^0[[p,q]]$, which we do not go into here.

\begin{lemma}
All  $\Lambda$-MC elements are given by $f(p,q)a+g(p,q)b$ where $a$ and $b$ are a basis for the underlying vector space of $\Lambda$, and $f,g\in\mathbb{C}[[p,q]]$ with zero constant terms.
\end{lemma}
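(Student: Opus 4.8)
The plan is to show that the Maurer--Cartan equation on $\Lambda$ imposes no constraint beyond the one already present in the definition, so that the $\Lambda$-MC elements are exactly the elements of $\Lambda^1[[p,q]]$ with vanishing constant term.

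First I would record the two structural facts about $\mathcal{B}=\Lambda$ that we are entitled to use. Since $\Lambda$ is the formal minimal model of $\mathcal{A}_L$, its differential vanishes, and by the proposition of Abouzaid--Seidel its product is the graded-commutative exterior multiplication on $\Lambda^*(\mathbb{C}^2)$, with generators $a,b$ spanning the two-dimensional space $\Lambda^1$ in degree $1$; the formal parameters $p,q$ are central of even degree. Consequently, for any $\alpha\in\Lambda^1[[p,q]]$ the Maurer--Cartan equation $d\alpha+\alpha\cdot\alpha=0$ collapses to $\alpha\cdot\alpha=0$.

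Next I would simply expand. Because $\Lambda^1$ is two-dimensional with basis $a,b$, every $\alpha\in\Lambda^1[[p,q]]$ is of the form $\alpha=fa+gb$ with $f,g\in\mathbb{C}[[p,q]]$, and the hypothesis that $\alpha$ has vanishing constant term is precisely the condition that $f$ and $g$ lie in the maximal ideal $\mathfrak{m}=\langle p,q\rangle$. Then $\alpha\cdot\alpha=f^2\,a^2+fg\,ab+gf\,ba+g^2\,b^2$; using $a^2=b^2=0$ and $ba=-ab$ (degree-one elements anticommute in the exterior algebra), together with the commutativity $fg=gf$ of the coefficient ring, this equals $(fg-gf)\,ab=0$. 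Hence every such $\alpha$ automatically satisfies the Maurer--Cartan equation, and this together with the tautological reverse inclusion yields the stated description of all $\Lambda$-MC elements.

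The honest remark is that there is essentially no obstacle here: the content of the lemma is exactly that the Maurer--Cartan (equivalently, formal-neighborhood) space of $\Lambda$ is the affine space $\Lambda^1\otimes\mathfrak{m}$, with no curvature or quadratic constraints, which is precisely why it is convenient to run the later computation inside $\Lambda$ rather than inside $\mathcal{A}_L$, where one would additionally have to quotient by the gauge action. The only points requiring mild care are confirming that the differential of the formal model is zero and keeping track of the sign conventions in the dga-versus-$A_\infty$ translation fixed in the introduction; both are routine.
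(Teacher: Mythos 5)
Your proof is correct, and since the paper states this lemma without proof (treating it as immediate), your argument is precisely the intended one: the differential of $\Lambda$ and all higher $A_\infty$-operations vanish because $\Lambda$ is the minimal model of a formal dga, so the Maurer--Cartan equation collapses to $\alpha\cdot\alpha=0$, which holds automatically for $\alpha=fa+gb$ by graded-commutativity of the exterior product together with the commutativity of $\mathbb{C}[[p,q]]$. One minor remark: the lemma's wording ``$a$ and $b$ are a basis for the underlying vector space of $\Lambda$'' is slightly loose (they span only $\Lambda^1$, not all of $\Lambda^*(\mathbb{C}^2)$), and you correctly read it that way since MC elements live by definition in $\Lambda^1[[p,q]]$.
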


We call such a pair $f$ and $g$ a formal mapping of the plane by thinking of them as the corresponding adically continuous algebra homomorphism $\mathbb{C}[[p,q]]\to \mathbb{C}[[p,q]]$, given by $p\mapsto f(p,q)$, $q\mapsto g(p,q)$. Clearly, the invertible elements are the ones with invertible first order maps. We call such a pair a formal automorphism, and denote their set by $FAut_0(\mathbb{C}^2)$. Note that $FAut_0(\mathbb{C}^2)$ is a group, where the multiplication is given by composition. 

We can push-forward MC-elements, which confusingly corresponds to pulling-back modules. More precisely, if $\psi: \mathcal{B} \to \mathcal{B}'$ is an $A_{\infty}$-quasi-isomorphism, where both source and target are either $\Lambda$ or $\mathcal{A}_L$, and $\alpha$ is a $\mathcal{B}$-MC element, then \begin{align}
\psi_*\alpha :=\sum\psi_n(\alpha ,\ldots ,\alpha)\end{align}
is a $\mathcal{B}'$-MC element.

If we have an $A_{\infty}$-map $\phi:\Lambda\to \Lambda$, and an MC element $\alpha= f(p,q)a+g(p,q)b$, the push forward MC element admits a nice description: \begin{align*}
\phi_*\alpha :=& \sum\phi_n(\alpha ,\ldots ,\alpha)\\=&(\sum_{k,l}(\sum_{\sum a_i=k+l, a_0\geq 0}\phi_{k+l}(\underbrace{a,\ldots,a}_{a_0},\underbrace{b,\ldots,b}_{a_1},a\ldots)_a)f^kg^l)a+\\&(\sum_{k,l}(\sum_{\sum a_i=k+l,a_0\geq 0}\phi_{k+l}(\underbrace{a,\ldots,a}_{a_0},\underbrace{b,\ldots,b}_{a_1},a\ldots)_b)f^kg^l)b,
\end{align*}

The following lemma provides the basis of the argument, which is just a reformulation of the computation we just did.

\begin{lemma}\label{lemeasy}
Let $\phi:\Lambda\to \Lambda$ be an $A_{\infty}$-map. Let $\tilde{\phi}: \mathbb{C}[[p,q]]\to \mathbb{C}[[p,q]]$ be the formal mapping corresponding to the pushforward of $pa+qb$. Then the pushforward of any MC element $f(p,q)a+g(p,q)b$ is given by the composition: \begin{equation}
\tilde{\phi}(f(p,q),g(p,q)))a+\tilde{\phi}(f(p.q),g(p,q)))b
\end{equation}
\end{lemma}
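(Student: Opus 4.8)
\textbf{Proof proposal for Lemma \ref{lemeasy}.}

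The plan is to unwind the two displayed expansions immediately preceding the statement and observe that they encode nothing more than substitution into a single power series. First I would fix notation: write $\tilde\phi$ for the formal mapping attached to $\phi_*(pa+qb)$, so that if $\phi_*(pa+qb)=P(p,q)a+Q(p,q)b$ then $\tilde\phi$ is the adically continuous algebra homomorphism $\mathbb{C}[[p,q]]\to\mathbb{C}[[p,q]]$ sending $p\mapsto P(p,q)$, $q\mapsto Q(p,q)$. The content of the lemma is then the identity $\phi_*(fa+gb)=\tilde\phi(f,g)\,a+\tilde\phi(f,g)\,b$ for an arbitrary MC element $fa+gb$; but $\tilde\phi(f,g)$ means $P(f(p,q),g(p,q))$ in the first slot and $Q(f(p,q),g(p,q))$ in the second, i.e. the notation in the display is shorthand for applying the components of $\tilde\phi$ after the substitution $p\mapsto f$, $q\mapsto g$.

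The key computation is the one already written out above the statement: since $\phi$ is $\mathbb{C}$-linear in each entry and $(\mathbb{C}[[p,q]])$-multilinearity lets us pull the coefficient functions $f,g$ out of each $\phi_n$, one has
\begin{equation*}
\phi_*(fa+gb)=\sum_n \phi_n(fa+gb,\dots,fa+gb)=\sum_{k,l}\Bigl(\sum_{\text{$k$ $a$'s, $l$ $b$'s in some order}}\phi_{k+l}(\dots)\Bigr)f^k g^l,
\end{equation*}
and the inner sum of structure-map values, read off in the $a$-component, is exactly the coefficient of $p^kq^l$ in $P(p,q)$, and in the $b$-component exactly the coefficient of $p^kq^l$ in $Q(p,q)$. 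Therefore the $a$-component of $\phi_*(fa+gb)$ is $\sum_{k,l}(\text{coeff of }p^kq^l\text{ in }P)f^kg^l=P(f,g)$ and similarly the $b$-component is $Q(f,g)$. This is precisely the substitution $\tilde\phi(f,g)$ in each slot, which is the assertion. The special case $f=p,\ g=q$ recovers the definition of $\tilde\phi$, so the formula is consistent; and the only convergence point to check is that all the series involved lie in $\mathbb{C}[[p,q]]$, which holds because $f,g$ have zero constant term, so substituting them into a formal power series is an adically well-defined operation and each coefficient of the result is a finite sum.

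The main (and only) obstacle is purely bookkeeping: one must be careful that the combinatorial identification ``sum over all orderings of $k$ copies of $a$ and $l$ copies of $b$ of the structure map, projected to the $a$-component'' really equals the $p^kq^l$-coefficient of the first component of $\phi_*(pa+qb)$, including signs. Since we are working with the $\mathbb{Z}$-grading in which $a,b$ have degree $1$ and $p,q$ are formal variables of degree $0$, and since the MC element $fa+gb$ has degree $1$, no extra Koszul signs are introduced by moving the commuting scalars $f,g$ past the entries, so the sign that appears in $\phi_*(pa+qb)$ for a given ordering is identical to the sign that appears for the same ordering in $\phi_*(fa+gb)$; this is what makes the coefficient extraction legitimate. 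Once this is checked, the lemma follows by inspection, and it is exactly the reformulation of the computation displayed above the statement.
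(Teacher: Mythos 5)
The proposal is correct and follows the same route as the paper, which offers no separate argument but simply declares the lemma ``a reformulation of the computation we just did.'' You merely make explicit the coefficient-extraction step — that the inner sum $\sum\phi_{k+l}(\dots)_a$ appearing in the displayed expansion is by definition the $p^kq^l$-coefficient of the $a$-component of $\phi_*(pa+qb)$ — and correctly observe that no Koszul signs arise when pulling the degree-zero scalars $f,g$ through the structure maps.
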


We define the \textbf{symmetrization map} $S: Aut_{A_{\infty}}(\Lambda)\to FAut_0(\mathbb{C}^2)$, by $S(\phi)=\tilde{\phi}$. Its properties are summarized in the following theorem.

\begin{theorem}
\begin{enumerate}
\item $S$ is a group homomorphism.
\item If $\phi$ and $\phi'$ are homotopic as $A_{\infty}$-maps, then $S(\phi)=S(\phi')$.
\end{enumerate}
\end{theorem}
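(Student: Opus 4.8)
The plan is to verify both statements directly from the defining formula $S(\phi)=\tilde\phi$, where $\tilde\phi$ is the formal mapping obtained by pushing forward the Maurer--Cartan element $pa+qb$ under $\phi$. For part (1), the key observation is that the pushforward operation on MC elements is functorial: if $\phi,\psi:\Lambda\to\Lambda$ are $A_\infty$-maps, then $(\psi\circ\phi)_*\alpha=\psi_*(\phi_*\alpha)$ for any $\Lambda$-MC element $\alpha$. This is the standard fact that the composition of $A_\infty$-morphisms acts on MC elements by composition of the induced transformations on the formal space; it follows from expanding both sides using the composition formula for $A_\infty$-morphisms and comparing the coefficient of each monomial in $p,q$. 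Granting this, I would apply Lemma~\ref{lemeasy} twice: first, $\phi_*(pa+qb)$ corresponds to the formal mapping $\tilde\phi$, so $\phi_*(pa+qb) = \tilde\phi(p,q)\,a + \tilde\phi(p,q)\,b$ in the notation of that lemma; then pushing this forward by $\psi$ and invoking Lemma~\ref{lemeasy} again (with $f,g$ the components of $\tilde\phi$) gives that $(\psi\circ\phi)_*(pa+qb)$ is the formal mapping $\tilde\psi\circ\tilde\phi$. Hence $S(\psi\circ\phi)=\widetilde{\psi\circ\phi}=\tilde\psi\circ\tilde\phi=S(\psi)\circ S(\phi)$, which is exactly multiplicativity since the group operation on $FAut_0(\mathbb{C}^2)$ is composition. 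One should also check $S(\mathrm{id})=\mathrm{id}$, which is immediate since $(\mathrm{id})_*(pa+qb)=pa+qb$.

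For part (2), the point is that homotopic $A_\infty$-maps induce the same transformation on the formal space of MC elements. The cleanest route is the general principle that if $\phi\sim\phi'$ via an $A_\infty$-homotopy, then for any MC element $\alpha$ the pushforwards $\phi_*\alpha$ and $\phi'_*\alpha$ are \emph{gauge equivalent} MC elements; but for $\mathcal{B}=\Lambda$ the differential vanishes, so the gauge action of $\Lambda^0[[p,q]]$ on MC elements is trivial (there is no infinitesimal gauge transformation since $d=0$ kills the relevant term), and gauge-equivalent MC elements are in fact equal. Applying this with $\alpha=pa+qb$ gives $\phi_*(pa+qb)=\phi'_*(pa+qb)$, hence $\tilde\phi=\tilde{\phi'}$, i.e. $S(\phi)=S(\phi')$. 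Alternatively, one can argue more explicitly: an $A_\infty$-homotopy is recorded by a collection of maps $\{h_n\}$, and a direct computation — organized by the filtration by polynomial degree in $p,q$, so that at each stage one solves a triangular system — shows that the difference $\phi_*(pa+qb)-\phi'_*(pa+qb)$ lies in the image of $d\oplus[\mu_2(-,\alpha)+\mu_2(\alpha,-)]$ applied to something built from the $h_n$, and this image is zero when the inputs are forced to have no constant term and $d=0$.

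I expect the main obstacle to be the bookkeeping in part (2): making precise the claim that the ambiguity coming from an $A_\infty$-homotopy disappears after pushing forward and symmetrizing. The subtlety is that homotopic $A_\infty$-maps need not induce literally equal pushforwards of a given MC element in a general $A_\infty$-algebra; what saves us is the special structure of $\Lambda$ (vanishing differential, hence trivial gauge action) together with the constraint that MC elements have zero constant term. One must be careful that the homotopy terms $h_n$, when assembled into a candidate gauge transformation, actually land in $\Lambda^0[[p,q]]$ with the right vanishing properties, and that no constant-term contributions sneak in. Once the correct statement about gauge equivalence of pushforwards under homotopic morphisms is pinned down, both parts are routine; the functoriality in part (1) is purely formal and presents no difficulty.
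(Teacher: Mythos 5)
Your part (1) is the paper's approach made fully explicit: the paper says part (1) ``follows immediately from Lemma~\ref{lemeasy},'' and you correctly identify the additional input that is being used implicitly, namely functoriality of pushforward of MC elements, $(\psi\circ\phi)_*\alpha=\psi_*(\phi_*\alpha)$, before applying the lemma twice. That is the same argument, just written out.

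Your primary route for part (2) is genuinely different from the paper's. The paper proceeds by a hands-on induction on polynomial degree in $p,q$, feeding the $A_\infty$-homotopy relation directly into the pushforward formula; since the homotopy terms $H_n(\alpha,\ldots,\alpha)$ land in $\Lambda^0[[p,q]]=\mathbb{C}[[p,q]]$ and $\mu_2(\alpha,\alpha)=0$, the recursion closes and forces $\phi_*\alpha=\phi'_*\alpha$. You instead invoke the general black box that homotopic $A_\infty$-morphisms send a fixed MC element to gauge-equivalent MC elements, and then observe that the gauge action is trivial for $\Lambda$. Both are valid. The paper's argument is self-contained and elementary; yours is more conceptual and would generalize to any target with trivial gauge action, at the cost of depending on a nontrivial general theorem about Maurer--Cartan theory. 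One small imprecision in your parenthetical: $d=0$ alone does not make the gauge action trivial — the full infinitesimal gauge transformation of $\alpha$ by $h\in\Lambda^0\otimes\mathfrak{m}$ is $dh+[\alpha,h]$, and you need both $d=0$ \emph{and} the centrality of $\Lambda^0=\mathbb{C}$ in $\Lambda$ to kill it. You state only the first. Similarly, in your ``alternative'' explicit route, the phrase ``this image is zero'' is not what one wants to say: the operator $\mu_2(-,\alpha)+\mu_2(\alpha,-)$ does not have zero image; rather, the triangular system forces the difference to vanish degree by degree, which is what the paper's induction accomplishes. Neither of these affects the correctness of your primary argument.
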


\begin{proof} The first part follows immediately from Lemma \ref{lemeasy}. For the second part, one first observes that the homotopy implies that the first order terms are the same. Then, a simple induction on degree using the formula for the pushforward we derived above finishes the proof as the homotopy relation is (schematically written):\begin{equation*}
f-g= f(\ldots)\wedge H(\ldots)-H(\ldots)\wedge g(\ldots)+\cdot\wedge H(\ldots)- H(\ldots)\wedge\cdot
\end{equation*}
\end{proof}

If $\alpha$ and $\beta$ are two $\mathcal{B}$-MC-elements, we can define a chain complex:
\begin{equation}
hom(\alpha,\beta):=(\mathcal{B}[[p,q]],D), \text{ where }D\gamma=d\gamma+ (-1)^{\abs{\gamma}}\alpha\cdot \gamma-\gamma\cdot\beta.
\end{equation}

Here is the quasi-isomorphism invariance statement that we will use. This is the one place in the main line of argument that we do not present a proof. 

\begin{lemma}\label{equalitycrit}
Let $\alpha$ and $\beta$ be two $\Lambda$-MC elements, and let $f,f':\Lambda\to \mathcal{A}_L$ be two homotopic quasi-isomorphisms. Then, $\alpha=\beta$ if and only if $H^0(hom(f_*\alpha,f'_*\beta))\neq 0$.
\end{lemma}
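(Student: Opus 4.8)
\textbf{Proof proposal for Lemma \ref{equalitycrit}.}

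The plan is to compute the cohomology of $hom(f_*\alpha,f'_*\beta)$ directly, using the fact that everything can be translated back to $\Lambda$, where the differential is trivial and the algebra is the exterior algebra on two generators. First I would reduce to the case $f=f'$: since $f$ and $f'$ are homotopic quasi-isomorphisms $\Lambda\to\mathcal{A}_L$, the induced maps on Maurer-Cartan elements satisfy $f_*\beta$ and $f'_*\beta$ are gauge equivalent in $\mathcal{A}_L$ (the gauge action of $\mathcal{A}_L^0[[p,q]]$ mentioned in Subsection \ref{ssMC}), and a gauge equivalence between $\beta'$ and $\beta''$ induces a chain isomorphism $hom(\gamma,\beta')\cong hom(\gamma,\beta'')$ for any $\gamma$. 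So it suffices to prove: for $\Lambda$-MC elements $\alpha,\beta$ and a quasi-isomorphism $f:\Lambda\to\mathcal{A}_L$, one has $\alpha=\beta$ if and only if $H^0(hom(f_*\alpha,f_*\beta))\neq 0$. Next I would use that $f$, being an $A_\infty$-quasi-isomorphism, induces a quasi-isomorphism of the $hom$ complexes: the twisted complex $hom(\alpha,\beta)$ over $\Lambda$ maps quasi-isomorphically to $hom(f_*\alpha,f_*\beta)$ over $\mathcal{A}_L$. (This is the standard statement that an $A_\infty$-functor sends the morphism complex between two MC-twisted objects to the morphism complex between their images, quasi-isomorphically when the functor is a quasi-isomorphism; here it is applied to the category of twisted complexes / $A_\infty$-modules and uses the Morita-type philosophy of Subsection 3.1.) Therefore $H^0(hom(f_*\alpha,f_*\beta))\cong H^0(hom(\alpha,\beta))$, and the problem is reduced to a purely linear-algebra computation over $\Lambda=\Lambda^*(\mathbb{C}^2)$.

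It then remains to compute $H^0(hom(\alpha,\beta))$ over $\Lambda$ directly. Write $\alpha=fa+gb$ and $\beta=f'a+g'b$ with $f,g,f',g'\in\langle p,q\rangle\subset\mathbb{C}[[p,q]]$, and recall that on $\Lambda$ the differential $d$ vanishes, so $D\gamma=(-1)^{|\gamma|}\alpha\cdot\gamma-\gamma\cdot\beta$. A degree $0$ element $\gamma$ of $\Lambda[[p,q]]$ is $\gamma=h(p,q)\cdot 1$, and $D\gamma=h\alpha-\gamma\beta\cdot(-1)^{0}\ldots$ — more precisely $D(h\cdot 1)=h(fa+gb)-h(f'a+g'b)=h(f-f')a+h(g-g')b$. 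So $H^0$ is nonzero precisely when there exists $h\in\mathbb{C}[[p,q]]$, $h\neq 0$, with $h(f-f')=0$ and $h(g-g')=0$ in the domain $\mathbb{C}[[p,q]]$. Since $\mathbb{C}[[p,q]]$ is an integral domain, this forces $f=f'$ and $g=g'$ (taking $h$ to be, say, $1$ once we know no nonzero $h$ can be annihilated unless the coefficients vanish), i.e. $\alpha=\beta$; conversely if $\alpha=\beta$ then $D$ is the zero map on degree $0$ (and in fact one should check $D$ into degree $0$ is also zero, but $\Lambda$ has no elements of degree $-1$, so $H^0=\mathbb{C}[[p,q]]\neq 0$). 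This establishes the equivalence over $\Lambda$, and combined with the two reductions above, proves the lemma.

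The main obstacle I expect is the invariance step — showing that an $A_\infty$-quasi-isomorphism $f:\Lambda\to\mathcal{A}_L$ really does induce a quasi-isomorphism $hom_\Lambda(\alpha,\beta)\to hom_{\mathcal{A}_L}(f_*\alpha,f_*\beta)$ of twisted $hom$-complexes, with the correct compatibility between the $A_\infty$-structure maps $f_n$, the MC-twisting, and the bar-type differential. This is conceptually standard (it is the functoriality of $A_\infty$-modules / twisted complexes under $A_\infty$-functors, and the reason the authors phrase the whole section through Morita theory), but writing the explicit chain map and its homotopy inverse, and checking they intertwine the twisted differentials, is the one genuinely technical point; this is presumably why the authors flag it as the single place in the main argument where a proof is not given and instead cite the general deformation-theoretic formalism. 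A secondary, milder point is the reduction of the $f\neq f'$ case to the $f=f'$ case: one must check that an $A_\infty$-homotopy between $f$ and $f'$ yields a genuine gauge equivalence (not merely a weak equivalence) between $f_*\beta$ and $f'_*\beta$, and that gauge equivalences act on $hom$-complexes by isomorphisms; both are routine but require care with signs and with the $c$-unitality hypothesis in force throughout the paper.
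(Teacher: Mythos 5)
Your proposal is correct and takes essentially the same route as the paper: an explicit base-case computation over $\Lambda$ where the differential is trivial and $\mathbb{C}[[p,q]]$ being an integral domain forces $\alpha=\beta$, followed by invariance of the twisted $hom$-complex under the $A_\infty$-quasi-isomorphism and the homotopy $f\sim f'$ (which you organize as a gauge equivalence plus functoriality, and which the paper handles wholesale by citing Section~(3m) of Seidel's book). You also correctly identify the invariance step as the one place where the paper itself declines to give a proof.
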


\begin{proof}
First note that the $\mathcal{B}=\Lambda$, $f=f'=id$ case is trivially true by explicit computation. Then we use section (3m) in \cite{Paulbook} to conclude in the general case.
\end{proof}

\begin{remark}\label{rmksegal}
Let us briefly explain how this formal automorphism story comes up in algebraic geometry. Let $Y^n$ smooth variety, and $\psi:Y\to Y$ be an automorphism which fixes a closed point $y$. By taking formal coordinates around $y$ this produces for us a formal automorphism $\mathbb{C}[[x_1,\ldots ,x_n]]\to \mathbb{C}[[x_1,\ldots ,x_n]]$, which is well-defined up to change of variables (i.e. conjugation). 

In \cite{Segalpoint}, Segal shows that this formal automorphism can also be interpreted in the following categorical way. Let $\mathcal{C}$ be the DG-ehnancement of $D^b(Coh(Y))$, which can be conceretely realized as the homotopy category of bounded perfect complexes in this case. A Koszul resolution $K$ of the skyscraper sheaf $O_y$ is an object of $\mathcal{C}$ and we can talk about its formal neighborhood in the moduli space of objects of $\mathcal{C}$. Manifestly $\psi$ acts on this formal neighborhood, and as shown in \cite{Segalpoint}, this action agrees with action from the first paragraph. The formal neighborhood is concretely given by $A_{\infty}$ (or dg) module deformations of $\mathcal{B}:=Hom_{\mathcal{C}}(K,K)$ as a module over itself. Note that $\mathcal{B}$ is also formal, and hence quasi-isomorphic to $Ext^*(O_y,O_y)=\Lambda$, as would be expected from mirror symmetry \cite{Paulcategorical}. Therefore, after choosing a quasi-isomorphism $\mathcal{B}\to\Lambda$ (akin to the choice of coordinates in the first paragraph), even more conceretely, this formal neighborhood is precisely the moduli space of MC elements that we talked about in this section.  The action can then alternatively be computed in a similar manner as above, i.e. by first transferring the action on $\mathcal{B}$ to $\Lambda$ and then computing its symmetrization map.

Obviously, it is the categorical interpretation that we use to make the mirror symmetry prediction. On the symplectic side, since we do not have an analogue of Segal's theorem, we also need to use the alternative computation technique.
\end{remark}

\subsection{The computation}\label{sscomputation}

Let us now state the problem again, with all terminology and tools introduced.

We have the homotopy commutative diagram \begin{align}
\xymatrix{
\mathcal{A}_L \ar[d]^{(\phi^{-1})^*}  &\Lambda\ar[l]^G \ar[d]^{Tr}\\
\mathcal{A}_L \ar[r]^F &\Lambda,}
\end{align} with $F$ and $G$ homotopy inverses of each other, and we want to compute $S(Tr)$. Note that only $S(Tr)$ up to change of variables is meaningful and we are free to choose $F$ and $G$ as we please.

\begin{theorem}
$F$ and $G$ can be chosen such that $S(Tr)$ is given by:
\begin{equation}
p\mapsto pe^{pq}, q\mapsto qe^{-pq},
\end{equation}
where the exponential here is defined as the usual (formal) Taylor series expansion $e^s=1+s+\frac{s^2}{2}+\ldots$.
\end{theorem}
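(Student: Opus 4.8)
The plan is to reduce the computation to an explicit calculation inside the Abouzaid model $\mathcal{A}_L$, where the Dehn twist acts by an honest pullback of differential forms and hence can be written down. The strategy has four stages.

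\emph{Stage 1: make $(\phi^{-1})^*$ explicit on $\mathcal{A}_L$.} The equatorial Dehn twist $\phi$ is supported away from the double point, so $(\phi^{-1})^*$ acts diagonally: it is $(\phi^{-1})^*$ on the $\Omega^*(S^2)$ factor and the identity on the two disc factors $\Omega^*(D)[-1]$ and $\Omega^*_{\mathrm{cpct}}(D)[1]$ (since the supports of the embeddings $\iota_n,\iota_s$ can be taken disjoint from the twisting region). So the only nontrivial feature is an automorphism of $\Omega^*(S^2)$ that is the identity near the poles. I would record a convenient smooth model for it: twisting along the equator, the induced map on $H^1$ of a collar neighborhood of the equatorial curve is a shear, and it is this shear that will feed the nontrivial part of the answer.

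\emph{Stage 2: build explicit $F:\mathcal{A}_L\to\Lambda$ and $G:\Lambda\to\mathcal{A}_L$.} Using formality I would pick a concrete pair of homotopy-inverse $A_\infty$-quasi-isomorphisms. The cohomology generators of $\mathcal{A}_L$ isomorphic to $\Lambda^*(\mathbb{C}^2)$ are: the unit, two degree-$1$ classes $a,b$ coming from the two disc factors (the "off-diagonal" Floer generators at the double point), and the top class $ab$. For $G:\Lambda\to\mathcal{A}_L$ I would send $a,b$ to explicitly chosen representative forms supported near $n$ and $s$ respectively, and then choose the higher terms $G_n$ by the standard homotopy-transfer (Kontsevich–Soibelman) formula with respect to a chosen contraction on $\mathcal{A}_L$; $F$ is a homotopy inverse. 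The whole point of the model-invariance results in Subsection \ref{ssMC} (Lemma \ref{equalitycrit} and the properties of $S$) is that \emph{any} such choice gives the same conjugacy class of $S(Tr)$, so I am free to make the most computationally pleasant choice — in particular I can arrange $F,G$ to be genuinely supported near the poles, so that $(\phi^{-1})^*$ interacts with them only through the part of the sphere factor linking the two poles.

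\emph{Stage 3: compute $Tr=F(\phi^{-1})^*G$ on MC elements, i.e. compute $S(Tr)$.} By Lemma \ref{lemeasy} it suffices to push forward the single MC element $pa+qb$ through $G$, then through $(\phi^{-1})^*$, then through $F$, and read off the coefficients of $a$ and $b$. Pushing $pa+qb$ through $G$ produces a form on $\mathcal{A}_L$ whose $\Omega^*(S^2)$-component is an infinite series in $p,q$: the $n$-th term $G_n(pa+qb,\dots,pa+qb)$ contributes iterated products $\iota_{n,*}(\dots)\,\iota_{s,*}(\dots)$ read off from the product formula of $\mathcal{A}_L$, and these land as forms concentrated along a chain of small discs marching from $n$ to $s$ (this is exactly the combinatorics that produces the power series $pq$, $(pq)^2/2!$, etc.). Then applying $(\phi^{-1})^*$ shears this sphere-factor form; then $F$ projects back, and the shear is what converts the "bare" power series into the exponential $e^{pq}$ in the $a$-coefficient and $e^{-pq}$ in the $b$-coefficient, the opposite signs reflecting that $a$ is anchored at $n$ and $b$ at $s$ and the twist acts oppositely on the two sides. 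The bookkeeping of signs (Koszul signs from the shifts, plus the $dg$-versus-$A_\infty$ sign conventions flagged in the introduction) is the fiddly part here but is mechanical.

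\emph{Stage 4: identify the series as $e^{\pm pq}$ and conclude.} After the computation one obtains $S(Tr):p\mapsto p\cdot u(pq),\ q\mapsto q\cdot v(pq)$ for some power series $u,v$ with $u(0)=v(0)=1$; symmetry of the construction under swapping the two branches forces $v(t)=u(t)^{-1}$ (equivalently $u(t)v(t)=1$), and the recursion from the homotopy-transfer coefficients $1/n!$ identifies $u(t)=e^{t}$. One then invokes Lemma \ref{equalitycrit} to certify that this is the correct answer independently of the residual choices. I expect the \textbf{main obstacle} to be Stage 3: keeping the higher $A_\infty$-terms of $F$ and $G$ under control — i.e. verifying that, for the explicit contraction chosen, the transferred structure maps really do contribute only through the single "linear chain" of discs between the poles and that all other trees cancel or vanish — so that the coefficient of $(pq)^n$ comes out to exactly $1/n!$ rather than merely "some nonzero constant". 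A clean way to sidestep the worst of this is to use the model-invariance (Lemma \ref{equalitycrit}) together with the fact that $S(Tr)$ must be conjugate to something of the form $p\mapsto p\,u(pq)$, reducing the whole problem to pinning down one power series $u$, which one can then compute either by the recursion or, as the paper notes in Remark \ref{rmkmirror}, by comparison with the mirror-symmetry prediction $(\ref{formalcluster})$ after a change of variables.
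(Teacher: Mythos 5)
Your plan misses the central technical trick of the paper's proof, and the route you actually propose (directly computing $Tr=F(\phi^{-1})^*G$ on MC elements) would run into the obstacle you yourself flag without a clean way out of it.

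The paper never computes $F$. It takes $G$ to be a hand-crafted quasi-isomorphism with \emph{only} $G_1$ and $G_2$ nonzero (built from explicit elements $x$, $y$, $\xi$ of $\mathcal{A}_L$ satisfying $d\xi=-xy-yx$, $\xi y=y\xi=0$, $\xi x+x\xi=0$), and takes $F$ to be an arbitrary unspecified homotopy inverse. The calculation then uses Lemma~\ref{equalitycrit} not as a fallback but as the engine: since $G$ and $(\phi^{-1})^*G\,Tr^{-1}$ are homotopic maps $\Lambda\to\mathcal{A}_L$, showing $Tr_*\alpha=\beta$ (for $\alpha=pa+qb$, $\beta=pe^{pq}a+qe^{-pq}b$) is equivalent to exhibiting a nonzero $H^0$ class in $hom(G_*\beta,((\phi^{-1})^*G)_*\alpha)$. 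Both sides of that $hom$ are cheap to write down precisely because $G_n=0$ for $n\ge 3$ — one gets $pe^{pq}x+qe^{-pq}y+pq\xi$ and $px+qy+pq\xi'$ where $\xi'=(\phi^{-1})^*\xi$ — and the witness cocycle is $e^{pq\rho}$ for a suitable $0$-form $\rho$ with $d\tilde\rho=(\phi^{-1})^*\tilde\xi-\tilde\xi$ and $\tilde\rho(n)-\tilde\rho(s)=1$. The factorials in the answer thus do not come from a combinatorial count of ``a chain of discs marching from $n$ to $s$'' inside the pushforward $G_*$ (with the paper's $G$ there is no such chain — higher $G_n$ vanish); they come from the exponential cocycle $e^{pq\rho}$ that closes the $hom$ complex. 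So the heuristic in your Stage~3 for where $e^{\pm pq}$ originates is not right, and your Stage~2 choice of $G$ via a generic Kontsevich--Soibelman contraction would hand you an infinite tail of $G_n$'s that you would then have to control through $F$ as well.

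What you should extract from this: Lemma~\ref{equalitycrit} converts the computation of $Tr_*\alpha$ into a \emph{guess-and-verify} problem. Once you guess $\beta=pe^{pq}a+qe^{-pq}b$, you only need to push forward through $G$ and $(\phi^{-1})^*G$ — both explicit — and find one degree-$0$ cocycle in the twisted $hom$ complex; $F$ disappears entirely. Your observation that $(\phi^{-1})^*$ acts only on the $\Omega^*(S^2)$ factor is correct and is exactly what makes $\rho$ (a $0$-form on the sphere) the natural place to look for the witness. Also, your ``symmetry forces $v=u^{-1}$'' heuristic is not used in the proof and would need a real argument; in the paper the relation is simply read off from the explicit answer.
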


\begin{proof}
\textbf{Step 1:} In order to choose $G$ let us describe some elements of $\mathcal{A}_L$:\begin{itemize}
\item $1:=(1,0,0)$
\item $x:=(0,1,0)$
\item Choose one $\eta\in\Omega^2_{cpct}(D)$ such that $\int_D\eta =1$ and let $y=(0,0,\eta)$.
\item Choose one $\tilde{\xi}\in\Omega^1(S)$ such that $d\tilde{\xi}=\iota_{n,*}\eta+\iota_{s,*}\eta$ and $A^*\tilde{\xi}=\tilde{\xi}$. Let $\xi=(\tilde{\xi},0,0)$. 
\end{itemize}

Note the important relationships:\begin{itemize}
\item $d\xi=-xy-yx.$
\item $\xi y= y\xi =0$
\item $\xi x+x\xi=0$
\end{itemize}

Now we define $G$ by $G_1(1)=1$, $G_1(a)=x$, $G_1(b)=y$, $G_1(a\wedge b)=-yx$, $G_2(a,b)=\xi$, $G_2(a,a\wedge b)=x\xi$. We declare that all the other ways of inputting the basis elements $1,a,b,a\wedge b$ into $G_n$'s give zero, and we extend by linearity. It's tedious but easy to check that this is an $A_{\infty}$-quasi-isomorphism, keeping in mind the sign conventions of changing from a dga to an $A_{\infty}$-algebra.

We define $F$ to be any homotopy inverse of $G$.

\textbf{Step 2:} Let $\alpha$ be the MC element $pa+qb$, and $\beta:=pe^{pq}a+qe^{-pq}b$. We want to show that $Tr_*\alpha=\beta$. We will use Lemma \ref{equalitycrit}. Clearly, $G:\Lambda\to \mathcal{A}_L$ and $(\phi^{-1})^*GTr^{-1}:\Lambda\to \mathcal{A}_L$ are homotopic. Hence it suffices to show that \begin{equation}\label{eqnhtpy}
H^0(hom(G_*\beta,((\phi^{-1})^*G)_*\alpha)\neq 0
\end{equation}

Before we do that we need to describe a couple more elements of $\mathcal{A}_L$.

\begin{itemize}
\item $\xi':=(\phi^{-1})^*\xi$
\item Using Stokes theorem two times, we see that if $\tilde{\rho}\in\Omega^0(S)$ is such that $d\tilde{\rho}= (\phi^{-1})^*\tilde{\xi}-\tilde{\xi}$, then $\tilde{\rho}(n)-\tilde{\rho}(s)=1$. We choose one such that $\tilde{\rho}(n)=1,$ and $\tilde{\rho}(s)=0$. Let $\rho=(\tilde{\rho},0,0).$
\end{itemize}

Note the relations:
\begin{itemize}
\item $\rho x=x, x\rho=0$
\item $y\rho=y, \rho y=0$
\end{itemize}

Let us now describe the two MC elements that appear in (\ref{eqnhtpy}). In fact it is straightforward to see that:
\begin{equation*}
G_*\beta=pe^{pq}x+qe^{-pq}y+pq\xi
\end{equation*}
\begin{equation*}
((\phi^{-1})^*G)_*\alpha=px+qy+pq\xi'
\end{equation*}

\textbf{Step 3:} We define the degree $0$ element: \begin{equation} \label{cochain}
e^{pq\rho}:=\sum \frac{(pq)^n\rho^n}{n!}=(\sum \frac{(pq)^n\tilde{\rho}^n}{n!},0,0)\in hom(G_*\beta,((\phi^{-1})^*G)_*\alpha).
\end{equation}

Obviously $e^{pq\rho}$ is not a coboundary, hence it is enough to show that it's a cocycle:
 \begin{align*}
De^{pq\rho}&=de^{pq\rho}+ (pe^{pq}x+qe^{-pq}y+pq\xi)e^{pq\rho}-e^{pq\rho}(px+qy+pq\xi') \\&=pqe^{pq\rho}d\rho +(pe^{pq}x+qy+pqe^{pq\rho}\xi)-(pe^{pq}x+qy+pqe^{pq\rho}\xi')\\&=pqe^{pq\rho}d\rho+pqe^{pq\rho}(\xi-\xi')=0.
\end{align*}

Therefore, we finished the proof of (\ref{eqnhtpy}). This implies by Lemma \ref{equalitycrit} that $\beta=Tr_*\alpha$, which by definition translates to the statement of the theorem.
\end{proof}

\begin{remark}
The choice of the cochain in Step 3 might look mysterious. Among the elements that we have defined so far really the only candidates are of the form $\sum_{i\geq 0} a_i(p,q)\rho^i$. If one goes backwards in the presented proof, (\ref{cochain}) comes out easily as the unique choice.
\end{remark}

\begin{remark}
If one is interested only in proving that $\Phi_*$, or equivalently $Tr:\Lambda\to\Lambda$, is not homotopic to identity (as in Theorem \ref{thm2}), there is an alternative proof, which uses more high-brow conceptual reasoning but is computationally more straightforward.

First of all, for this proof, one needs explicit formulas for both $F$ and $G$, which requires the use of homotopy transfer formulas from \cite{Marklformulas}. For an explicit choice of starting data, i.e a ``Hodge decomposition" in the sense of \cite{Marklformulas} , one can compute that $Tr_2=0$ for all inputs, and $Tr_3\neq 0$ (in addition to the a priori known $Tr_1=id$). This means that $Tr$ is the exponential of a Hochschild cocycle $\eta$ (as in the Lecture 8 of \cite{Paulcategorical}) with the first nontrivial part $\eta_3$ which is equal to $Tr_3$. If $Tr$ were to be homotopic to identity then $\eta$ would have to be a Hochschild coboundary, which in particular means that $Tr_3$ represents the trivial Hochschild cohomology class. Using Hochschild-Kostant-Rosenberg and also the explicit description of $Tr_3$ it is easy to find the cohomology class of $Tr_3$ (essentially given by symmetrization) and see that it is non-zero. This corresponds to the (conjugacy invariant!) fact that the first non-trivial coefficient in the formal diffeomorphism we computed is in degree 3, though it is tricky to show rigorously this link.
\end{remark}

\begin{remark}\label{rmkmirror}
In order to tie this back to the mirror symmetry prediction we have to show that $p\mapsto pe^{pq}, q\mapsto qe^{-pq}$ and $p\mapsto p(1-pq), q\mapsto qe(1-pq+p^2q^2\ldots)$ are conjugate to each other. The easiest way to see this is to observe that these maps are the time-$1$ map of the (formal) Hamiltonian vector fields of the Hamiltonians  $-(pq)^2/2$ and $-\frac{1}{1\cdot 2}(pq)^2+\frac{1}{2\cdot 3}(pq)^3-\frac{1}{3\cdot 4}(pq)^4+\ldots$ respectively. Hence any change of variables that sends $-(pq)^2/2\to  -\frac{1}{1\cdot 2}(pq)^2+\frac{1}{2\cdot 3}(pq)^3-\frac{1}{3\cdot 4}(pq)^4+\ldots$ would do the job, and it is easy to see that this exists. 
\end{remark}

\begin{remark} \label{rmklast}
This analysis can also be made in the case of a negatively self-intersecting nodal sphere. This will appear in a future paper. We just note that the action on the corresponding deformation space is trivial, even though one can in fact detect some non-triviality by looking at a nearby Klein bottle with a local system that has $-1$ monodromy along the equator. This suggests putting some extra data (analogous to the universal local systems in the case of embedded Lagrangians and Family Floer theory) for the nodal sphere to capture all the nearby information.
\end{remark}

\section{Appendix}

The aim of this appendix is to sketch a proof of Theorem \ref{thmimm}. We start with two easy algebraic lemmas about $A_{\infty}$-groupoids with transitive strict $\mathbb{Z}$-actions, and the induced automorphisms of their models. Recall that these notions were defined in Subsection 3.2.

Let $\mathcal{C}$ and $\mathcal{D}$ be two $A_{\infty}$-groupoids with transitive strict $\mathbb{Z}$-actions, and let $C$ and $D$ be a choice of models. Hence we have automorphisms $C\to C$ and $D\to D$, both well defined up to homotopy conjugation. The following follows from the naturality of the constructions before Theorem 2.9 in \cite{Paulbook}

\begin{lemma}\label{lemcom}
If we have a strictly $\mathbb{Z}$-equivariant quasi-equivalence $\mathcal{C}\to\mathcal{D}$, then there is a quasi-isomorphism $C\to D$ such that the diagram 
\begin{align*}
\xymatrix{
C \ar[d] \ar[r] &D\ar[d]\\
C \ar[r] &D}
\end{align*} 
commutes up to homotopy.
\end{lemma}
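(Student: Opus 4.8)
The plan is to trace through the construction that precedes Theorem 2.9 in \cite{Paulbook}, keeping track of the $\mathbb{Z}$-actions, and check that every step in that construction is natural with respect to strictly equivariant quasi-equivalences. Recall the setup: from an $A_\infty$-groupoid $\mathcal{C}$ with a transitive strict $\mathbb{Z}$-action and a chosen object $C_0$, one produces an $A_\infty$-algebra $\mathcal{C}_{\mathrm{ind}}$ (the induced/telescope-type algebra built from the morphism spaces $\mathrm{hom}_{\mathcal{C}}(C_0, C_n)$ together with the generator of the $\mathbb{Z}$-action), and then one shows $\mathcal{C}_{\mathrm{ind}}$ is quasi-isomorphic to $\mathrm{End}_{\mathcal{C}}(C_0)$ and carries an $A_\infty$-automorphism --- the one that is well-defined up to homotopy conjugation. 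A model $C$ for $\mathcal{C}$ is by definition any $A_\infty$-algebra quasi-isomorphic to $\mathrm{End}_{\mathcal{C}}(C_0)$, so the induced automorphism transports (up to homotopy conjugacy) to $C$ by the lemmas of Subsection \ref{sstransfer}.

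First I would reduce to the case where $C = \mathrm{End}_{\mathcal C}(C_0)$ and $D = \mathrm{End}_{\mathcal D}(D_0)$ on the nose. This is harmless: for general models $C, D$ one picks quasi-isomorphisms $C \to \mathrm{End}_{\mathcal C}(C_0)$ and $D \to \mathrm{End}_{\mathcal D}(D_0)$, and then pastes the resulting homotopy-commutative squares onto the one produced for the endomorphism algebras, using the composition-preserves-homotopy facts already recorded in Subsection \ref{sstransfer} (the same bookkeeping used in the proof of the transfer lemma there). Second, given a strictly $\mathbb{Z}$-equivariant quasi-equivalence $\mathcal{F}: \mathcal{C} \to \mathcal{D}$, transitivity lets me assume $\mathcal{F}(C_0) = D_0$ (otherwise compose with a power of the $\mathbb{Z}$-action on $\mathcal{D}$, which is strictly equivariant). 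Then $\mathcal{F}$ restricts to an $A_\infty$-quasi-isomorphism $\mathrm{End}_{\mathcal C}(C_0) \to \mathrm{End}_{\mathcal D}(D_0)$, and strict equivariance says precisely that this intertwines the two strict $\mathbb{Z}$-actions on the telescope algebras on the nose, not just up to homotopy. Third, I would observe that the construction of the induced automorphism before Theorem 2.9 of \cite{Paulbook} is functorial in strictly equivariant $A_\infty$-functors: applying the construction to $\mathcal F$ yields a strictly commuting square at the chain level
\begin{align*}
\xymatrix{
\mathrm{End}_{\mathcal C}(C_0) \ar[d] \ar[r] & \mathrm{End}_{\mathcal D}(D_0) \ar[d]\\
\mathrm{End}_{\mathcal C}(C_0) \ar[r] & \mathrm{End}_{\mathcal D}(D_0)}
\end{align*}
whose vertical arrows are particular representatives of the two automorphism classes. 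Since the statement of the lemma only asks for homotopy commutativity, strict commutativity of a suitable representative is more than enough.

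The main obstacle I expect is the second reduction step, or rather making precise what ``the automorphism well-defined up to homotopy conjugation'' means at the chain level when comparing two different groupoids: the telescope construction involves auxiliary choices (strip-like ends, perturbation data in the geometric incarnation; in the algebraic incarnation, choices of homotopy inverses and contracting homotopies), and one must check that a strictly equivariant functor can be arranged to respect --- or can be modified up to homotopy to respect --- these choices, so that the square above really does relate the correct homotopy conjugacy classes rather than some unrelated automorphisms. The cleanest way around this is to phrase everything in terms of $\mathbb{Z}$-equivariant $A_\infty$-modules (pull back the diagonal bimodule, as in Subsection 3.1 and Section 10b of \cite{Paulbook}) and invoke that a strictly equivariant quasi-equivalence induces a quasi-equivalence of the corresponding equivariant module categories; then the automorphism is literally ``act on the diagonal module'' and its naturality under $\mathcal F$ is immediate. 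Either way, the argument is bookkeeping on top of \cite{Paulbook} rather than new input, which is why the lemma is stated as a consequence of ``the naturality of the constructions'' there.
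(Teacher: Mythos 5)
Your proposal is essentially the same approach as the paper, which simply invokes naturality of the constructions before Theorem 2.9 in \cite{Paulbook}; you are filling in what that naturality means, and the overall plan (reduce to endomorphism algebras via the lemmas of Subsection \ref{sstransfer}, normalize $\mathcal F(C_0)=D_0$, then argue functoriality of the induced-automorphism construction) is correct. One caution: the intermediate claim that you get a \emph{strictly} commuting square at the chain level is too strong as stated. The $\mathbb{Z}$-action functors $T_{\mathcal C}$, $T_{\mathcal D}$ are strict and intertwined strictly by $\mathcal F$, so the square involving $\mathrm{End}(C_0)\to\mathrm{End}(C_1)$ and $\mathrm{End}(D_0)\to\mathrm{End}(D_1)$ does commute on the nose; but the induced automorphism is that map followed by a comparison quasi-isomorphism $\mathrm{End}(C_1)\to\mathrm{End}(C_0)$ coming from a choice of invertible element $c\in\mathrm{hom}(C_0,C_1)$, and this second leg is only compatible with $\mathcal F$ up to homotopy, even if one chooses $d=\mathcal F(c)$ on the $\mathcal D$-side, because $\mathcal F$ need not be a strict $A_\infty$-functor. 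You correctly flag this and note that homotopy commutativity is all that is required, and your module-theoretic workaround would close the gap; just be careful not to assert strictness of the final square.
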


Let $B$ be an $A_{\infty}$-algebra with a given $A_{\infty}$-automorphism, $f:B\to B$. We construct an $A_{\infty}$-category $\mathcal{B}$ which has $\mathbb{Z}$ many objects, where all morphism spaces are equal to $B$ and structure maps are the same with the ones of $B$. $\mathcal{B}$ has a tautological transitive strict $\mathbb{Z}$ action, which induces $g: B\to B$ - as usual well-defined up to homotopy conjugation. The following is a tautology.

\begin{lemma}\label{lemtriv}
$f$ and $g$ are homotopy conjugate.
\end{lemma}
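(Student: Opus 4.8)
The plan is that there is essentially nothing to prove beyond unwinding definitions: the category $\mathcal{B}$ is built precisely so that feeding it into the model-extraction procedure of Lemma \ref{lemmaaction} returns $f$. Since the induced automorphism $g$ is, by that lemma, only defined up to homotopy conjugation, it suffices to exhibit \emph{one} admissible execution of the extraction whose output is $f$ (or anything homotopic to it); the general uniqueness statement — coming from Theorem 2.9 of \cite{Paulbook} together with the basic lemmas of Subsection \ref{sstransfer} — then forces every legitimate output to be homotopy conjugate to $f$, which is the assertion.

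Concretely, I would first make the tautological strict $\mathbb{Z}$-action on $\mathcal{B}$ explicit: its generator $T$ is the shift $B_n\mapsto B_{n+1}$ on objects, and on the morphism complexes $\hom_{\mathcal{B}}(B_m,B_n)=B$ it is the identity away from the distinguished edge and records $f$ across it, in the standard fundamental-domain fashion; because $f$ is a strict $A_\infty$-morphism this $T$ is a genuine strict $A_\infty$-functor, and it is invertible because $f$ is. One then checks the three axioms for free: the orbit of $B_0$ is all of $\mathcal{B}$ (transitivity), the action is strict by construction, and consecutive objects are quasi-isomorphic via the tautological unit cocycle $e_B\in B=\hom_{\mathcal{B}}(B_0,B_1)$, so $\mathcal{B}$ is an $A_\infty$-groupoid with transitive strict $\mathbb{Z}$-action. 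Now take $B_0$ as the base object, so the chosen algebra model is $\hom_{\mathcal{B}}(B_0,B_0)=B$ with its original structure maps; following the recipe of Lemma \ref{lemmaaction}, the induced automorphism is the composite of the strict isomorphism $T\colon\hom_{\mathcal{B}}(B_0,B_0)\to\hom_{\mathcal{B}}(B_1,B_1)$ — which by construction is $f$ — with the identification $\hom_{\mathcal{B}}(B_1,B_1)\cong B$ given by conjugation with $e_B$, which is homotopic to the identity. Hence this execution yields $g$ homotopic to $f$, and a fortiori $g$ and $f$ are homotopy conjugate.

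The one delicate point, and the step I expect to be the main obstacle, is matching these hands-on choices with the abstract output of the machinery of \cite{Paulbook}: one must verify that ``base object $B_0$, connecting cocycle $e_B$'' really is an admissible instance of the extraction, so that the object computed above is a bona fide representative of the homotopy-conjugacy class defining $g$ (the same careful bookkeeping is what makes $T$ strict in the first place). The remaining issues are cosmetic: convention-tracking for whether one lands on $f$ versus $f^{-1}$, which is irrelevant since homotopy conjugation is an equivalence relation and one may replace the generator by its inverse; and, if one wants the statement for $f$ that is not literally strict, a preliminary reduction to a quasi-isomorphic model on which it becomes strict — though in the application of this Appendix $f=(\phi^{-1})^{*}$ is an honest dga automorphism of $\mathcal{A}_L$ and hence already strict. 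With this in place the lemma is, as claimed, a tautology.
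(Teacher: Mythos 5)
Your proposal is correct and matches the paper's approach: the paper offers no argument beyond asserting the lemma is a tautology, and you unwind exactly that tautology by taking $B_0$ as the base object, observing that $T$ restricted to $\hom_{\mathcal{B}}(B_0,B_0)$ is $f$, and noting that the re-identification of $\hom_{\mathcal{B}}(B_1,B_1)$ with the base model via the unit cocycle $e_B$ is homotopic to the identity, so the extracted $g$ is homotopic to $f$ and \emph{a fortiori} homotopy conjugate to it. The one imprecision is your description of the generator $T$ on morphism complexes as ``the identity away from the distinguished edge and records $f$ across it''; as phrased this does not define a functor, and what is meant is simply that $T$ applies $f$ to \emph{every} morphism complex $\hom_{\mathcal{B}}(B_m,B_n)=B$, which is a strict $A_\infty$-functor precisely because $f$ is strict, as you correctly flag. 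This slip is harmless, since the downstream computation only uses the correct identification $T\vert_{\hom_{\mathcal{B}}(B_0,B_0)}=f$.
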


We first prove the naturality statement for embedded Lagrangians. Namely, let $K$ be an embedded Lagrangian inside a geometrically bounded symplectic manifold $X$ such that $\pi_2(X,K)=0$. Let $\psi :K\to K$ be a diffeomorphism and assume that $\Psi:X\to X$ is a symplectomorphism extending $\psi$.

As in Lemma \ref{lemmaaction}, if we take any model $\mathcal{A}$ for the Floer algebra of $K$, we get a well-defined (up to homotopy conjugation) action $\Psi_*:\mathcal{A}\to\mathcal{A}$.

\begin{theorem}\label{thmlast} There exists an $A_{\infty}$-equivalence $\mathcal{A}\to\Omega_{dR}(K)$ such that 
\begin{align*}
\xymatrix{
\mathcal{A} \ar[d]^{\Psi_*}\ar[r] &\Omega_{dR}(K) \ar[d]^{(\psi^{-1})^*} \\
\mathcal{A}\ar[r] &\Omega_{dR}(K) }
\end{align*} 
commutes up to homotopy.
\end{theorem}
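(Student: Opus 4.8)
The plan is to build the PSS-type equivalence $\mathcal{A} \to \Omega_{dR}(K)$ not as a single map but as a strictly equivariant comparison of two $A_\infty$-groupoids with transitive strict $\mathbb{Z}$-actions, so that Lemma \ref{lemcom} produces the homotopy-commuting square for free. Concretely, on the Floer side we have already constructed in Subsection \ref{ssact} an $A_\infty$-category $\mathcal{K}$ with objects $K_n$, $n \in \mathbb{Z}$, carrying a strict $\mathbb{Z}$-action induced by $\Psi$, with $\mathcal{A} \simeq \mathrm{end}_{\mathcal{K}}(K_0)$ an algebra model. On the topological side I would construct a parallel category $\mathcal{T}$ with objects $T_n$, all equal geometrically to the zero section $K \subset T^*K$ (equivalently to $K$ with its de Rham algebra), where the morphism spaces and higher structure maps come from the Morse/de Rham model of \cite{Abouzaidtopological}, but set up with the same combinatorial choices (strip-like ends, perturbation data on pointed discs with integer labels) so that $\psi$, acting by pullback $(\psi^{-1})^*$ on the zero-section data, induces a \emph{strict} $\mathbb{Z}$-action on $\mathcal{T}$. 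Then $\Omega_{dR}(K) \simeq \mathrm{end}_{\mathcal{T}}(T_0)$ is an algebra model, and by Lemma \ref{lemtriv} the induced automorphism of $\Omega_{dR}(K)$ is homotopy conjugate to $(\psi^{-1})^*$.

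The second step is to produce a strictly $\mathbb{Z}$-equivariant quasi-equivalence $\mathcal{K} \to \mathcal{T}$ (or the other direction). This is where the actual PSS moduli problem enters: one sets up, for each string of labels $0 \le i_1 < \cdots < i_d$, the mixed moduli spaces interpolating between pseudoholomorphic discs with boundary on the $K_{i_j}$ and the Morse-type trees computing $\Omega_{dR}(K)$, exactly as in \cite{Abouzaidtopological} and Section 10 of \cite{Paulbook}, but — and this is the crucial point — choosing all the auxiliary data inductively so that the data attached to a configuration with labels $i_1, \dots, i_d$ is the pushforward under $\Psi^{i_1}$ (resp. $(\psi^{-1})^{*\, i_1}$) of the data for labels $0, i_2 - i_1, \dots, i_d - i_1$. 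Since the Floer side was already built with this equivariance and the de Rham side will be built the same way, the comparison functor is automatically strict and $\mathbb{Z}$-equivariant: its only nonzero components are the first-order terms, given by counts of the interpolating configurations with the $0$-label to the left of the output, and the higher terms vanish by the same label-ordering argument used in Subsection \ref{ssact}. One then invokes the usual gluing/compactness package to see this functor is a quasi-equivalence (it is a quasi-isomorphism on $\mathrm{hom}(K_0, K_0)$, hence on all $\mathrm{hom}(K_i, K_j)$ by equivariance).

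Applying Lemma \ref{lemcom} to this strictly equivariant quasi-equivalence $\mathcal{K} \to \mathcal{T}$ gives a quasi-isomorphism $\mathcal{A} \to \Omega_{dR}(K)$ fitting into a square that commutes up to homotopy, where the vertical arrows are the induced automorphisms; and by Lemmas \ref{lemtriv} and \ref{lemmaaction} these induced automorphisms are homotopy conjugate to $\Psi_*$ and $(\psi^{-1})^*$ respectively. A short diagram-chase using the basic lemmas of Subsection \ref{sstransfer} (preservation of homotopy commutativity under pre- and post-composition with homotopy inverses) upgrades "homotopy conjugate" to an honest homotopy-commuting square with the stated arrows, which is Theorem \ref{thmlast}. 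Finally, Theorem \ref{thmimm} follows by running the identical argument with $K$ replaced by the Lagrangian nodal sphere $L$: because an equatorial Dehn twist is the identity near the double point, the Abouzaid model $\mathcal{A}_L$ and its category of twisted copies behave exactly like the embedded case, and $(\phi^{-1})^*$ acts by the evident pullback on $\mathcal{A}_L$.

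The main obstacle I expect is verifying that the PSS moduli problem can genuinely be set up with the strict $\mathbb{Z}$-equivariance intact — i.e. that the inductive choice of perturbation data (consistency with boundary strata, compatibility with gluing, transversality) can be made compatibly with pushing forward by powers of $\Psi$, and simultaneously on both the holomorphic and the Morse-theoretic sides. The pushforward of a regular choice under a symplectomorphism is again regular, so the real work is organizing the induction so that the equivariant extension never conflicts with the consistency conditions already imposed on the $0$-labelled strata; this is precisely the content of Section 10b of \cite{Paulbook} transported to the PSS setting, and I would lean on that framework rather than redo it. A secondary technical point, already flagged in the Technical Details subsection, is that one must run all of this without the exactness hypothesis of \cite{Abouzaidtopological}, which requires replacing the integrated maximum principle by a monotonicity-lemma argument to guarantee that the Fukaya-categorical computation localizes to a Weinstein neighborhood; for the purposes of this appendix I would simply cite that this is standard, or invoke the exactness reduction mentioned in the introduction.
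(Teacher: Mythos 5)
Your proposal follows essentially the same route as the paper: realize both sides as $A_\infty$-groupoids with transitive strict $\mathbb{Z}$-actions, build the PSS-type comparison as a strictly $\mathbb{Z}$-equivariant functor by choosing perturbation data first for the leftmost-label-$0$ configurations and transporting by powers of $\Psi$, and then conclude with Lemmas \ref{lemcom} and \ref{lemtriv}. Two points, however, deserve correction.

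First, your claim that the comparison functor is \emph{strict} --- ``its only nonzero components are the first-order terms, and the higher terms vanish by the same label-ordering argument used in Subsection \ref{ssact}'' --- is wrong. The strictness in Subsection \ref{ssact} concerns the $\mathbb{Z}$-action itself, which is geometric relabeling by a symplectomorphism; the PSS-type functor is a genuine $A_\infty$-functor whose higher components count interpolating configurations with $d\geq 2$ inputs, and these do not vanish (nor should they: they are needed for compatibility with the $A_\infty$-structures). Fortunately nothing in your argument uses strictness of the functor; Lemma \ref{lemcom} only requires strict $\mathbb{Z}$-\emph{equivariance}, which your inductive choice of data does provide. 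Delete the strictness claim and the argument stands. Second, your single category $\mathcal{T}$ built from ``the Morse/de Rham model'' compresses a real issue: there is no direct moduli-theoretic comparison from the Floer category to differential forms. The paper instead runs a chain Floer $\to$ Morse $\to$ Simplicial $\to$ Singular $\to$ de Rham, preparing an equivariant category at each stage (the first three via perturbation data as you describe, the last two tautologically as in Lemma \ref{lemtriv}), and composes the resulting homotopy-commutative squares, noting that some arrows in the chain point against the direction of the more natural functor. Your plan needs this unpacking to be complete, but the underlying strategy is the correct one.
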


\begin{proof} This goes by relating a chain of different models. More precisely, our road map will be Floer $\to$ Morse $\to$ Simplicial $\to$ Singular $\to$ DeRham. For each of these we prepare $A_{\infty}$-categories with transitive strict $\mathbb{Z}$-actions. This is done in a very similar manner to Section \ref{ssact} for the first three, and by the tautology explained right before Lemma \ref{lemtriv} for the last two. Recall that $\Psi_*$ was constructed using the Floer version of these categories. Then, we define strictly $\mathbb{Z}$-equivariant functors by again the same strategy, along with the constructions that are classical by now. For example, for Floer $\to$ Morse, we consider the usual PSS moduli spaces, but where we use Lagrangian labels for disk moduli spaces and labelings of the regions of the plane for gradient flow ones by the integers, encoding which Floer data, Morse functions, perturbation data etc are to be used. Again, we choose the required perturbation data inductively over the number of inputs, where we first deal with the case where the leftmost label is $0$ and then transfer those data to the other labelings by the symplectomorphism. After all these functors are constructed, Lemma \ref{lemcom} and \ref{lemtriv} gives the desired result. Note that the direction of the arrow in the road map may not match with the direction of the more natural functor as in Singular $\to$ DeRham.
\end{proof}

In \cite{Abouzaidtopological}, which mainly dealt with two transversely (or cleanly) intersecting embedded Lagrangians, the case of an immersed Lagrangian with transverse double points was only mentioned in the introduction. The generalization is straightforward, but there are two points that needs explanation. 

For the Morse model introduced in \cite{Abouzaidtopological}, to define the $A_{\infty}$-structure maps, labeled planar trees were used. Labeling meant assigning $0$ or $1$ to all the connected components of the complement of the planar tree. This then was used to tell the edges of the tree which gradient flow it should follow. The labeling of regions in this case is completely determined from where the inputs of the structure maps come. It might appear that in our case  there is not enough data to do the labeling. This is not true, because we definitely have the data to label which input edges make switches from $0$ to $1$, or $1$ to $0$. This determines uniquely a $0$ or $1$ labeling of all the regions, except when there is no switch in the inputs, in which case a labeling would be extraneous anyways. 

In order to use the analysis of \cite{Abouzaidtopological} for mushroom maps directly, we make a small modification to the construction of the Floer algebra of an immersed Lagrangian with transverse self intersections, assuming asphericity as usual. Let us describe this for the Lagrangian nodal sphere $L$ (the generalization will be clear). In the usual construction, in order to define $CF(L,L)$, one first chooses a Hamiltonion isotopy $f_t$ of $M$, making $L$ and $f_1(L)$ transverse, and then declares that the time 1 Hamiltonian chords of $f_t$ starting and ending at $L$ are the generators of the vector space. Assume that $f_t(L)$ stays sufficiently $C^1$ close to $L$.

The four chords are of two kinds: the two that come from the double point, which we call strange chords, and the other two that we call true chords (the ones that would persist if the isotopy were to be pulled back to $T^*S$). Note that if $f$ is small enough, there are no pseudo-holomorphic strips that involve the strange chords \cite{Akahoimmersed}. What we want to do is to replace the strange generators with two generators both corresponding to the actual double point, but labeled with the two possible branch jumps. This defines a new vector space (which is canonically identified with the old one, as the strange chords also have the property of doing branch jumps in different directions), with the same differential. Then, when we define higher structure maps, if one of these new generators occur in one of the marked points, we use zero Hamiltonians in the corresponding strip like ends (in particular the curve converges to the double point at that strip like end), and moreover, we require that the lift of the boundary map to $S^2$ changes branches in the direction labeled by the new generator (something that happened automatically for the corresponding strange chord). 

One can either take this as the definition of $CF(L,L)$, or, seeing the double point generators as a limit of strange generators of a carefully chosen one parameter family of Hamiltonian perturbations, show that this new $A_{\infty}$-algebra is quasi-isomorphic to the usual $CF(L,L)$, using parametrized moduli spaces. But, if we choose the second option, we also have to show that the quasi-isomorphism respects the action of $\Phi$. This requires us to again go back and define (or remember that we already did that) the actions through $A_{\infty}$-groupoids with transitive strict $\mathbb{Z}$-actions, construct strictly $\mathbb{Z}$-equivariant functors using parametrized moduli spaces, and replace the quasi-isomorphism with the non-explicit one given by algebra.

When all of this is done, then the argument in \cite{Abouzaidtopological} needs no modification. We choose the Floer data and Morse functions in our categories to satisfy the consistency condition (5.11) from \cite{Abouzaidtopological}. The paragraph about the Morse model applies to Lagrangian labelings (of pseudo-holomorphic discs and mushroom maps) as well, because we know which generator coming from the double point should be considered a $0$ to $1$ switch (and vice versa). As explained right after (5.11), this lets us make the required $\mathbb{Z}_2$ labeling of the stem and the cap of a mushroom consistently.

Note that in addition to this $\mathbb{Z}_2$-labeling, which is pretty crucial in obtaining compactness of moduli spaces (see Lemmas 2.5 and 5.18 in \cite{Abouzaidtopological}), we will also have an extra, harmless, $\mathbb{Z}$ labeling in what follows, completely analogous to the embedded case considered in the theorem above. These two labelings are totally independent of each other.

\begin{proof}[Proof of Theorem \ref{thmimm}]We follow exactly the same strategy with the proof of Theorem \ref{thmlast}. Using the first description of the Simplicial model in \cite{Abouzaidtopological}, it is easy to construct a Singular model. With a little bit more work in  Steps 1 and 2 of the proof of Theorem \ref{thm1}, we can show that $\Phi$ can be Hamiltonian isotoped so that it looks like the standard symplectic lift of the equatorial Dehn twist near all of $L$, not just near the double point. After this modification, Floer $\to$ Morse step is exactly the same, where we replace the PSS construction with the more complicated construction of \cite{Abouzaidtopological} of course. Morse $\to$ Simplicial is also a little bit tricky, but again this was done in \cite{Abouzaidtopological}, and we only need to add the $\mathbb{Z}$-labelings. The rest of the functors are constructed without any major additional complications to their embedded counterparts.
\end{proof}

\bibliographystyle{plain}

\bibliography{Nodalspherebib}

\end{document}